\newtheorem{theorem}{\indent Theorem}[section]
\newtheorem{proposition}[theorem]{\indent Proposition}
\newtheorem{definition}[theorem]{\indent Definition}
\newtheorem{lemma}[theorem]{\indent Lemma}
\newtheorem{remark}[theorem]{\indent Remark}%SetFonts
\begin{document}
\title{Hierarchical exact controllability of the fourth order parabolic equations}
\author{
Bo You\footnote{Email address: youb2013@xjtu.edu.cn}
 \\
{\small School of Mathematics and Statistics, Xi'an Jiaotong University} \\
\small Xi'an, 710049, P. R. China\\
Fang Li\footnote{Email address: fli@xidian.edu.cn}
	 \\
	{\small School of Mathematics and Statistics, Xidian University} \\
	\small Xi'an, 710071, P. R. China}

%\date{October 1, 2022}	
					
\maketitle

\begin{center}
\begin{abstract}
This paper is concerned with the application of Stackelberg-Nash strategies to control fourth order linear and semi-linear parabolic equations. We assume that the system is acted through a hierarchy of distributed controls: one main control (the leader) that is responsible for an exact controllability property; and a couple of secondary controls (the followers) that minimize two prescribed cost functionals and provides a pair of Nash equilibria for the two prescribed cost functionals.  In this paper, we first prove the existence of an associated Nash equilibrium pair corresponding to a hierarchical bi-objective optimal control problem for each leader by Banach fixed points theorem. Then, we establish an observability inequalities of fourth order coupled parabolic equations by global Carleman inequalities and energy methods. Based on such results, we obtain the existence of a leader that drives the controlled system exactly to a prescribed (but arbitrary) trajectory. Furthermore, we also give the second-order sufficient conditions of optimality for the secondary controls.

\textbf{Keywords}: Nash equilibria, exact controllability, Stackelberg-Nash strategy, Carleman inequalities, fourth order parabolic equations.

\textbf{Mathematics Subject Classification (2010)} : 35Q93;49J20; 49K20; 93A13; 93B05; 93C20.
\end{abstract}
\end{center}

\section{Introduction}
\def\theequation{1.\arabic{equation}}\makeatother
\setcounter{equation}{0}
The development of science and technology has motivated many branches of control theory. Initially, in the classical control theory, we usually encountered problems where a system must reach a predetermined target by the action of a single control, for example, find the control of minimum norm such that the design specifications are met. To this purpose, one way is to find a control to minimize a cost functional defined on a prescribed set of admissible controls and/or states from the standard optimal control viewpoint. Another way is to seek for a control to reach a desired state at the final time of evolution when we investigate the controllability properties of the controlled system. In many realistic situations, it is necessary to include several different (maybe conflictive or contradictory) control objectives, as well as develop theory that would handle the concepts of multi-criteria optimization, where optimal decisions need to be made in the presence of trade-offs between these different objectives. In particular, the cost functionals may be the sum of several terms and it is not completely clear which is the most suitable average. It can also be expectable to have more than one control acting on the equation. In these cases, we are led to consider multi-objective and/or hierarchic control problems. Moreover, the true goal becomes to find one or several equilibria. Motivated by economics, there exist several equilibrium concepts for multi-objective problems origin in game theory. Each of them determines a strategy. Moreover, these strategies can be cooperative (when the controls mutually cooperate in order to achieve some goals) or noncooperative. It is worthy to mention the noncooperative optimization strategy proposed by Nash \cite{nj}, the Pareto cooperative strategy \cite{pv}, and the Stackelberg hierarchical-cooperative strategy \cite{sh}.

According to the formulation introduced by H. von Stackelberg \cite{sh}, we assume the presence of various local controls, called followers, which have their own objectives, and a global control, called leader, with a different goal from the rest of the players (in this case, the followers). The general idea of this strategy is a game of hierarchical nature, where players compete against each other, so that the leader makes the first move and then the followers react optimally to the action of the leader. Since many followers are present and each has a specific objective, it is intended that these are in Nash equilibrium.

In the context of the control of PDEs, a relevant question is whether one is able to steer the system to a desired state (exactly or approximately) by applying controls that correspond to one of these strategies.
Inspired by ideas of Lions \cite{ljl2, ljl}, there are some works concerning hierarchical control employing the Stackelberg strategy. In particular,  the author in \cite{ljl1} gave some results concerning Pareto and Stackelberg strategies, respectively. The approximate controllability of a system was established following a Stackelberg-Nash strategy in \cite{dji}. Moreover, they in \cite{dji1} also provided a characterization of the solution by means of Fenchel-Rockafellar duality theory. The Nash equilibrium for constraints given by linear parabolic and Burger's equations was analyzed in \cite{ram2, ram1} from the mathematical and numerical viewpoints. In \cite{afd3, afd2, afd, afd1, hsv, lj}, the authors studied the Stackelberg-Nash exact controllability for linear and semilinear parabolic equations.  The existence of a Nash equilibrium for a nonlinear distributed parameter predator-prey system was proved in \cite{ram}.Furthermore, they proposed and analyzed a conceptual approximation algorithm. In \cite{afd4, ggf}, the authors studied a Stackelberg-Nash approximate controllability for two coupled equations of fluid mechanics and the Stokes systems. Recently, in \cite{afd5},the authors also analyzed the hierarchic control for wave equation with distributed leader and follower controls.

Until now, there are some results about the controllability for fourth order parabolic equations in both one dimension (see \cite{cn5, cn7, cn6, ce1, ce, gp, hsv1, lgm}) and the higher dimensions (see \cite{cy, dji2, gs, kk, lq, yh}). In particular, the approximate controllability and non-approximate controllability of higher order parabolic equations were studied in \cite{dji}. The author in \cite{yh} proved the null controllability of fourth order parabolic equations by using the ideas of \cite{lg}. It is worthy to mention that the Carleman inequality for a fourth order parabolic equation with $n\geq 2$ was first established in \cite{gs}. Later, the author in \cite{kk} proved the null controllability and the exact controllability to the trajectories at any time $T>0$ for the fourth order semi-linear parabolic equations with a control function acting at the interior.  Recently, the null controllability for fourth order stochastic parabolic equations was obtained by duality arguments and a new global Carleman estimates in \cite{lq}.  A unified weighted inequality for fourth-order partial differential operators was given in \cite {cy}. Moreover, they applied it to obtain the log-type stabilization result for the plate equation. To the best of our knowledge, there is no result concerning the Stackelberg-Nash exact controllability  of fourth order parabolic equations for $n\geq 2.$

In this paper, we mainly consider some results on the hierarchic exact controllability of the following fourth order parabolic equation through Stackelberg-Nash strategies:
\begin{equation}\label{1.1}
\begin{cases}
\frac{\partial u}{\partial t}+\Delta^2u+a(x,t)u+B(x,t)\cdot\nabla u=F(u,\nabla u)+f\chi_{\mathcal{O}}+v_1\chi_{\mathcal{O}_1}+v_2\chi_{\mathcal{O}_2},\,\,\,\,(x,t)\in Q,\\
u=\frac{\partial u}{\partial\vec{n}}=0,\,\,\,\,\,(x,t)\in\Sigma,\\
u(x,0)=u_0(x),\,\,\,\,\,x\in\Omega,
\end{cases}
\end{equation}
where $\Omega\subset \mathbb{R}^n$ ($n\geq 2$) is a bounded domain with boundary $\Gamma$ of class $\mathcal{C}^4,$ $\vec{n}$ is the outward unit norm vector of $\Gamma$ at the point $x\in\Gamma,$ $\mathcal{O},$ $\mathcal{O}_1,$ $\mathcal{O}_2\subset \Omega$ are three small non-empty open sets, $\chi_\omega$ is the characteristic function of the set $\omega,$ $f,$ $v_1,$ $v_2$ are given in some appropriate spaces, the function $F:\mathbb{R}\times \mathbb{R}^d\rightarrow\mathbb{R}$ is locally Lipschitz continuous, $a\in L^{\infty}(Q),$ $B=(B_1,B_2,\cdots,B_n)\in L^{\infty}(Q;\mathbb{R}^n).$ For any given $T>0,$ denote by $Q=\Omega\times (0,T),$ $Q_\omega=\omega\times (0,T)$ and $\Sigma=\Gamma\times (0,T).$

Let $\mathcal{O}_{1,d},$ $\mathcal{O}_{2,d}$ be nonempty open subsets of $\Omega$ and define the secondary cost functionals
\begin{align}\label{1.2}
J_i(f;v_1,v_2)=\frac{\alpha_i}{2}\int_0^T\int_{\mathcal{O}_{i,d}}|u(x,t)-\zeta_{i,d}(x,t)|^2\,dxdt+\frac{\mu_i}{2}\int_0^T\int_{\mathcal{O}_i}|v_i(x,t)|^2\,dxdt,
\end{align}
where the functions $\zeta_{i,d}$ are given in $L^2(\mathcal{O}_{i,d}\times (0,T))$ and $\alpha_i,$ $\mu_i$ are positive constants for $i=1,2.$ We also introduce the main functional
\begin{align}\label{1.3}
J(f)=\frac{1}{2}\int_0^T\int_{\mathcal{O}}|f(x,t)|^2\,dxdt.
\end{align}
First of all, for any given leader $f,$ we will try to find a couple of controls $v_1$ and $v_2$ depending on $f,$ which "minimize simultaneously" $J_1$ and $J_2$ in the following sense:
\begin{align}\label{1.4}
J_1(f;v_1,v_2)=\min_{\hat{v}_1}J_1(f;\hat{v}_1,v_2),\,\,\,\,\,J_2(f;v_1,v_2)=\min_{\hat{v}_2}J_1(f;v_1,\hat{v}_2).
\end{align}
A pair of such controls $(v_1,v_2)$ satisfying \eqref{1.4} is called a Nash equilibrium for $J_1$ and $J_2$ associated to $f.$

Let $\bar{u}=\bar{u}(x,t)$ be a solution of problem
\begin{equation}\label{1.5}
\begin{cases}
\frac{\partial \bar{u}}{\partial t}+\Delta^2\bar{u}+a(x,t)\bar{u}+B(x,t)\cdot\nabla \bar{u}=F(\bar{u},\nabla \bar{u}),\,\,\,\,(x,t)\in Q,\\
\bar{u}=\frac{\partial\bar{u}}{\partial\vec{n}}=0,\,\,\,\,\,(x,t)\in\Sigma,\\
\bar{u}(x,0)=\bar{u}_0(x),\,\,\,\,\,x\in\Omega.
\end{cases}
\end{equation}
After proving that there exists a Nash equilibrium for each leader $f,$ we will choose a control $f\in L^2(\mathcal{O}\times (0,T)),$ such that
\begin{align}\label{1.6}
J(f)=\min_{\hat{f}}J(\hat{f})
\end{align}
subject to the following exact controllability condition
\begin{align}\label{1.7}
u(x,T)=\bar{u}(x,T),\,\,\,\,\forall\,\,\,x\in\Omega.
\end{align}
Thus, our main goal is to prove that such triplets $(f;v_1,v_2)$ exists.

The rest of this paper is organized as follows. In Section 2, we recall the Carleman inequalities for fourth order parabolic equation with $n\geq 2.$ In Section 3, we first analyze the existence of a Nash equilibrium by Banach's fixed points theorem and prove the exact controllability of problem \eqref{1.1} in the linear case. In Section 4, we establish the exact controllability of problem \eqref{1.1} by using Leray-Schauder fixed-point argument in the semi-linear case. Moreover, we also analyze the relations between Nash equilibrium and Nash quasi-equilibrium.
\section{\bf Preliminaries}
\def\theequation{2.\arabic{equation}}\makeatother
\setcounter{equation}{0}
In this section, we will recall the global Carleman inequalities of fourth order parabolic equations and some lemmas used in the sequel. To this purpose, we need a special weight function.
\begin{lemma}(\cite{fav})\label{2.1}
Let $\omega_0\subset\subset \Omega$ be an arbitrary fixed subdomain of $\Omega.$ Then there exists a function $\eta\in\mathcal{C}^4(\overline{\Omega}),$ such that
\begin{align*}
\eta(x)>0,\,\,\,\,\forall\,\,\,x\in \Omega;\,\,\,\eta(x)=0,\,\,\,\,\forall\,\,\,x\in\Gamma;\,\,\,|\nabla\eta(x)|>0,\,\,\,\,\forall\,\,\,x\in\overline{\Omega\backslash\omega_0}.
\end{align*}
\end{lemma}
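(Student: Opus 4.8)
The plan is to build $\eta$ in two stages: first I would arrange the correct behaviour near the boundary $\Gamma$, where the requirements $\eta=0$ and $|\nabla\eta|>0$ essentially force $\eta$ to be a smoothed distance function, and then I would handle the interior, where the real content lies. The point to keep in mind is that since $\overline{\Omega}$ is compact, any $\eta\in\mathcal{C}^4(\overline{\Omega})$ with $\eta>0$ in $\Omega$ attains an interior maximum and hence \emph{must} have interior critical points; there is no way to avoid them. Thus the substance of the lemma is not the mere existence of such an $\eta$, but the confinement of \emph{all} of its critical points to the prescribed subdomain $\omega_0$. Throughout I assume $\Omega$ is connected (otherwise one runs the argument componentwise, using that $\omega_0$ meets each component).

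For the boundary stage I would use that $\Gamma$ is of class $\mathcal{C}^4$, so the distance function $d(x)=\mathrm{dist}(x,\Gamma)$ is $\mathcal{C}^4$ on a tubular collar $\Gamma_\delta=\{x\in\Omega:\ d(x)<\delta\}$ and satisfies $|\nabla d|\equiv 1$ there. Taking a cutoff $\theta\in\mathcal{C}^4$ equal to $1$ near $\Gamma$ and supported in $\Gamma_\delta$, together with a strictly positive $\mathcal{C}^4$ function on $\overline{\Omega}$, a standard partition-of-unity patching produces $\eta_0\in\mathcal{C}^4(\overline{\Omega})$ with $\eta_0>0$ in $\Omega$, $\eta_0=0$ on $\Gamma$, and $\eta_0=d$, hence $|\nabla\eta_0|=1$, on a smaller collar $\Gamma_{\delta'}$. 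Next I would perturb $\eta_0$ to a Morse function. Using a $\mathcal{C}^4$ perturbation with compact support inside $\Omega$ (so that $\eta_0$, its vanishing on $\Gamma$, and its unit normal derivative near $\Gamma$ are all preserved exactly), genericity of Morse functions yields $\tilde\eta\in\mathcal{C}^4(\overline{\Omega})$ close to $\eta_0$ in $\mathcal{C}^1$ whose critical points are nondegenerate, hence finite in number and all interior; none lie on or near $\Gamma$, since there $|\nabla\tilde\eta|$ is close to $1$. For a small perturbation positivity is also retained on the compact region where the support sits.

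The final and decisive step is relocation. Let $p_1,\dots,p_N\in\Omega$ be the critical points of $\tilde\eta$. Since $\Omega$ is connected and open, I can choose pairwise disjoint smooth embedded arcs joining each $p_j$ to a distinct point $q_j\in\omega_0$, all lying in a compact subset of $\Omega$ away from $\Gamma$. Thickening these arcs and integrating a compactly supported $\mathcal{C}^4$ vector field, I construct a diffeomorphism $\Phi:\overline{\Omega}\to\overline{\Omega}$ that equals the identity in a neighbourhood of $\Gamma$ and satisfies $\Phi(p_j)=q_j\in\omega_0$. Setting $\eta=\tilde\eta\circ\Phi^{-1}$ gives $\eta\in\mathcal{C}^4(\overline{\Omega})$ with the same boundary behaviour (as $\Phi=\mathrm{id}$ near $\Gamma$), still $\eta>0$ in $\Omega$ (as $\Phi$ is a boundary-fixing diffeomorphism), and with critical set exactly $\{q_1,\dots,q_N\}\subset\omega_0$, because $\nabla\eta(x)=0$ iff $\nabla\tilde\eta(\Phi^{-1}(x))=0$. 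Hence $|\nabla\eta|>0$ on $\overline{\Omega\backslash\omega_0}$, which is precisely the assertion.

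The main obstacle is this relocation step: producing a boundary-fixing $\mathcal{C}^4$ diffeomorphism that sweeps all interior critical points into the single small set $\omega_0$. This is where connectedness of $\Omega$ is genuinely used (it guarantees the connecting arcs), and where one must verify that the isotopy is truly the identity near $\Gamma$ so that neither $\eta=0$ on $\Gamma$ nor the non-vanishing of its normal derivative is disturbed. The $\mathcal{C}^4$ regularity of $\Gamma$, of the distance function, and of the flow must be tracked carefully so that the resulting $\eta$ indeed lands in $\mathcal{C}^4(\overline{\Omega})$, as demanded by the fourth-order Carleman estimate that this weight feeds into.
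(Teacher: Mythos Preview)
The paper does not supply its own proof of this lemma: it is quoted as a known auxiliary result with a citation (to the Fursikov--Imanuvilov monograph), so there is no in-paper argument to compare against. Your sketch is the standard construction and is essentially correct: build a boundary collar from the $\mathcal{C}^4$ distance function, perturb to a Morse function by a compactly supported $\mathcal{C}^4$ correction so as to leave the collar untouched, and then sweep the finitely many interior critical points into $\omega_0$ by the time-one map of a compactly supported $\mathcal{C}^4$ vector field.

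Two small points are worth tightening. First, your remark about the disconnected case is not quite right: if $\Omega$ has several components there is no reason that the fixed $\omega_0$ meets each of them, and then the statement is simply false as written; however, in the paper $\Omega$ is a \emph{domain}, hence connected, so this is not an issue here. Second, in the Morse-perturbation step you should state explicitly that the critical set of $\eta_0$ is already contained in a compact subset of $\Omega$ (because $|\nabla\eta_0|=1$ on the collar), so that a perturbation supported in that compact set suffices to make all critical points nondegenerate while leaving the boundary behaviour and positivity intact; you essentially say this, but it is the hinge on which the whole argument turns.
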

In order to state the global Carleman inequalities, we introduce the following two weight functions:
\begin{align}\label{2.2}
\alpha(x,t)=\frac{e^{\lambda(2\|\eta\|_{L^{\infty}(\Omega)}+\eta(x))}-e^{4\lambda\|\eta\|_{L^{\infty}(\Omega)}}}{\sqrt{t(T-t)}},\,\,\,\,\xi(x,t)=\frac{e^{\lambda(2\|\eta\|_{L^{\infty}(\Omega)}+\eta(x))}}{\sqrt{t(T-t)}},
\end{align}
which possess the following properties:
\begin{align}\label{2.0}
\nabla\alpha=\nabla\xi=\lambda\xi\nabla\eta,\,\,\,\,\,\,\,
\xi^{-1}\leq \frac{T}{2},\,\,\,\,\,\,\,
|\alpha_t|+|\xi_t|\leq\frac{T}{2}\xi^3,\,\,\,\,\forall\,\,(x,t)\in Q.
\end{align}
\begin{lemma}\label{2.4}(see \cite{gs})
Assume that $z_0\in L^2(\Omega)$ and $g\in L^2(Q),$ the functions $\alpha,$ $\xi$ are defined by \eqref{2.2}. Then there exists a constant $\hat{\lambda}>0$ such that for an arbitrary $\lambda\geq \hat{\lambda},$ we can choose $s_0=s_0(\lambda)>0$ satisfying: there exists a constant $C=C(\lambda)>0$ independent of $s,$ such that the solution $z\in L^2(Q)$ to problem
\begin{equation*}
\begin{cases}
-\frac{\partial z}{\partial t}+\Delta^2 z=g,\,\,\,\,\forall\,\,\,(x,t)\in Q,\\
z=\frac{\partial z}{\partial\vec{n}}=0,\,\,\,\,\,\,\,\,\,\,\,\,\,\forall\,\,\,\,(x,t)\in\Sigma,\\
z(x,T)=z_0(x),\,\,\,\,\forall\,\,\,\,x\in\Omega
\end{cases}
\end{equation*}
satisfies the following inequality: for any $\lambda\geq \hat{\lambda}$  and any $s\geq s_0(\lambda)(\sqrt{T}+T),$ one has
\begin{align*}
&\int_Q\left(s^6\lambda^8\xi^6|z|^2+s^4\lambda^6\xi^4|\nabla z|^2+s^3\lambda^4\xi^3|\Delta z|^2+s^2\lambda^4\xi^2|\nabla^2z|^2+s\lambda^2\xi|\nabla\Delta z|^2\right)e^{2s\alpha}\,dxdt\\
%&+\int_Q\frac{1}{s\xi}\left(|z_t|^2+|\Delta^2z|^2\right)e^{2s\alpha}\,dxdt\\
&\leq C\left(\int_{Q_\omega}s^7\lambda^8\xi^7|z|^2e^{2s\alpha}\,dxdt+\int_Q|g|^2e^{2s\alpha}\,dxdt\right).
\end{align*}
\end{lemma}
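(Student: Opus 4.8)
The plan is to follow the classical Fursikov--Imanuvilov conjugation method adapted to the biharmonic operator. First I would introduce the weighted variable $w=e^{s\alpha}z$ and compute the conjugated operator $Lw:=e^{s\alpha}(-\partial_t+\Delta^2)(e^{-s\alpha}w)$. Expanding the four spatial derivatives and the time derivative that fall on the exponential weight, and repeatedly invoking the identities in \eqref{2.0} (in particular $\nabla\alpha=\lambda\xi\nabla\eta$, so that each differentiation of the weight produces a factor $\lambda\xi\nabla\eta$, together with the bounds $\xi^{-1}\le T/2$ and $|\alpha_t|+|\xi_t|\le\frac{T}{2}\xi^3$), I would sort the resulting terms by their order in $s$ and $\lambda$ and write $Lw=P_1w+P_2w+Rw$, where $P_1w$ collects the formally self-adjoint contributions (the even-order spatial pieces, headed by the principal $s^4(\nabla\alpha)^4$ term), $P_2w$ the skew-adjoint ones (the time derivative $-\partial_t w$ together with the odd-order weighted terms), and $Rw$ a remainder of genuinely lower order in $s$ and $\lambda$.

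Since $Lw=e^{s\alpha}g$, moving $Rw$ to the right and taking squared $L^2(Q)$ norms gives
\[
\|P_1w\|_{L^2(Q)}^2+\|P_2w\|_{L^2(Q)}^2+2(P_1w,P_2w)_{L^2(Q)}\le C\bigl(\|e^{s\alpha}g\|_{L^2(Q)}^2+\|Rw\|_{L^2(Q)}^2\bigr).
\]
The heart of the argument is the evaluation of the cross term $(P_1w,P_2w)_{L^2(Q)}$ by repeated integration by parts in $x$ and $t$. Each integration transfers derivatives between the two factors and, after using $\nabla\alpha=\lambda\xi\nabla\eta$, produces a dominant distributed term $\int_Q s^6\lambda^8\xi^6|\nabla\eta|^{8}|w|^2\,dxdt$ together with analogous sign-definite contributions in $|\nabla w|^2$, $|\nabla^2 w|^2$, $|\Delta w|^2$ and $|\nabla\Delta w|^2$, carrying exactly the powers of $s$, $\lambda$, $\xi$ advertised in the statement. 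The temporal integrations leave no leftover since $w$ vanishes at $t=0,T$ because $\alpha\to-\infty$ there.

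The spatial integrations by parts also generate boundary integrals over $\Sigma$, and it is here that the clamped conditions $z=\frac{\partial z}{\partial\vec n}=0$ (hence $w=\frac{\partial w}{\partial\vec n}=0$ on $\Sigma$) are indispensable: I would check that each boundary contribution either vanishes outright or reduces to a sign-definite term involving $\frac{\partial\Delta w}{\partial\vec n}$, whose favorable sign is guaranteed by the construction of $\eta$ with $|\nabla\eta|>0$ near $\Gamma$ (Lemma \ref{2.1}). The remainder $\|Rw\|^2$ and all subprincipal terms spawned in the process are then absorbed into the dominant positive terms by first fixing $\lambda\ge\hat\lambda$ large enough to beat the polynomial-in-$\lambda$ coefficients, and then taking $s\ge s_0(\lambda)(\sqrt T+T)$ large enough to dominate the time-derivative corrections via the bounds in \eqref{2.0}. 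Because $|\nabla\eta|$ degenerates on $\omega_0$, I would split $\Omega=(\Omega\setminus\omega_0)\cup\omega_0$: on $\Omega\setminus\omega_0$ the factor $|\nabla\eta|^{8}$ is bounded below, while the mass on $\omega_0$ is recovered by inserting a cutoff $\theta\in C_c^\infty(\omega)$ with $\theta\equiv1$ on $\omega_0$ and estimating $\int_{\omega_0}s^6\lambda^8\xi^6|w|^2$ against the local observation; this localization costs one extra factor of $s\xi$, which accounts for the power $s^7\lambda^8\xi^7$ in the local term. Finally, undoing $z=e^{-s\alpha}w$ and estimating derivatives of $z$ by those of $w$ (each derivative on $e^{-s\alpha}$ costing a factor $s\lambda\xi$, compatible with the stated exponents) converts the estimate for $w$ into the claimed inequality for $z$.

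I expect the principal obstacle to be the bookkeeping of the cross term for a fourth-order operator. Unlike the heat equation, the integration by parts produces a large family of intermediate terms involving the mixed quantities $\nabla^2 w$, $\Delta w$ and $\nabla\Delta w$, and one must verify that after the substitution $\nabla\alpha=\lambda\xi\nabla\eta$ every such term is either sign-definite at top order or strictly subordinate, so that it can be absorbed. Controlling the interaction between the $|\nabla^2 w|^2$ and $|\Delta w|^2$ terms at the correct powers of $\lambda$ — where the biharmonic estimate assigns them the distinct weights $s^2\lambda^4\xi^2$ and $s^3\lambda^4\xi^3$ — will be the most delicate point of the calculation.
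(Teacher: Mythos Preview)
The paper does not prove this lemma; it is quoted from the reference \cite{gs} and used as a black box in the subsequent arguments. Your proposal outlines the classical Fursikov--Imanuvilov conjugation strategy adapted to the biharmonic operator, which is precisely the approach carried out in that reference, so there is nothing substantively different to compare.
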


\begin{lemma}\label{2.5}(see \cite{kk})
Assume that $z_0\in H^{-2}(\Omega),$ $F_0\in L^2(Q),$ $F_1\in (L^2(Q))^n,$ $\hat{F}=(\hat{F}_{i,j})_{1\leq i,j\leq n}\in (L^2(Q))^{n^2},$ $\tilde{F}=(\tilde{F}_{i,j,k})_{1\leq i,j,k\leq n}\in (L^2(Q))^{n^3},$ $f_0\in L^2(\Sigma),$ $\tilde{f}\in L^2(\Sigma)$ and the functions $\alpha,$ $\xi$ are defined by \eqref{2.2}. Then there exists a constant $\hat{\lambda}>0$ such that for an arbitrary $\lambda\geq \hat{\lambda},$ we can choose $s_0=s_0(\lambda)>0$ satisfying: there exists a constant $C=C(\lambda)>0$ independent of $s,$ such that the solution $z\in L^2(Q)$ to problem
\begin{equation*}
\begin{cases}
-\frac{\partial z}{\partial t}+\Delta^2 z=F_0+\nabla\cdot F_1+\sum_{i,j=1}^n\frac{\partial^2\hat{F}_{ij}}{\partial x_i\partial x_j}+\sum_{i,j,k=1}^n\frac{\partial^3\tilde{F}_{ijk}}{\partial x_i\partial x_j\partial x_k},\,\,\,\,\forall\,\,\,(x,t)\in Q,\\
z=f_0,\,\,\,\frac{\partial z}{\partial\vec{n}}=\tilde{f},\,\,\,\,\,\forall\,\,\,\,(x,t)\in\Sigma,\\
z(x,T)=z_0(x),\,\,\,\,\forall\,\,\,\,x\in\Omega
\end{cases}
\end{equation*}
satisfies the following inequality: for any $\lambda\geq \hat{\lambda}$  and any $s\geq s_0(\lambda)(\sqrt{T}+T),$ one has
\begin{align*}
&\int_Q\left(s^6\lambda^8\xi^6|z|^2+s^4\lambda^6\xi^4|\nabla z|^2+s^3\lambda^4\xi^3|\Delta z|^2+s^2\lambda^4\xi^2|\nabla^2z|^2+s\lambda^2\xi|\nabla\Delta z|^2\right)e^{2s\alpha}\,dxdt\\
%&+\int_Q\frac{1}{s\xi}\left(|z_t|^2+|\Delta^2z|^2\right)e^{2s\alpha}\,dxdt\\
\leq &\int_Q\left(|F_0|^2+s^2\lambda^2\xi^2|F_1|^2+s^4\lambda^4\xi^4\sum_{i,j=1}^n|\hat{F}_{ij}|^2+s^6\lambda^6\xi^6\sum_{i,j,k=1}^n|\tilde{F}_{ijk}|^2\right)e^{2s\alpha}\,dxdt\\
&+C\int_\Sigma\left(s^5\lambda^5\xi^3|\tilde{f}|^2e^{2s\alpha}+s^7\lambda^7\xi^7|f_0|^2e^{2\alpha}+s^5\lambda^5\xi^5\sum_{i,j,k=1}^n|\tilde{F}_{ijk}n_j|^2e^{2s\alpha}\right)\,dSdt\\
&+C\int_{Q_\omega}s^7\lambda^8\xi^7|z|^2e^{2s\alpha}\,dxdt.
\end{align*}
\end{lemma}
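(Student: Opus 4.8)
The plan is to regard the stated inequality as a divergence-form refinement of the basic fourth-order Carleman estimate in Lemma \ref{2.4}: the two left-hand sides coincide, so the interior positivity computation that produces the dominant weighted norms of $z,\nabla z,\Delta z,\nabla^2 z$ and $\nabla\Delta z$ may be carried out exactly as in \cite{gs}. What must be added is the treatment of (i) the source written through derivatives of $F_1,\hat F,\tilde F$ of orders one, two and three, and (ii) the inhomogeneous boundary data $f_0,\tilde f$. I would handle both by transferring every derivative off the data and onto the Carleman weight $e^{s\alpha}$ and the solution via integration by parts, and then absorbing the outcome into the dominant left-hand side.

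Concretely, I would first argue by density so that all integrations by parts are legitimate, and conjugate by setting $w=e^{s\alpha}z$; splitting the conjugated biharmonic operator into its self-adjoint and skew-adjoint parts and expanding the square produces, after integrating the cross term by parts, the dominant weighted norms on the left together with interior and boundary remainders, precisely as in the proof of Lemma \ref{2.4}. The new point is the right-hand side: each source term is paired against a weighted multiplier built from $z$ and its derivatives, and the $k$ derivatives in $\nabla\cdot F_1$, $\sum\partial^2_{ij}\hat F_{ij}$ and $\sum\partial^3_{ijk}\tilde F_{ijk}$ are integrated by parts onto this multiplier. Since $\nabla\big(e^{2s\alpha}\big)=2s\lambda\xi\,(\nabla\eta)\,e^{2s\alpha}$ by \eqref{2.0}, each differentiation that falls on the weight costs exactly one factor $s\lambda\xi$; hence the $k$-th order source is naturally controlled with the weight $(s\lambda\xi)^{2k}$, which is exactly the power $s^2\lambda^2\xi^2$, $s^4\lambda^4\xi^4$, $s^6\lambda^6\xi^6$ attached to $|F_1|^2$, $|\hat F_{ij}|^2$, $|\tilde F_{ijk}|^2$ on the right, while $F_0$ keeps weight one. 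Each resulting product is then split by the weighted Young inequality $ab\le\epsilon a^2+\tfrac1{4\epsilon}b^2$ into a small multiple of a left-hand-side weighted norm of $z$, absorbed for $s,\lambda$ large, plus the advertised weighted norm of the datum.

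The inhomogeneous boundary data and the boundary integrals are produced by the same computation: the cross-term integration by parts of $\Delta^2$ leaves surface integrals carrying the traces of $z$ and $\partial_{\vec n}z$, which reduce to $f_0$ and $\tilde f$ and yield the terms $s^7\lambda^7\xi^7|f_0|^2$ and $s^5\lambda^5\xi^3|\tilde f|^2$, whereas the third integration by parts of the top-order source leaves the normal trace $\tilde F_{ijk}n_j$ and yields $s^5\lambda^5\xi^5\sum|\tilde F_{ijk}n_j|^2$. The heaviest bookkeeping, and the step I expect to be the main obstacle, is this top-order term $\sum\partial^3_{ijk}\tilde F_{ijk}$: after three integrations by parts it meets the weakest of the interior norms, the one on $\nabla\Delta z$ that carries only $s\lambda^2\xi$, so the three weight-differentiations must be distributed so that the worst interior remainder is exactly $s^6\lambda^6\xi^6|\tilde F_{ijk}|^2$ and the worst boundary remainder exactly $s^5\lambda^5\xi^5|\tilde F_{ijk}n_j|^2$, with no stronger weight appearing. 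Verifying that with this distribution the small constant $\epsilon$ times the left-hand side can indeed be hidden for all $\lambda\ge\hat\lambda$ and $s\ge s_0(\lambda)(\sqrt T+T)$ is the delicate part; the contributions of $F_0,F_1,\hat F$ and of the boundary data are handled by the same mechanism with strictly more favorable powers.
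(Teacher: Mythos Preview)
The paper does not give its own proof of this lemma: it is stated as a preliminary result with the attribution ``(see \cite{kk})'' and no argument is supplied. There is therefore nothing in the paper to compare your proposal against.

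For what it is worth, your sketch follows the standard route by which such divergence-form Carleman estimates are derived: reproduce the interior positivity computation of Lemma~\ref{2.4} after conjugation, then handle the structured right-hand side by integrating the $k$ derivatives of each source term by parts onto the weight and the solution, using $\nabla(e^{2s\alpha})=2s\lambda\xi\,(\nabla\eta)\,e^{2s\alpha}$ to account for the factor $(s\lambda\xi)^{2k}$ attached to the $k$-th order datum, and finally absorb via Young's inequality. Your identification of the third-order term $\sum\partial^3_{ijk}\tilde F_{ijk}$ as the delicate one---because its interior contribution must be balanced against the weakest left-hand-side norm $s\lambda^2\xi|\nabla\Delta z|^2$ and its trace produces the boundary term in $\tilde F_{ijk}n_j$---is accurate, and this is indeed where the bookkeeping in \cite{kk} is concentrated. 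The only caveat is that your outline remains a plan rather than a proof: the actual verification that all lower-order and mixed remainders generated by the repeated integrations by parts can be absorbed with the stated thresholds on $s$ and $\lambda$, and that the boundary integrals reduce to exactly the three surface terms listed (with those specific powers), is lengthy and is the substance of the argument in \cite{kk}.
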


\section{The linear case}
\def\theequation{3.\arabic{equation}}\makeatother
\setcounter{equation}{0}
In this section, we will study the exact controllability of problem \eqref{1.1} with distributed leader and followers in the case that $F\equiv 0.$ Thanks to the linearity of the problem under consideration, we can reduce the exact controllability of the trajectories to a null controllability property. In fact, if we put $w=u-\bar{u},$ then it is immediate to deduce from problem \eqref{1.1} and problem \eqref{1.5} that $w$ is the solution of the following problem 
\begin{equation}\label{eq2.1}
\begin{cases}
\frac{\partial w}{\partial t}+\Delta^2w+a(x,t)w+B(x,t)\cdot\nabla w=f\chi_{\mathcal{O}}+v_1\chi_{\mathcal{O}_1}+v_2\chi_{\mathcal{O}_2},\,\,\,\,(x,t)\in Q,\\
w=\frac{\partial w}{\partial\vec{n}}=0,\,\,\,\,\,(x,t)\in\Sigma,\\
w(x,0)=w_0(x),\,\,\,\,\,x\in\Omega,
\end{cases}
\end{equation}
where $w_0(x)=u_0(x)-\bar{u}_0(x).$ Moreover, the condition \eqref{1.7} is equivalent to 
\begin{align}\label{eq2.2}
w(x,T)=0,\,\,\,\,\forall\,\,\,x\in\Omega
\end{align}
and we can also rewrite the functionals $J_i$ in \eqref{1.2} in terms of $w:$ 
\begin{align*}
J_i(f;v_1,v_2)=\frac{\alpha_i}{2}\int_0^T\int_{\mathcal{O}_{i,d}}|w(x,t)-w_{i,d}(x,t)|^2\,dxdt+\frac{\mu_i}{2}\int_0^T\int_{\mathcal{O}_i}|v_i(x,t)|^2\,dxdt
\end{align*} 
with $w_{i,d}=\zeta_{i,d}-\bar{u}$ for $i=1,2.$
\subsection{Nash equilibrium}
In this subsection, we will establish a result concerning the existence and uniqueness of a Nash equilibrium in the sense of \eqref{1.4} for any $f\in L^2(\mathcal{O}\times (0,T)),$ and we will also give a result which characterizes this Nash equilibrium in terms of the solution of an adjoint system.

%For the moment, we do not have to impose the assumptions \eqref{1.8}-\eqref{1.9}. This requirement only appears later, in Section 2.2, when the choice of $f$ have to be made. Accordingly, in this section we keep the notation $\mathcal{O}_{i,d}$ for $i=1,2.$
\subsubsection{The existence and uniqueness}
To start with, we define $\mathcal{H}_i=L^2(\mathcal{O}_i\times (0,T))$ for any $i=1,2$ and $\mathcal{H}=\mathcal{H}_1\times \mathcal{H}_2.$ For any control $f,$  we infer from the definition of Nash equilibrium that the pair $(v_1,v_2)$ is a Nash equilibrium if and only if it satisfies $J_i'(f;v_1,v_2)\hat{v}_i=0$ for any $\hat{v}_i\in L^2(\mathcal{O}_i\times (0,T))$ and any $i=1,2,$
 i.e.,
\begin{align}\label{eq2.3}
\alpha_i\int_0^T\int_{\mathcal{O}_{i,d}}(w-w_{i,d})w_i\,dxdt+\mu_i\int_0^T\int_{\mathcal{O}_i}v_i\hat{v}_i\,dxdt=0
\end{align}
for any $\hat{v}_i\in \mathcal{H}_i,$ where $w_i$ is the derivative of $w$ with respect to $v_i$ in the direction $\hat{v}_i,$ i.e., $w_i$ is the solution of problem
\begin{equation}\label{eq2.4}
\begin{cases}
\frac{\partial w_i}{\partial t}+\Delta^2w_i+a(x,t)w_i+B(x,t)\cdot\nabla w_i=\hat{v}_i\chi_{\mathcal{O}_i},\,\,\,\,(x,t)\in Q,\\ 
w_i=\frac{\partial w_i}{\partial\vec{n}}=0,\,\,\,\,\,(x,t)\in\Sigma,\\
w_i(x,0)=0,\,\,\,\,\,x\in\Omega.
\end{cases}
\end{equation}
By the regularity of parabolic equation, we can define the operators $A_i\in L(\mathcal{H}_i,L^2(Q))$ by
\begin{align*}
A_i\hat{v}_i=w_i,
\end{align*}
where $w_i$ is the solution of problem \eqref{eq2.4}. Let $z$ be the solution of problem 
\begin{equation}\label{eq2.5}
\begin{cases}
\frac{\partial z}{\partial t}+\Delta^2z+a(x,t)z+B(x,t)\cdot\nabla z=f\chi_{\mathcal{O}},\,\,\,\,(x,t)\in Q,\\
z=\frac{\partial z}{\partial\vec{n}}=0,\,\,\,\,\,(x,t)\in\Sigma,\\
z(x,0)=w_0,\,\,\,\,\,x\in\Omega
\end{cases}
\end{equation}
and denote by $w=z+A_1v_1+A_2v_2,$ then we can recast equality \eqref{eq2.3} into the following form:
\begin{align}\label{eq2.6}
\alpha_i\int_0^T\int_{\mathcal{O}_{i,d}}(z+A_1v_1+A_2v_2-w_{i,d})A_i\hat{v}_i\,dxdt+\mu_i\int_0^T\int_{\mathcal{O}_i}v_i\hat{v}_i\,dxdt=0
\end{align}
for any $\hat{v}_i\in \mathcal{H}_i,$ which implies that $(v_1,v_2)$ is a Nash equilibrium if and only if
\begin{align}\label{eq2.7}
\alpha_iA_i^*\left((z+A_1v_1+A_2v_2-w_{i,d})\chi_{\mathcal{O}_{i,d}}\right)+\mu_iv_i=0,\,\,\,\textit{in}\,\,\,\,\mathcal{H}_i
\end{align}
for any $i=1,2,$ where $A_i^*$ is the adjoint operator of $A_i.$ Denote by
\begin{align}
A(v_1,v_2)=\left(\alpha_1A_1^*\left((A_1v_1+A_2v_2)\chi_{\mathcal{O}_{1,d}}\right)+\mu_1v_1,\alpha_2A_2^*\left((A_1v_1+A_2v_2)\chi_{\mathcal{O}_{2,d}}\right)+\mu_2v_2\right)
\end{align}
and
\begin{align}
B=\left(\alpha_1A_1^*\left((w_{1,d}-z)\chi_{\mathcal{O}_{1,d}}\right),\alpha_2A_2^*\left((w_{2,d}-z)\chi_{\mathcal{O}_{2,d}}\right)\right),
\end{align}
then  equation \eqref{eq2.7} can be rewritten  into the following functional form:
\begin{align}\label{eq2.8}
A(v_1,v_2)=B, \,\,\textit{in}\,\,\mathcal{H}.
\end{align}
In what follows, we will establish the well-posedness of problem \eqref{eq2.8} by Lax-Milgram Theorem.
\begin{proposition}\label{pro2.1}
Let
\begin{align*}
M_0=\max_{i=1,2}\{\|A_i\|_{L(\mathcal{H}_i,L^2(\mathcal{O}_{i,d}\times (0,T)))},\|A_i\|_{L(\mathcal{H}_i,L^2(\mathcal{O}_{3-i,d}\times (0,T)))}\}.
\end{align*}
Assume that $M_0^2+4\leq 4\frac{\min\{\mu_1,\mu_2\}}{\max\{\alpha_1,\alpha_2\}}.$   Then the operator $A$ is an isomorphism from $\mathcal{H}$ to $\mathcal{H}^*.$ In particular, for any $f\in L^2(\mathcal{O}\times (0,T)),$ there exists a unique Nash equilibrium $(v_1(f),v_2(f))$ of problem \eqref{eq2.8}. Moreover, there exists a generic constant $\mathcal{K}>0$ depending on $\|a\|_{L^{\infty}(Q)},$ $\|B\|_{L^{\infty}(Q)},$ $\alpha_i,$ $M_0,$ $\|w_0\|_{L^2(\Omega)},$ $\|w_{1,d}\|_{L^2(\mathcal{O}_{1,d}\times(0,T))},$ $\|w_{2,d}\|_{L^2(\mathcal{O}_{2,d}\times(0,T))}$ and $T,$ such that
\begin{align}\label{eq2.9}
\|(v_1(f),v_2(f))\|_{\mathcal{H}}\leq\mathcal{K}\left(1+\|f\|_{L^2(\mathcal{O}\times (0,T))}\right).
\end{align}
\end{proposition}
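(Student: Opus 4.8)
The plan is to solve the operator equation \eqref{eq2.8} by the Lax--Milgram theorem, as announced. First I would recast it in variational form: introduce the bilinear form $\mathfrak{a}:\mathcal{H}\times\mathcal{H}\to\mathbb{R}$ and the linear functional $\ell:\mathcal{H}\to\mathbb{R}$ by
\[
\mathfrak{a}((v_1,v_2),(\hat v_1,\hat v_2))=\sum_{i=1}^2\Big(\alpha_i\int_0^T\!\!\int_{\mathcal{O}_{i,d}}(A_1v_1+A_2v_2)\,A_i\hat v_i\,dxdt+\mu_i\int_0^T\!\!\int_{\mathcal{O}_i}v_i\hat v_i\,dxdt\Big),
\]
\[
\ell((\hat v_1,\hat v_2))=\sum_{i=1}^2\alpha_i\int_0^T\!\!\int_{\mathcal{O}_{i,d}}(w_{i,d}-z)\,A_i\hat v_i\,dxdt,
\]
so that, after taking adjoints, \eqref{eq2.8} is equivalent to finding $(v_1,v_2)\in\mathcal{H}$ with $\mathfrak{a}((v_1,v_2),(\hat v_1,\hat v_2))=\ell((\hat v_1,\hat v_2))$ for all $(\hat v_1,\hat v_2)\in\mathcal{H}$. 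Note that $\mathfrak{a}$ is in general \emph{not} symmetric (the weights $\alpha_i$ and the observation sets $\mathcal{O}_{i,d}$ differ), so I would use the non-symmetric version of Lax--Milgram. Since $A_i\in L(\mathcal{H}_i,L^2(Q))$ and, by the definition of $M_0$, $\|A_i\hat v_i\|_{L^2(\mathcal{O}_{j,d}\times(0,T))}\le M_0\|\hat v_i\|_{\mathcal{H}_i}$ for $j\in\{i,3-i\}$, the Cauchy--Schwarz inequality immediately gives continuity of both $\mathfrak{a}$ and $\ell$.

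The heart of the argument is coercivity. Testing with $(v_1,v_2)$ gives
\[
\mathfrak{a}((v_1,v_2),(v_1,v_2))=\sum_{i=1}^2\mu_i\|v_i\|_{\mathcal{H}_i}^2+\sum_{i=1}^2\alpha_i\int_0^T\!\!\int_{\mathcal{O}_{i,d}}(A_1v_1+A_2v_2)\,A_iv_i\,dxdt.
\]
The first sum is bounded below by $\min\{\mu_1,\mu_2\}\|(v_1,v_2)\|_{\mathcal{H}}^2$, but the second sum is indefinite because of the coupling terms $\int_{\mathcal{O}_{i,d}}(A_{3-i}v_{3-i})A_iv_i$, which carry no sign. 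I would split off the diagonal part $\sum_i\alpha_i\|A_iv_i\|_{L^2(\mathcal{O}_{i,d}\times(0,T))}^2\ge0$ and control the cross terms by Young's inequality together with $\|A_{3-i}v_{3-i}\|_{L^2(\mathcal{O}_{i,d}\times(0,T))}\le M_0\|v_{3-i}\|_{\mathcal{H}_{3-i}}$, obtaining a lower bound of the form $\big(\min\{\mu_1,\mu_2\}-c_0\max\{\alpha_1,\alpha_2\}M_0^2\big)\|(v_1,v_2)\|_{\mathcal{H}}^2$. The smallness hypothesis $M_0^2+4\le 4\min\{\mu_1,\mu_2\}/\max\{\alpha_1,\alpha_2\}$ is exactly what forces this coefficient to be strictly positive, so that $\mathfrak{a}((v_1,v_2),(v_1,v_2))\ge c\|(v_1,v_2)\|_{\mathcal{H}}^2$ for some $c>0$. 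Winning this competition, i.e. dominating the sign-indefinite coupling terms by the positive regularization terms $\mu_i\|v_i\|^2$, is the main obstacle, and the hypothesis is tailored precisely to it.

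With continuity and coercivity established, Lax--Milgram yields a unique $(v_1,v_2)\in\mathcal{H}$ solving the variational problem, hence \eqref{eq2.8}; continuity and coercivity of $\mathfrak{a}$ also show that the induced operator $A$ is an isomorphism of $\mathcal{H}$ onto $\mathcal{H}^*$ with $\|A^{-1}\|\le 1/c$. This produces the unique Nash equilibrium $(v_1(f),v_2(f))$. For the quantitative bound \eqref{eq2.9} I would test the variational identity with $(v_1,v_2)$ itself and combine coercivity with $|\ell((v_1,v_2))|\le\|\ell\|_{\mathcal{H}^*}\|(v_1,v_2)\|_{\mathcal{H}}$ to get $\|(v_1,v_2)\|_{\mathcal{H}}\le c^{-1}\|\ell\|_{\mathcal{H}^*}$. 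It then remains to estimate $\|\ell\|_{\mathcal{H}^*}$: by duality and the definition of $M_0$ one has $\|A_i^*((w_{i,d}-z)\chi_{\mathcal{O}_{i,d}})\|_{\mathcal{H}_i}\le M_0\|w_{i,d}-z\|_{L^2(\mathcal{O}_{i,d}\times(0,T))}$, whence $\|\ell\|_{\mathcal{H}^*}\le C\sum_{i}\alpha_i\big(\|w_{i,d}\|_{L^2(\mathcal{O}_{i,d}\times(0,T))}+\|z\|_{L^2(Q)}\big)$. Finally, the standard energy estimate for the fourth-order parabolic problem \eqref{eq2.5} gives $\|z\|_{L^2(Q)}\le C\big(\|w_0\|_{L^2(\Omega)}+\|f\|_{L^2(\mathcal{O}\times(0,T))}\big)$, with $C$ depending on $\|a\|_{L^\infty(Q)}$, $\|B\|_{L^\infty(Q)}$ and $T$. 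Collecting all constants into a single $\mathcal{K}$ depending on the stated data yields \eqref{eq2.9}.
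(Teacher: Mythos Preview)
Your plan coincides with the paper's: variational reformulation, non-symmetric Lax--Milgram via continuity and coercivity, then the energy estimate for $z$ from \eqref{eq2.5} to bound $\|\ell\|_{\mathcal{H}^*}$ and obtain \eqref{eq2.9}. The only point to watch is the coercivity computation: if you merely discard the diagonal terms $\alpha_i\|A_iv_i\|_{L^2(\mathcal{O}_{i,d}\times(0,T))}^2$ as nonnegative and then bound the cross terms by Young, you end up with a coefficient of the form $\min\{\mu_1,\mu_2\}-c_0\max\{\alpha_1,\alpha_2\}M_0^2$ with $c_0$ of order $1$, and this is \emph{not} guaranteed positive by the hypothesis $M_0^2+4\le 4\min\{\mu_1,\mu_2\}/\max\{\alpha_1,\alpha_2\}$. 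The paper instead keeps the diagonal and uses it in Young's inequality, writing $\alpha_i(A_iv_i)(A_{3-i}v_{3-i})\ge -\alpha_i|A_iv_i|^2-\tfrac{\alpha_i}{4}|A_{3-i}v_{3-i}|^2$, so that diagonal plus cross term is $\ge -\tfrac{\alpha_i}{4}M_0^2\|v_{3-i}\|_{\mathcal{H}_{3-i}}^2$; this gives $c_0=\tfrac14$, and then the hypothesis yields a coercivity constant of at least $\max\{\alpha_1,\alpha_2\}>0$.
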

\begin{proof}
First of all, we conclude from H\"{o}lder's inequality that for any $(\hat{v}_1,\hat{v}_2)\in\mathcal{H},$
\begin{align*}
&\langle A(v_1,v_2),(\hat{v}_1,\hat{v}_2)\rangle=\int_0^T\int_{\mathcal{O}_1}\mu_1v_1\hat{v}_1+\alpha_1(A_1v_1+A_2v_2)A_1\hat{v}_1\chi_{\mathcal{O}_{1,d}}\,dxdt\\
&+\int_0^T\int_{\mathcal{O}_2}\mu_2v_2\hat{v}_2+\alpha_2(A_1v_1+A_2v_2)A_2\hat{v}_2\chi_{\mathcal{O}_{2,d}}\,dxdt\\
\leq&\mu_1\|v_1\|_{\mathcal{H}_1}\|\hat{v}_1\|_{\mathcal{H}_1}+\alpha_1(\|A_1v_1\chi_{\mathcal{O}_{1,d}}\|_{\mathcal{H}_1}+\|A_2v_2\chi_{\mathcal{O}_{1,d}}\|_{\mathcal{H}_1})\|A_1\hat{v}_1\chi_{\mathcal{O}_{1,d}}\|_{\mathcal{H}_1}\\
&+\mu_2\|v_2\|_{\mathcal{H}_2}\|\hat{v}_2\|_{\mathcal{H}_2}+\alpha_2(\|A_1v_1\chi_{\mathcal{O}_{2,d}}\|_{\mathcal{H}_2}+\|A_2v_2\chi_{\mathcal{O}_{2,d}}\|_{\mathcal{H}_2})\|A_2\hat{v}_2\chi_{\mathcal{O}_{2,d}}\|_{\mathcal{H}_2}\\
\leq&(\mu_1+\mu_2+2(\alpha_1+\alpha_2)M_0^2)\|(v_1,v_2)\|_{\mathcal{H}}\|(\hat{v}_1,\hat{v}_2)\|_{\mathcal{H}}
\end{align*}
and
\begin{align*}
\langle A(v_1,v_2),(v_1,v_2)\rangle=&\int_0^T\int_{\mathcal{O}_1}\mu_1|v_1|^2+\alpha_1(A_1v_1+A_2v_2)A_1v_1\chi_{\mathcal{O}_{1,d}}\,dxdt\\
&+\int_0^T\int_{\mathcal{O}_2}\mu_2|v_2|^2+\alpha_2(A_1v_1+A_2v_2)A_2v_2\chi_{\mathcal{O}_{2,d}}\,dxdt\\
\geq&\int_0^T\int_{\mathcal{O}_1}\mu_1|v_1|^2-\frac{\alpha_1}{4}|A_2v_2|^2\chi_{\mathcal{O}_{1,d}}\,dxdt\\
&+\int_0^T\int_{\mathcal{O}_2}\mu_2|v_2|^2-\frac{\alpha_2}{4}|A_1v_1|^2\chi_{\mathcal{O}_{2,d}}\,dxdt\\
\geq&(\mu_1-\frac{\alpha_2}{4}M_0^2)\|v_1\|_{\mathcal{H}_1}^2+(\mu_2-\frac{\alpha_1}{4}M_0^2)\|v_2\|_{\mathcal{H}_2}^2\\
\geq&\frac{1}{4}\min\{\alpha_1,\alpha_2\}\|(v_1,v_2)\|_{\mathcal{H}}^2.
\end{align*}
Moreover, for any $(\hat{v}_1,\hat{v}_2)\in\mathcal{H},$ we have
\begin{align*}
\langle B, (\hat{v}_1,\hat{v}_2)\rangle=&\alpha_1\int_0^T\int_{\mathcal{O}_1}(w_{1,d}-z)A_1\hat{v}_1\chi_{\mathcal{O}_{1,d}}\,dxdt+\alpha_2\int_0^T\int_{\mathcal{O}_2}(w_{2,d}-z)A_2\hat{v}_2\chi_{\mathcal{O}_{2,d}}\,dxdt\\
\leq&\alpha_1M_0\|w_{1,d}-z\|_{L^2(\mathcal{O}_{1,d}\times(0,T))}\|\hat{v}_1\|_{\mathcal{H}_1}+\alpha_2M_0\|w_{2,d}-z\|_{L^2(\mathcal{O}_{2,d}\times(0,T))}\|\hat{v}_2\|_{\mathcal{H}_2}\\
\leq&\left(\alpha_1M_0\|w_{1,d}-z\|_{L^2(\mathcal{O}_{1,d}\times(0,T))}+\alpha_2M_0\|w_{2,d}-z\|_{L^2(\mathcal{O}_{2,d}\times(0,T))}\right)\|(\hat{v}_1,\hat{v}_2)\|_{\mathcal{H}}\\
\leq&\left(\alpha_1M_0\|w_{1,d}\|_{L^2(\mathcal{O}_{1,d}\times(0,T))}+(\alpha_1+\alpha_2)M_0\|z\|_{L^2(Q)})+\alpha_2M_0\|w_{2,d}\|_{L^2(\mathcal{O}_{2,d}\times(0,T))}\right)\|(\hat{v}_1,\hat{v}_2)\|_{\mathcal{H}}.
\end{align*}
Employing energy methods, we conclude that there exists a positive constant $C,$ such that the solution of problem \eqref{eq2.5}satisfies the following inequality:
\begin{align*}
\|z\|_{L^2(Q)}\leq e^{C(1+\|a\|_{L^{\infty}(Q)}+\|B\|_{L^{\infty}(Q)}^2)T}(\|w_0\|_{L^2(\Omega)}+\|f\|_{L^2(\mathcal{O}\times(0,T))}).
\end{align*}
Thus, we infer from the Lax-Milgram Theorem that there exists a unique $(v_1,v_2)\in\mathcal{H},$ such that
\begin{align*}
\langle A(v_1,v_2),(\hat{v}_1,\hat{v}_2)\rangle=\langle B,(\hat{v}_1,\hat{v}_2)\rangle
\end{align*}
for any $(\hat{v}_1,\hat{v}_2)\in\mathcal{H}.$ Moreover, there exists a generic constant $C>0$ depending on $\|a\|_{L^{\infty}(Q)},$ $\|B\|_{L^{\infty}(Q)},$ $\alpha_i,$ $M_0$ and $T,$ such that
\begin{align*}
\|(v_1,v_2)\|_{\mathcal{H}}\leq C\left(\|w_0\|_{L^2(\Omega)}+\|w_{1,d}\|_{L^2(\mathcal{O}_{1,d}\times(0,T))}+\|w_{2,d}\|_{L^2(\mathcal{O}_{2,d}\times(0,T))}+\|f\|_{L^2(\mathcal{O}\times (0,T))}\right).
\end{align*}
\end{proof}
\subsubsection{Characterization of the Nash equilibrium}
In this subsection, for any fixed $f\in L^2(\mathcal{O}\times (0,T)),$ we will express the follower $(v_1(f),v_2(f))$ in terms of a new adjoint variable. To this purpose, let $f\in L^2(\mathcal{O}\times (0,T))$ be given, for any $(v_1,v_2)\in\mathcal{H},$ denote by $w$ the solution of problem \eqref{eq2.1}. In order to simplified equality \eqref{eq2.3}, it is natural to introduce the following adjoint system:
\begin{equation}\label{eq2.10}
\begin{cases}
-\frac{\partial \phi_i}{\partial t}+\Delta^2\phi_i+a(x,t)\phi_i-\nabla\cdot(B(x,t)\phi_i)=\alpha_i(w-w_{i,d})\chi_{\mathcal{O}_{i,d}},\,\,\,\,(x,t)\in Q,\\ 
\phi_i=\frac{\partial\phi_i}{\partial\vec{n}}=0,\,\,\,\,\,(x,t)\in\Sigma,\\
\phi_i(x,T)=0,\,\,\,\,\,x\in\Omega.
\end{cases}
\end{equation}
Thus, combining problem \eqref{eq2.4} with the adjoint system \eqref{eq2.10}, we conclude that $(v_1,v_2)$ is a Nash equilibrium if and only if
\begin{align}
\int_0^T\int_{\mathcal{O}_i}(\phi_i+\mu_iv_i)\hat{v}_i\,dxdt=0
\end{align}
for any $\hat{v}_i\in\mathcal{H}_i,$ which implies that
\begin{align}
v_i=-\frac{\phi_i}{\mu_i}|_{\mathcal{O}_i\times (0,T)}
\end{align}
for any $i=1,2.$ Therefore, we obtain
\begin{equation}\label{eq2.11}
\begin{cases}
\frac{\partial w}{\partial t}+\Delta^2w+a(x,t)w+B(x,t)\cdot\nabla w=f\chi_{\mathcal{O}}-\sum_{i=1}^2\frac{\phi_i}{\mu_i}\chi_{\mathcal{O}_i},\,\,\,\,(x,t)\in Q,\\
-\frac{\partial \phi_i}{\partial t}+\Delta^2\phi_i+a(x,t)\phi_i-\nabla\cdot(B(x,t)\phi_i)=\alpha_i(w-w_{i,d})\chi_{\mathcal{O}_{i,d}},\,\,\,\,(x,t)\in Q,\\ 
w=\frac{\partial w}{\partial\vec{n}}=0,\,\,\,\phi_i=\frac{\partial\phi_i}{\partial\vec{n}}=0,\,\,(x,t)\in\Sigma,\\
w(x,0)=w_0(x),\,\,\phi_i(x,T)=0,\,\,\,x\in\Omega.
\end{cases}
\end{equation}
In what follows, we will prove the well-posedness of problem \eqref{eq2.11} by Banach fixed points Theorem.
\begin{theorem}\label{th3.1}
Assume that $w_0\in L^2(\Omega)$ and $f\in L^2(\mathcal{O}\times(0,T)).$ If $\frac{\max\{\alpha_1,\alpha_2\}}{\min\{\mu_1,\mu_2\}}$ is sufficiently small, then problem \eqref{eq2.11} admits a unique weak solution $(w,\phi_1,\phi_2)\in X\times X\times X,$ where 
\begin{align*}
X=\{u\in L^2(0,T;H_0^2(\Omega)):u_t\in L^2(0,T;H^{-2}(\Omega))\}.
%Y=&\{u\in L^2(0,T;H_0^2(\Omega)\cap H^4(\Omega)):u_t\in L^2(0,T;L^2(\Omega))\}.
\end{align*}
Moreover, there exists a generic positive constant $C,$ such that
\begin{align*}
\|w\|_X\leq C\left(\|w_0\|_{L^2(\Omega)}+\|f\|_{L^2(\mathcal{O}\times (0,T))}+\sum_{i=1}^2\|w_{i,d}\|_{L^2(\mathcal{O}_{i,d}\times (0,T))}\right).
\end{align*}
\end{theorem}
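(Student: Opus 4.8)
The plan is to recast the coupled forward--backward optimality system \eqref{eq2.11} as a fixed point problem and to apply Banach's contraction principle on $L^2(Q)$, exploiting the smallness of $\tfrac{\max\{\alpha_1,\alpha_2\}}{\min\{\mu_1,\mu_2\}}$ to produce a genuine contraction. Concretely, I would define a map $\Lambda:L^2(Q)\to L^2(Q)$ as follows. Given $\tilde w\in L^2(Q)$, first solve the two decoupled backward problems
$$-\partial_t\phi_i+\Delta^2\phi_i+a\phi_i-\nabla\cdot(B\phi_i)=\alpha_i(\tilde w-w_{i,d})\chi_{\mathcal{O}_{i,d}},\qquad \phi_i(T)=0,$$
with homogeneous boundary data, and then solve the forward problem
$$\partial_t w+\Delta^2 w+aw+B\cdot\nabla w=f\chi_{\mathcal{O}}-\sum_{i=1}^2\tfrac{\phi_i}{\mu_i}\chi_{\mathcal{O}_i},\qquad w(0)=w_0,$$
setting $\Lambda(\tilde w)=w$. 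Since the sources belong to $L^2(Q)$, standard well-posedness of the (forward and time-reversed) fourth order parabolic equation guarantees $\phi_i,w\in X$, so $\Lambda$ is well defined and maps into $X\subset L^2(Q)$; a fixed point of $\Lambda$ is precisely a weak solution of \eqref{eq2.11}.

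The heart of the argument is a pair of energy estimates whose constants are independent of $\alpha_i$ and $\mu_i$. Testing each backward equation against $\phi_i$ and the forward equation against $w$, using the boundary conditions $w=\tfrac{\partial w}{\partial\vec n}=0$ to turn $\int\Delta^2 w\,w$ into $\int|\Delta w|^2$, bounding the drift term via $\|B\|_{L^\infty(Q)}$ and absorbing the gradient into the biharmonic dissipation through the interpolation $\|\nabla\psi\|_{L^2}^2\le\|\psi\|_{L^2}\|\Delta\psi\|_{L^2}$ and Young's inequality, and finally invoking Gronwall's inequality, I would obtain
$$\|\phi_i\|_{L^2(Q)}\le C\alpha_i\|\tilde w-w_{i,d}\|_{L^2(Q)},\qquad \|w\|_{L^2(Q)}\le C\Big(\|w_0\|_{L^2(\Omega)}+\|f\|_{L^2(\mathcal{O}\times(0,T))}+\sum_{i=1}^2\tfrac{1}{\mu_i}\|\phi_i\|_{L^2(Q)}\Big),$$
with $C=C(\|a\|_{L^\infty(Q)},\|B\|_{L^\infty(Q)},T)$ only. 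Applying these to the differences $\delta w=\Lambda(\tilde w^{1})-\Lambda(\tilde w^{2})$ and $\delta\phi_i$, for which the data $w_{i,d}$, $f$, $w_0$ cancel, yields
$$\|\Lambda(\tilde w^{1})-\Lambda(\tilde w^{2})\|_{L^2(Q)}\le 2C^2\,\frac{\max\{\alpha_1,\alpha_2\}}{\min\{\mu_1,\mu_2\}}\,\|\tilde w^{1}-\tilde w^{2}\|_{L^2(Q)}.$$
Hence, once $\tfrac{\max\{\alpha_1,\alpha_2\}}{\min\{\mu_1,\mu_2\}}<(2C^2)^{-1}$, $\Lambda$ is a contraction and Banach's fixed point theorem supplies the unique $w\in L^2(Q)$, and thereby the unique weak solution triplet $(w,\phi_1,\phi_2)$.

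For the quantitative bound I would apply the two energy estimates to the full system rather than to differences, substitute the $\phi_i$ estimate into the $w$ estimate to get
$$\|w\|_{L^2(Q)}\le C\Big(\|w_0\|_{L^2(\Omega)}+\|f\|_{L^2(\mathcal{O}\times(0,T))}+\sum_{i=1}^2\|w_{i,d}\|_{L^2(\mathcal{O}_{i,d}\times(0,T))}\Big)+2C^2\frac{\max\{\alpha_1,\alpha_2\}}{\min\{\mu_1,\mu_2\}}\|w\|_{L^2(Q)},$$
and absorb the last term using the same smallness condition. The $X$-estimate then follows because $w\in L^2(0,T;H_0^2(\Omega))$ from the energy identity, while the equation expresses $w_t=-\Delta^2 w-aw-B\cdot\nabla w+f\chi_{\mathcal{O}}-\sum_i\tfrac{\phi_i}{\mu_i}\chi_{\mathcal{O}_i}$ as a sum of terms each bounded in $L^2(0,T;H^{-2}(\Omega))$ by the already controlled quantities. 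The step I expect to be most delicate is keeping the energy constant $C$ strictly independent of $\alpha_i,\mu_i$ --- in particular treating the fourth order dissipation together with the first order drift $B\cdot\nabla$ so that the smallness of $\tfrac{\max\{\alpha_1,\alpha_2\}}{\min\{\mu_1,\mu_2\}}$ alone, and not hidden dependence through $C$, drives both the contraction and the final absorption.
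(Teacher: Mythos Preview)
Your proposal is correct and follows essentially the same route as the paper: define the map $\Lambda:L^2(Q)\to L^2(Q)$ by freezing $w$ in the backward $\phi_i$-equations, feeding the resulting $\phi_i$ into the forward $w$-equation, then derive energy bounds $\|\phi_i\|\le C\alpha_i\|\tilde w-w_{i,d}\|$ and $\|w\|\le C(\|w_0\|+\|f\|+\sum\mu_i^{-1}\|\phi_i\|)$ and use smallness of $\max\alpha_i/\min\mu_i$ to make $\Lambda$ a contraction in $L^2(Q)$. The paper simply invokes parabolic regularity for the two linear estimates where you spell out the test-function/Gronwall computation, but the structure of the argument and the final absorption step are identical.
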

\begin{proof}
Let $z\in L^2(Q)$ be given, consider the following problem 
\begin{equation}\label{eq3.1.1}
\begin{cases}
\frac{\partial w}{\partial t}+\Delta^2w+a(x,t)w+B(x,t)\cdot\nabla w=f\chi_{\mathcal{O}}-\sum_{i=1}^2\frac{\phi_i}{\mu_i}\chi_{\mathcal{O}_i},\,\,\,\,(x,t)\in Q,\\
-\frac{\partial \phi_i}{\partial t}+\Delta^2\phi_i+a(x,t)\phi_i-\nabla\cdot(B(x,t)\phi_i)=\alpha_i(z-w_{i,d})\chi_{\mathcal{O}_{i,d}},\,\,\,\,(x,t)\in Q,\\ 
w=\frac{\partial w}{\partial\vec{n}}=0,\,\,\,\phi_i=\frac{\partial\phi_i}{\partial\vec{n}}=0,\,\,(x,t)\in\Sigma,\\
w(x,0)=w_0(x),\,\,\phi_i(x,T)=0,\,\,\,x\in\Omega.
\end{cases}
\end{equation}
From the regularity theory of parabolic equations, we conclude that there exists a unique weak solution $(w^z,\phi^z_1,\phi^z_2)\in X\times X\times X.$  Moreover, there exists a positive constant $C,$ such that
\begin{align}\label{eq3.1.2}
\|\phi_i^z\|_X\leq C\alpha_i\|(z-w_{i,d})\chi_{\mathcal{O}_{i,d}}\|_{L^2(Q)}
\end{align}
and
\begin{align}\label{eq3.1.3}
\|w^z\|_X\leq C(\|w_0\|_{L^2(\Omega)}+\|f\chi_{\mathcal{O}}-\sum_{i=1}^2\frac{\phi_i}{\mu_i}\chi_{\mathcal{O}_i}\|_{L^2(Q)}).
\end{align}
Consequently, it follows from inequalities \eqref{eq3.1.2}-\eqref{eq3.1.3} that there exists a generic positive constant $\mathcal{L}_1\geq 1,$ such that
\begin{align}\label{eq3.1.4}
\|w^z\|_X\leq \mathcal{L}_1\left(\|w_0\|_{L^2(\Omega)}+\|f\|_{L^2(\mathcal{O}\times (0,T))}+\sum_{i=1}^2\frac{\alpha_i}{\mu_i}\|(z-w_{i,d})\chi_{\mathcal{O}_{i,d}}\|_{L^2(Q)}\right).
\end{align}
Define the operator $\Lambda:L^2(Q)\rightarrow L^2(Q)$ by
\begin{align*}
\Lambda(z)=w^z,
\end{align*}
then the mapping $\Lambda$ is well-defined. Moreover, we deduce from inequality \eqref{eq3.1.4} that for any $z_1,$ $z_2\in L^2(Q),$ 
\begin{align*}
\|\Lambda(z_1)-\Lambda(z_2)\|_{L^2(Q)}\leq \mathcal{L}_1\sum_{i=1}^2\frac{\alpha_i}{\mu_i}\|z_1-z_2\|_{L^2(Q)}.
\end{align*}
Thus, if $\frac{\max\{\alpha_1,\alpha_2\}}{\min\{\mu_1,\mu_2\}}$ is sufficiently small such that $\mathcal{L}_1\frac{\max\{\alpha_1,\alpha_2\}}{\min\{\mu_1,\mu_2\}}\leq\frac{1}{4},$ the mapping $\Lambda$ is contractive  and possesses a unique fixed point $w\in L^2(Q).$ Moreover, we have
\begin{align}\label{eq3.1.5}
\|w\|_X\leq 2\mathcal{L}_1\left(\|w_0\|_{L^2(\Omega)}+\|f\|_{L^2(\mathcal{O}\times (0,T))}+\sum_{i=1}^2\|w_{i,d}\|_{L^2(\mathcal{O}_{i,d}\times(0,T))}\right).
\end{align}
\end{proof}
\subsection{Null controllability}
The main aim of this subsection is to establish the null controllability of $w$ at $t=T,$ i.e., we will find a distributed control $f\in L^2(\mathcal{O}\times (0,T)),$ such that the solution of problem \eqref{eq2.11} satisfies $w(x,T)\equiv 0$ for any $x\in\Omega.$ To this purpose, we will establish an observability inequality for the adjoint problem of the linearized system of problem \eqref{eq2.11}:
\begin{equation}\label{eq2.12}
\begin{cases}
-\frac{\partial \psi}{\partial t}+\Delta^2\psi+a(x,t)\psi-\nabla\cdot(B(x,t)\psi)=\sum_{i=1}^2\alpha_i\eta_i\chi_{\mathcal{O}_{i,d}},\,\,\,\,(x,t)\in Q,\\ 
\frac{\partial \eta_i}{\partial t}+\Delta^2\eta_i+a(x,t)\eta_i+B(x,t)\cdot\nabla \eta_i=-\frac{\psi}{\mu_i}\chi_{\mathcal{O}_i},\,\,\,\,(x,t)\in Q,\\
\psi=\frac{\partial \psi}{\partial\vec{n}}=0,\,\,\,\eta_i=\frac{\partial\eta_i}{\partial\vec{n}}=0,\,\,(x,t)\in\Sigma,\\
\psi(x,T)=\psi_0(x),\,\,\eta_i(x,0)=0,\,\,\,x\in\Omega.
\end{cases}
\end{equation}
Thus, we will prove the following results.
\begin{proposition}\label{pro2.2}
Assume that $\mathcal{O}_{i,d}\cap \mathcal{O}\neq \emptyset$ for $i=1,2$ and $\frac{\max\{\alpha_1,\alpha_2\}}{\min\{\mu_1,\mu_2\}}$ is sufficiently small.
\begin{enumerate}[(1)]
\item If
\begin{align}\label{1.8}
\mathcal{O}_{1,d}=\mathcal{O}_{2,d}=:\mathcal{O}_d,\,\,\,\zeta_{1,d}=\zeta_{2,d}.
\end{align}
%  and 
%\begin{align*}
%\int_0^T\int_{\mathcal{O}_d}\theta(x,t)^{-2}(|w_{1,d}(x,t)|^2+|w_{2,d}(x,t)|^2)\,dxdt<+\infty
%\end{align*}
Then there exist a generic positive constant $C,$ such that for any $\psi_0\in L^2(\Omega),$ the solution $(\psi,\eta_1,\eta_2)$ of problem \eqref{eq2.12} satisfies
\begin{align*}
\|\psi(0)\|_{L^2(\Omega)}^2+\int_0^T\int_{\mathcal{O}_d}\theta(x,t)^2|\alpha_1\eta_1+\alpha_2\eta_2|^2\,dxdt\leq C\int_0^T\int_{\mathcal{O}}|\psi|^2\,dxdt
\end{align*}
for some weight function $\theta=\tilde{\xi}^3e^{s\tilde{\alpha}},$ where $\tilde{\xi},$ $\tilde{\alpha}$ are defined in \eqref{a}.
\item If
\begin{align}\label{1.9}
\mathcal{O}_{1,d}\cap \mathcal{O}\neq \mathcal{O}_{2,d}\cap \mathcal{O}.
\end{align}
%and
%\begin{align*}
%\int_0^T\int_{\mathcal{O}_{1,d}}\theta(x,t)^{-2}|w_{1,d}(x,t)|^2\,dxdt+\int_0^T\int_{\mathcal{O}_{2,d}}\theta(x,t)^{-2}|w_{2,d}(x,t)|^2\,dxdt<+\infty
%\end{align*}
 Then there exist a generic positive constant $C,$ such that for any $\psi_0\in L^2(\Omega),$ the solution $(\psi,\eta_1,\eta_2)$ of problem \eqref{eq2.12} satisfies
\begin{align*}
\|\psi(0)\|_{L^2(\Omega)}^2+\sum_{i=1}^2\int_0^T\int_{\mathcal{O}_{i,d}}\theta(x,t)^2|\eta_i|^2\,dxdt\leq C\int_0^T\int_{\mathcal{O}}|\psi|^2\,dxdt
\end{align*}
for some weight function $\theta=\min\{\tilde{\xi}_1^3e^{s\tilde{\alpha}_1},\tilde{\xi}_2^3e^{s\tilde{\alpha}_2}\},$ where $\tilde{\xi}_i,$ $\tilde{\alpha}_i$ are defined in \eqref{b}.
\end{enumerate}
\end{proposition}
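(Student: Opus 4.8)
The plan is to derive both inequalities from the global Carleman estimates of Lemma \ref{2.4} and Lemma \ref{2.5}, applied separately to the backward equation for $\psi$ and to the two forward equations for $\eta_1,\eta_2$, and then to close the resulting chain of estimates by exploiting the coupling through the equations themselves. Since Lemma \ref{2.4} and Lemma \ref{2.5} are stated for operators of the form $-\partial_t+\Delta^2$, I would first reverse time in the $\eta_i$-equations, setting $\bar\eta_i(x,t)=\eta_i(x,T-t)$; because the weights $\alpha,\xi$ in \eqref{2.2} are invariant under $t\mapsto T-t$, the estimates apply verbatim. For $\psi$ I would use Lemma \ref{2.5} with $F_0=\sum_i\alpha_i\eta_i\chi_{\mathcal{O}_{i,d}}-a\psi$ and $F_1=B\psi$, so that the first-order term $-\nabla\cdot(B\psi)$ is absorbed through the divergence slot; for each $\eta_i$ I would use Lemma \ref{2.4} with right-hand side $-\tfrac{\psi}{\mu_i}\chi_{\mathcal{O}_i}-a\eta_i-B\cdot\nabla\eta_i$. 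In case (1), the identities $\mathcal{O}_{1,d}=\mathcal{O}_{2,d}=\mathcal{O}_d$ and $\zeta_{1,d}=\zeta_{2,d}$ mean that only the combination $\eta:=\alpha_1\eta_1+\alpha_2\eta_2$ enters the source of the $\psi$-equation, which reduces the system to the $2\times2$ cascade for $(\psi,\eta)$; this is exactly why the conclusion in (1) controls $|\alpha_1\eta_1+\alpha_2\eta_2|^2$, whereas in case (2) the geometric separation $\mathcal{O}_{1,d}\cap\mathcal{O}\neq\mathcal{O}_{2,d}\cap\mathcal{O}$ lets one isolate each $\eta_i$.

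Writing $I(\cdot)$ for the full weighted left-hand side of the Carleman inequalities, I would next absorb all lower-order contributions into $I(\psi)$ and $I(\eta)$ (resp.\ $I(\eta_i)$): the terms $\int_Q|a\psi|^2e^{2s\alpha}$, $\int_Q s^2\lambda^2\xi^2|B\psi|^2e^{2s\alpha}$, $\int_Q|a\eta|^2e^{2s\alpha}$ and $\int_Q|B\cdot\nabla\eta|^2e^{2s\alpha}$ are dominated by the leading $s^6\lambda^8\xi^6$- and $s^4\lambda^6\xi^4$-weighted terms once $\lambda$ and $s$ are large, since $\xi$ is bounded below on $Q$. The genuinely coupled source terms, namely $\int_Q|\eta|^2\chi_{\mathcal{O}_d}e^{2s\alpha}$ in the $\psi$-estimate and $\big(\tfrac{\alpha_1}{\mu_1}+\tfrac{\alpha_2}{\mu_2}\big)^2\int_Q|\psi|^2e^{2s\alpha}$ in the $\eta$-estimate, would then be absorbed into $I(\eta)$ and $I(\psi)$ respectively: each gains a factor $(s^6\lambda^8\xi^6)^{-1}$ and the smallness of $\tfrac{\max\{\alpha_1,\alpha_2\}}{\min\{\mu_1,\mu_2\}}$ guarantees that the feedback loop between the two estimates is contractive, so that after summation one reaches
\begin{align*}
I(\psi)+I(\eta)\leq C\int_{Q_\omega}s^7\lambda^8\xi^7|\psi|^2e^{2s\alpha}\,dxdt+C\int_{Q_\omega}s^7\lambda^8\xi^7|\eta|^2e^{2s\alpha}\,dxdt,
\end{align*}
with $\omega$ an arbitrary subdomain entering through the weight of Lemma \ref{2.1}.

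The decisive step, and the one I expect to be hardest, is the elimination of the local integral of $\eta$ on the right-hand side; this is where the fourth-order nature of the problem bites and where the energy methods advertised in the abstract enter. I would choose $\omega\subset\subset\mathcal{O}\cap\mathcal{O}_d$ (in case (2), two separated subdomains $\omega_i\subset\subset\mathcal{O}\cap\mathcal{O}_{i,d}$ on which only the $i$-th follower contributes to the source), fix a cutoff $\rho\in\mathcal{C}_c^\infty(\mathcal{O}\cap\mathcal{O}_d)$ equal to $1$ on $\omega$, and use the $\psi$-equation to replace $\eta$ on $\operatorname{supp}\rho$ by $-\partial_t\psi+\Delta^2\psi+a\psi-\nabla\cdot(B\psi)$. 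Substituting this into $\int_Q\rho^2 s^7\lambda^8\xi^7e^{2s\alpha}|\eta|^2\,dxdt$ and integrating by parts, one must move the time derivative and the bi-Laplacian off $\psi$; the point is to distribute the four spatial derivatives symmetrically (an integration by parts leaving $\Delta$ acting on $\eta$ and $\Delta$ acting on $\psi$, rather than $\Delta^2$ on either factor) and to use the $\eta$-equation to rewrite $\partial_t\eta$ in the time-integration-by-parts, so that only derivatives of order at most two of $\eta$ and of $\psi$ appear, precisely the quantities controlled by $I(\eta)$ (through its $s^3\lambda^4\xi^3|\Delta\eta|^2$ and $s^2\lambda^4\xi^2|\nabla^2\eta|^2$ terms) and by $I(\psi)$. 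A Young inequality then yields a bound of the form $\varepsilon\big(I(\psi)+I(\eta)\big)+C_\varepsilon\int_{Q_{\omega'}}|\psi|^2\,dxdt$ over a slightly larger $\omega'\subset\subset\mathcal{O}$, and choosing $\varepsilon$ small absorbs the first term on the left. The main technical care lies in checking that all weight powers produced by differentiating $\rho^2 s^7\lambda^8\xi^7e^{2s\alpha}$ remain compatible with the ones available in $I(\psi)$ and $I(\eta)$.

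At this stage one has $I(\psi)+I(\eta)\leq C\int_{Q_\mathcal{O}}|\psi|^2\,dxdt$, using that $s^7\lambda^8\xi^7e^{2s\alpha}$ is bounded on $Q$ for fixed $s,\lambda$ and that $\omega'\subset\mathcal{O}$. It remains to produce the two terms on the left of the desired inequalities. The observation term follows at once, since $\theta^2=\tilde\xi^6e^{2s\tilde\alpha}$ is, by the definition of the de-singularized weights in \eqref{a}, dominated on $\mathcal{O}_d\times(0,T)$ by $Cs^6\lambda^8\xi^6e^{2s\alpha}$, whence $\int_{Q_{\mathcal{O}_d}}\theta^2|\eta|^2\,dxdt\leq CI(\eta)$ (and analogously $\sum_i\int_{Q_{\mathcal{O}_{i,d}}}\theta^2|\eta_i|^2\,dxdt\leq C\sum_iI(\eta_i)$ in case (2)). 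Finally, the initial term $\|\psi(0)\|_{L^2(\Omega)}^2$ is recovered by a standard energy argument for the backward equation satisfied by $\psi$: multiplying by $\psi$, integrating over $\Omega\times(t,3T/4)$ and using Gronwall together with the Carleman bound on the middle time interval transports the $L^2$-information down to $t=0$, the weights $\tilde\alpha,\tilde\xi$ being introduced precisely to bypass the degeneracy of $e^{2s\alpha}$ as $t\to0$. Combining the three bounds gives the stated observability inequalities; case (2) is identical once each $\eta_i$ has been isolated locally through the geometric hypothesis \eqref{1.9}.
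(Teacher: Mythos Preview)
Your overall architecture is right, and the elimination of the local $\eta$-integral via the $\psi$-equation plus cutoff is exactly the key step. There is, however, a genuine gap in how you recover the weighted observation term $\int_{Q_{\mathcal{O}_d}}\theta^2|\eta|^2\,dxdt$, and relatedly you misplace the role of the smallness hypothesis.

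You claim that $\theta^2=\tilde\xi^6e^{2s\tilde\alpha}$ is dominated on $\mathcal{O}_d\times(0,T)$ by $Cs^6\lambda^8\xi^6e^{2s\alpha}$, so that the observation term is controlled directly by $I(\eta)$. This is false near $t=0$: by construction $\ell(t)=T/2$ on $[0,T/2]$, so $\tilde\alpha,\tilde\xi$ are bounded there, whereas $\alpha\to-\infty$ and $\xi^6e^{2s\alpha}\to0$ as $t\to0^+$. Thus $I(\eta)$ gives no control of $\int_0^{T/2}\int_\Omega\tilde\xi^6e^{2s\tilde\alpha}|\eta|^2$. The paper closes this gap by a \emph{separate} energy estimate for $h=\alpha_1\eta_1+\alpha_2\eta_2$ on $[0,T/2]$: from the forward equation one obtains
\[
\int_0^{T/2}\|h(t)\|_{L^2(\Omega)}^2\,dt\le\Big(\tfrac{\alpha_1}{\mu_1}+\tfrac{\alpha_2}{\mu_2}\Big)^2e^{CT}\int_0^{T/2}\|\psi(t)\|_{L^2(\Omega)}^2\,dt,
\]
and it is \emph{here}, not at the Carleman stage, that the smallness of $\tfrac{\max\{\alpha_1,\alpha_2\}}{\min\{\mu_1,\mu_2\}}$ is invoked to absorb this back into the $\psi$-energy on $[0,T/2]$. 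At the Carleman stage the coupled source terms $\int_Q|h|^2e^{2s\alpha}$ and $\int_Q|\psi|^2e^{2s\alpha}$ are swallowed by $I(h)$ and $I(\psi)$ simply by taking $s$ large; no smallness is needed there.

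Your treatment of Case~(2) is also too compressed. With a single Carleman weight based on one window $\omega$, the local term for $\eta_1$ is an integral over all of $\omega$, but the $\psi$-equation only recovers $\alpha_1\eta_1$ on $\omega\cap(\mathcal{O}_{1,d}\setminus\mathcal{O}_{2,d})$; on the remainder of $\omega$ you cannot separate $\eta_1$ from $\eta_2$. The paper handles this by introducing two distinct weight systems $(\alpha_i,\xi_i)$ adapted to $\omega_i\subset\mathcal{O}_{i,d}\cap\mathcal{O}$, and for $\psi$ a cutoff $\hat\zeta$ vanishing on a neighbourhood $\tilde{\mathcal{O}}\supset\omega_1\cup\omega_2$, so that the Carleman estimate applied to $\hat\psi=\hat\zeta\psi$ throws all uncontrolled terms into $\mathcal{O}$. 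The geometric dichotomy \eqref{eq2.2.5}--\eqref{eq2.2.6} then decides whether one eliminates each $\eta_i$ separately or the combination $h$ on $\omega_1$ together with $\eta_2$ on $\omega_2$; this is substantially more than ``identical to Case~(1)''.
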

\begin{proof}
{\bf Case 1: $\mathcal{O}_{1,d}=\mathcal{O}_{2,d}=\mathcal{O}_d.$} Since $\mathcal{O}_d\cap \mathcal{O}\neq\emptyset,$ there exists a non-empty open set $\omega\subset\subset\mathcal{O}_d\cap \mathcal{O}.$ From Lemma \ref{2.1}, we conclude that  there exists a function $\eta\in\mathcal{C}^4(\overline{\Omega})$ such that
\begin{align*}
\eta(x)>0,\,\,\,\,\forall\,\,\,x\in \Omega;\,\,\eta(x)=0,\,\,\,\,\forall\,\,\,x\in\Gamma;\,\,|\nabla\eta(x)|>0,\,\,\,\,\forall\,\,\,x\in\overline{\Omega\backslash\omega}.
\end{align*}
Let $\ell\in\mathcal{C}^{\infty}([0,T])$ be a function satisfying
\begin{equation*}
\ell(t)=
\begin{cases}
\frac{T}{2},\,\,\,\,\forall\,\,\,\,t\in[0,\frac{T}{2}],\\
\sqrt{t(T-t)},\,\,\,\,\,\forall\,\,\,\,\,t\in[\frac{T}{2},T]
\end{cases}
\end{equation*}
and let us introduce the following weight functions:
\begin{align}\label{a}
&\tilde{\alpha}(x,t)=\frac{e^{\lambda(2\|\eta\|_{L^{\infty}(\Omega)}+\eta(x))}-e^{4\lambda\|\eta\|_{L^{\infty}(\Omega)}}}{\ell(t)},\tilde{\xi}(x,t)=\frac{e^{\lambda(2\|\eta\|_{L^{\infty}(\Omega)}+\eta(x))}}{\ell(t)}.
\end{align}
Denote by $h=\alpha_1\eta_1+\alpha_2\eta_2,$ then we conclude from Lemma \ref{2.4} and Lemma \ref{2.5} that there exists $\hat{\lambda}>0$ such that for an arbitrary $\lambda\geq \hat{\lambda},$ we can choose $s_0=s_0(\lambda)>0$ satisfying: there exists a constant $C=C(\lambda)>0$ independent of $s,$ such that for any $s\geq s_0(\lambda)(\sqrt{T}+T),$ one has
\begin{align*}
&\int_Qe^{2s\alpha}\left(s^6\lambda^8\xi^6|\psi|^2+s^4\lambda^6\xi^4|\nabla \psi|^2+s^3\lambda^4\xi^3|\Delta \psi|^2+s^2\lambda^4\xi^2|\nabla^2\psi|^2+s\lambda^2\xi|\nabla\Delta \psi|^2\right)\,dxdt\\
\leq &C\left(\int_{Q_\omega}s^7\lambda^8\xi^7|\psi|^2e^{2s\alpha}\,dxdt+\int_Q(|h\chi_{\mathcal{O}_d}-a\psi|^2+s^2\lambda^2\xi^2|B\psi|^2)e^{2s\alpha}\,dxdt\right)\\
\leq&C\left(\int_{Q_\omega}s^7\lambda^8\xi^7|\psi|^2e^{2s\alpha}\,dxdt+\int_Q(|h|^2+\|a\|_{L^{\infty}(Q)}^2|\psi|^2+s^2\lambda^2\xi^2\|B\|_{L^{\infty}(Q)}^2|\psi|^2)e^{2s\alpha}\,dxdt\right)
\end{align*}
and
\begin{align*}
&\int_Qe^{2s\alpha}\left(s^6\lambda^8\xi^6|h|^2+s^4\lambda^6\xi^4|\nabla h|^2+s^3\lambda^4\xi^3|\Delta h|^2+s^2\lambda^4\xi^2|\nabla^2h|^2+s\lambda^2\xi|\nabla\Delta h|^2\right)\,dxdt\\
&\leq C\left(\int_{Q_\omega}s^7\lambda^8\xi^7|h|^2e^{2s\alpha}\,dxdt+\int_Q|-\frac{\alpha_1}{\mu_1}\chi_{\mathcal{O}_1}\psi-\frac{\alpha_2}{\mu_2}\chi_{\mathcal{O}_2}\psi-ah-B\cdot\nabla h|^2e^{2s\alpha}\,dxdt\right)\\
&\leq C\left(\int_{Q_\omega}s^7\lambda^8\xi^7|h|^2e^{2s\alpha}\,dxdt+\int_Q(|\psi|^2+\|a\|_{L^{\infty}(Q)}^2|h|^2+\|B\|_{L^{\infty}(Q)}^2|\nabla h|^2)e^{2s\alpha}\,dxdt\right),
\end{align*}
 which implies that there exists $\hat{\lambda}>0$ such that for an arbitrary $\lambda\geq \hat{\lambda}(1+\|a\|_{L^{\infty}(Q)}^{\frac{1}{4}}+\|B\|_{L^{\infty}(Q)}^{\frac{1}{3}}),$ we can choose $s_0=s_0(\lambda)>0$ satisfying: there exists a constant $C=C(\lambda)>0$ independent of $s,$ such that for any $s\geq s_0(\lambda)(\sqrt{T}+T),$ one has
\begin{align}\label{eq2.2.1}
\nonumber I(\psi)=&\int_Qe^{2s\alpha}\left(s^6\lambda^8\xi^6|\psi|^2+s^4\lambda^6\xi^4|\nabla \psi|^2+s^3\lambda^4\xi^3|\Delta \psi|^2+s^2\lambda^4\xi^2|\nabla^2\psi|^2+s\lambda^2\xi|\nabla\Delta \psi|^2\right)\,dxdt\\
\leq &C\left(\int_{Q_\omega}s^7\lambda^8\xi^7|\psi|^2e^{2s\alpha}\,dxdt+\int_Q|h|^2e^{2s\alpha}\,dxdt\right)
\end{align}
and
\begin{align}\label{eq2.2.2}
\nonumber I(h)=&\int_Qe^{2s\alpha}\left(s^6\lambda^8\xi^6|h|^2+s^4\lambda^6\xi^4|\nabla h|^2+s^3\lambda^4\xi^3|\Delta h|^2+s^2\lambda^4\xi^2|\nabla^2h|^2+s\lambda^2\xi|\nabla\Delta h|^2\right)\,dxdt\\
\leq &C\left(\int_{Q_\omega}s^7\lambda^8\xi^7|h|^2e^{2s\alpha}\,dxdt+\int_Q|\psi|^2e^{2s\alpha}\,dxdt\right).
\end{align}
Since $\omega\subset\subset\mathcal{O}_d,$ we obtain
\begin{align*}
h=-\frac{\partial \psi}{\partial t}+\Delta^2\psi+a(x,t)\psi-\nabla\cdot(B(x,t)\psi).
\end{align*}
Let $\omega_1$ be an open set such that $\omega\subset\subset\omega_1\subset\subset\mathcal{O}_d\cap \mathcal{O}$ and $\zeta\in C_c^{\infty}(\omega_1)$ is a cut-off function satisfying
\begin{align*}
\zeta(x)\equiv 1,\,\,\,\,\forall\,\,\,x\in\omega;\,\,\,0\leq \zeta(x)\leq 1,\,\,\,\forall\,\,\,x\in\omega_1.
\end{align*}
From the definition of $h,$ we conclude that for any $\lambda\geq \hat{\lambda}(1+\|a\|_{L^{\infty}(Q)}^{\frac{1}{4}}+\|B\|_{L^{\infty}(Q)}^{\frac{1}{3}})$ and $s\geq s_0(\lambda)(T+\sqrt{T}),$
\begin{align}\label{eq2.2.3}
\nonumber&\int_{Q_\omega}s^7\lambda^8\xi^7|h|^2e^{2s\alpha}\,dxdt\leq\int_{Q_{\omega_1}}\zeta s^7\lambda^8\xi^7h\left(-\frac{\partial \psi}{\partial t}+\Delta^2\psi+a(x,t)\psi-\nabla\cdot(B(x,t)\psi)\right)e^{2s\alpha}\,dxdt\\
\nonumber\leq&C\int_{Q_{\omega_1}}\|B\|_{L^{\infty}(Q)}^2s^8\lambda^9\xi^8|h||\psi|e^{2s\alpha}+s^9\lambda^{10}\xi^9|\Delta h||\psi|e^{2s\alpha}\,dxdt\\
\nonumber&+C\int_{Q_{\omega_1}}s^8\lambda^9\xi^8|\nabla\Delta h||\psi|e^{2s\alpha}+s^{11}\lambda^{12}\xi^{11}|h||\psi|e^{2s\alpha}+s^{10}\lambda^{11}\xi^{10}|\nabla h||\psi|e^{2s\alpha}\,dxdt\\
\nonumber&+C\int_{Q_{\omega_1}}s^9\lambda^{10}\xi^9|\nabla^2h||\psi|e^{2s\alpha}+s^8\lambda^8\xi^{10}|\psi||h|e^{2s\alpha}\,dxdt\\
\leq&\epsilon I(h)+C_\epsilon\int_{Q_{\omega_1}}s^{16}\lambda^{16}\xi^{16}|\psi|^2e^{2s\alpha}\,dxdt
\end{align} 
for any sufficiently small $\epsilon>0.$

It follows from inequalities \eqref{eq2.2.1}-\eqref{eq2.2.3} that
\begin{align}\label{eq2.2.4}
I(\psi)+I(h)\leq C\int_{Q_{\omega_1}}s^{16}\lambda^{16}\xi^{16}|\psi|^2e^{2s\alpha}\,dxdt.
\end{align}
Taking the inner product of the first equation of problem \eqref{eq2.12} with $\psi,$ we obtain
\begin{align*}
-\frac{1}{2}\frac{d}{dt}\|\psi(t)\|_{L^2(\Omega)}^2+\|\Delta\psi(t)\|_{L^2(\Omega)}^2\\
=\int_\Omega (h\chi_{\mathcal{O}_d}-a\psi+\nabla\cdot(B\psi))\psi\,dx.
\end{align*}
For any $t\in[0,\frac{T}{2}]$ and $r\in[\frac{T}{4},\frac{3T}{4}],$ applying H\"{o}lder's inequality to yields
\begin{align*}
&\|\psi(t)\|_{L^2(\Omega)}^2+2\int_t^r\|\Delta\psi(\tau)\|_{L^2(\Omega)}^2\,d\tau\\
=&\|\psi(r)\|_{L^2(\Omega)}^2+2\int_t^r\int_\Omega (h\chi_{\mathcal{O}_d}-a\psi+\nabla\cdot(B\psi))\psi\,dxd\tau\\
\leq&\|\psi(r)\|_{L^2(\Omega)}^2+\int_t^r\int_\Omega |h|^2+(1+2\|a\|_{L^{\infty}(Q)}+\|B
\|_{L^{\infty}(Q)}^2)|\psi|^2\,dxd\tau+\frac{1}{4}\int_t^r\|\Delta\psi(\tau)\|_{L^2(\Omega)}^2\,d\tau.
\end{align*}
In particular, it follows from the classical Gronwall inequality that for any $t\in[0,\frac{T}{2}]$ and any $r\in[\frac{T}{4},\frac{3T}{4}],$
\begin{align*}
\|\psi(t)\|_{L^2(\Omega)}^2\leq\left(\|\psi(t+\frac{T}{4})\|_{L^2(\Omega)}^2+\int_t^{t+\frac{T}{4}}\int_\Omega |h(x,r)|^2\,dxdr \right)e^{(1+2\|a\|_{L^{\infty}(Q)}+\|B\|_{L^{\infty}(Q)}^2)\frac{T}{4}}
\end{align*}
and
\begin{align*}
\|\psi(0)\|_{L^2(\Omega)}^2\leq\left(\|\psi(r)\|_{L^2(\Omega)}^2+\int_0^r\int_\Omega |h(x,\tau)|^2\,dxd\tau \right)e^{(1+2\|a\|_{L^{\infty}(Q)}+\|B\|_{L^{\infty}(Q)}^2)\frac{3T}{4}}.
\end{align*}
Denote by $\beta=1+2\|a\|_{L^{\infty}(Q)}+\|B\|_{L^{\infty}(Q)}^2,$ integrating the above inequalities over $[0,\frac{T}{2}]$ and $[\frac{T}{4},\frac{3T}{4}],$ respectively, we obtain
\begin{align*}
\int_0^{\frac{T}{2}}\|\psi(t)\|_{L^2(\Omega)}^2\,dt\leq&\left(\int_{\frac{T}{4}}^{\frac{3T}{4}}\|\psi(t)\|_{L^2(\Omega)}^2\,dt+\int_0^{\frac{T}{2}}\int_t^{t+\frac{T}{4}}\int_\Omega |h(x,r)|^2\,dxdrds \right)e^{\frac{\beta T}{4}}\\
\leq&\left(\int_{\frac{T}{4}}^{\frac{3T}{4}}\|\psi(t)\|_{L^2(\Omega)}^2\,dt+\frac{T}{2}\int_0^{\frac{3T}{4}}\int_\Omega |h(x,t)|^2\,dxdt \right)e^{\frac{\beta T}{4}}
\end{align*}
and
\begin{align*}
\|\psi(0)\|_{L^2(\Omega)}^2\leq&\frac{2}{T}\left(\int_{\frac{T}{4}}^{\frac{3T}{4}}\|\psi(t)\|_{L^2(\Omega)}^2\,dt+\int_{\frac{T}{4}}^{\frac{3T}{4}}\int_0^r\int_\Omega |h(x,\tau)|^2\,dxd\tau \right)e^{\frac{3\beta T}{4}}\\
\leq&\frac{2}{T}\left(\int_{\frac{T}{4}}^{\frac{3T}{4}}\|\psi(t)\|_{L^2(\Omega)}^2\,dt+\frac{T}{2}\int_0^{\frac{3T}{4}}\int_\Omega |h(x,t)|^2\,dxdt \right)e^{\frac{3\beta T}{4}}.
\end{align*}
Thus, we obtain
\begin{align*}
&\|\psi(0)\|_{L^2(\Omega)}^2+\int_0^{\frac{T}{2}}\|\psi(t)\|_{L^2(\Omega)}^2\,dt\\
\leq&(1+\frac{2}{T})\left(\int_{\frac{T}{4}}^{\frac{3T}{4}}\|\psi(t)\|_{L^2(\Omega)}^2\,dt+\int_0^{\frac{3T}{4}}\int_\Omega |h(x,t)|^2\,dxdt \right)e^{\frac{3\beta T}{4}},
\end{align*}
which implies that there exists a positive constant $\mathcal{K}_1,$ such that for any $\lambda\geq \hat{\lambda}(1+\|a\|_{L^{\infty}(Q)}^{\frac{1}{4}}+\|B\|_{L^{\infty}(Q)}^{\frac{1}{3}})$ and $s\geq s_0(\lambda)(T+\sqrt{T}),$
\begin{align*}
&\|\psi(0)\|_{L^2(\Omega)}^2+\int_0^{\frac{T}{2}}\int_\Omega\tilde{\xi}^6e^{2s\tilde{\alpha}}|\psi(x,t)|^2\,dxdt\\
\leq&\mathcal{K}_1\left(\int_{\frac{T}{4}}^{\frac{3T}{4}}\int_\Omega\xi^6e^{2s\alpha}|\psi(x,t)|^2\,dxdt+\int_{\frac{T}{2}}^{\frac{3T}{4}}\int_\Omega\xi^6e^{2s\alpha}|h(x,t)|^2\,dxdt+\int_0^{\frac{T}{2}}\|h(t)\|_{L^2(\Omega)}^2\,dt\right)\\
\leq&\mathcal{K}_1\left(\int_Q\xi^6e^{2s\alpha}|\psi(x,t)|^2\,dxdt+\int_Q\xi^6e^{2s\alpha}|h(x,t)|^2\,dxdt+\int_0^{\frac{T}{2}}\|h(t)\|_{L^2(\Omega)}^2\,dt \right).
\end{align*}
Similarly, we obtain
\begin{align*}
\int_0^{\frac{T}{2}}\|h(t)\|_{L^2(\Omega)}^2\,dt\leq(\frac{\alpha_1}{\mu_1}+\frac{\alpha_2}{\mu_2})^2e^{(\beta+1)\frac{T}{2}}\int_0^{\frac{T}{2}}\|\psi(t)\|_{L^2(\Omega)}^2\,dt.
\end{align*}
Therefore, we conclude that there exists a positive constant $\mathcal{K}_2,$ such that
\begin{align*}
&\|\psi(0)\|_{L^2(\Omega)}^2+\int_0^{\frac{T}{2}}\int_\Omega\tilde{\xi}^6e^{2s\tilde{\alpha}}|\psi(x,t)|^2\,dxdt+\int_0^{\frac{T}{2}}\int_\Omega\tilde{\xi}^6e^{2s\tilde{\alpha}}|h(x,t)|^2\,dxdt\\
\leq&\mathcal{K}_2\left(\int_Q\xi^6e^{2s\alpha}|\psi(x,t)|^2\,dxdt+\int_Q\xi^6e^{2s\alpha}|h(x,t)|^2\,dxdt+(\frac{\alpha_1}{\mu_1}+\frac{\alpha_2}{\mu_2})^2\int_0^{\frac{T}{2}}\int_\Omega\tilde{\xi}^6e^{2s\tilde{\alpha}}|\psi(x,t)|^2\,dxdt \right),
\end{align*}
which entails that if $\frac{\alpha_1}{\mu_1}+\frac{\alpha_2}{\mu_2}$ is sufficiently small such that 
$(\frac{\alpha_1}{\mu_1}+\frac{\alpha_2}{\mu_2})^2\mathcal{K}_2\leq\frac{1}{2},$ we obtain
\begin{align*}
&\|\psi(0)\|_{L^2(\Omega)}^2+\int_0^{\frac{T}{2}}\int_\Omega\tilde{\xi}^6e^{2s\tilde{\alpha}}|\psi(x,t)|^2\,dxdt+\int_0^{\frac{T}{2}}\int_\Omega\tilde{\xi}^6e^{2s\tilde{\alpha}}|h(x,t)|^2\,dxdt\\
\leq&2\mathcal{K}_2\left(\int_Q\xi^6e^{2s\alpha}|\psi(x,t)|^2\,dxdt+\int_Q\xi^6e^{2s\alpha}|h(x,t)|^2\,dxdt\right).
\end{align*}
Since $\tilde{\xi}(x,t)=\xi(x,t)$ and $\tilde{\alpha}(x,t)=\alpha(x,t)$ for any $(x,t)\in\Omega\times (\frac{T}{2},T),$ we deduce from inequality \eqref{eq2.2.4} that there exists a positive constant $\mathcal{K}_3,$ such that for any $\lambda\geq \hat{\lambda}(1+\|a\|_{L^{\infty}(Q)}^{\frac{1}{4}}+\|B\|_{L^{\infty}(Q)}^{\frac{1}{3}})$ and $s\geq s_0(\lambda)(T+\sqrt{T}),$
\begin{align*}
&\int_{\frac{T}{2}}^T\int_\Omega\tilde{\xi}^6e^{2s\tilde{\alpha}}|\psi(x,t)|^2\,dxdt+\int_{\frac{T}{2}}^T\int_\Omega\tilde{\xi}^6e^{2s\tilde{\alpha}}|h(x,t)|^2\,dxdt\\
\leq&\mathcal{K}_3\int_{Q_{\omega_1}}\tilde{\xi}^{16}|\psi|^2e^{2s\tilde{\alpha}}\,dxdt.
\end{align*}
Collecting the above last two inequalities, we conclude that there exists a positive constant $\mathcal{K}_4,$ such that for any $\lambda\geq \hat{\lambda}(1+\|a\|_{L^{\infty}(Q)}^{\frac{1}{4}}+\|B\|_{L^{\infty}(Q)}^{\frac{1}{3}})$ and $s\geq s_0(\lambda)(T+\sqrt{T}),$
\begin{align}\label{eq2.2.20}
\nonumber&\|\psi(0)\|_{L^2(\Omega)}^2+\int_0^T\int_\Omega\tilde{\xi}^6e^{2s\tilde{\alpha}}|\psi(x,t)|^2\,dxdt+\int_0^T\int_\Omega\tilde{\xi}^6e^{2s\tilde{\alpha}}|h(x,t)|^2\,dxdt\\
\nonumber\leq&\mathcal{K}_4\int_{Q_{\omega_1}}\tilde{\xi}^{16}|\psi|^2e^{2s\tilde{\alpha}}\,dxdt\\
\leq&\mathcal{K}_4\int_0^T\int_{\mathcal{O}}\tilde{\xi}^{16}|\psi|^2e^{2s\tilde{\alpha}}\,dxdt.
\end{align}

{\bf Case 2: $\mathcal{O}_{1,d}\cap\mathcal{O}\neq\mathcal{O}_{2,d}\cap\mathcal{O}.$}

Let $\tilde{\mathcal{O}}\subset\subset\mathcal{O}$ be a nonempty open set such that $\mathcal{O}_{i,d}\cap\tilde{\mathcal{O}}\neq\emptyset$ for $i=1,2.$ Thus, there exist nonempty open sets $\omega_i\subset\subset \mathcal{O}_{i,d}\cap\tilde{\mathcal{O}}$ $(i=1,2)$ with $\omega_1\cap \omega_2=\emptyset.$ Moreover, one of the following conditions is satisfied:
\begin{align}\label{eq2.2.5}
\omega_1\cap\mathcal{O}_{2,d}=\omega_2\cap\mathcal{O}_{1,d}=\emptyset
\end{align}
or
\begin{align}\label{eq2.2.6}
\omega_i\subset\mathcal{O}_{j,d},\,\,\,\omega_j\cap\mathcal{O}_{i,d}=\emptyset\,\,\,\,\textit{with}\,\,\,(i,j)=(1,2)\,\,\,\textit{or}\,\,\,(i,j)=(2,1).
\end{align}
From Lemma \ref{2.1}, we conclude that  there exist two functions $\eta^{(1)},$ $\eta^{(2)}\in\mathcal{C}^4(\overline{\Omega})$ such that
\begin{align*}
&\eta^{(i)}(x)>0,\,\,\,\,\forall\,\,\,x\in \Omega;\,\,\eta^{(i)}(x)=0,\,\,\,\,\forall\,\,\,x\in\Gamma;\,\,|\nabla\eta^{(i)}(x)|>0,\,\,\,\,\forall\,\,\,x\in\overline{\Omega\backslash\omega_i};\\
&\eta^{(1)}(x)=\eta^{(2)}(x),\,\,\,\,\forall\,\,\,x\in \Omega\backslash\tilde{\mathcal{O}};\,\,\|\eta^{(1)}\|_{\mathcal{C}(\bar{\Omega})}=\|\eta^{(2)}\|_{\mathcal{C}(\bar{\Omega})}.
\end{align*}
Let us introduce the following weight functions:
\begin{align*}
\alpha_i(x,t)=\frac{e^{\lambda(2\|\eta^{(i)}\|_{\mathcal{C}(\bar{\Omega})}+\eta^{(i)}(x))}-e^{4\lambda\|\eta^{(i)}\|_{\mathcal{C}(\bar{\Omega})}}}{\sqrt{t(T-t)}},\,\,\,\,\,\xi_i(x,t)=\frac{e^{\lambda(2\|\eta^{(i)}\|_{\mathcal{C}(\bar{\Omega})}+\eta^{(i)}(x))}}{\sqrt{t(T-t)}}
\end{align*}
and
\begin{align}\label{b}
\tilde{\alpha}_i(x,t)=\frac{e^{\lambda(2\|\eta^{(i)}\|_{\mathcal{C}(\bar{\Omega})}+\eta^{(i)}(x))}-e^{4\lambda\|\eta^{(i)}\|_{\mathcal{C}(\bar{\Omega})}}}{\ell(t)},\,\,\,\,\,\tilde{\xi}_i(x,t)=\frac{e^{\lambda(2\|\eta^{(i)}\|_{\mathcal{C}(\bar{\Omega})}+\eta^{(i)}(x))}}{\ell(t)}.
\end{align}
From Lemma \ref{2.4}, we deduce that there exists $\hat{\lambda}>0$ such that for an arbitrary $\lambda\geq \hat{\lambda},$ we can choose $s_0=s_0(\lambda)>0$ satisfying: there exists a constant $C=C(\lambda)>0$ independent of $s,$ such that for any $s\geq s_0(\lambda)(\sqrt{T}+T),$ one has
\begin{align*}
&\int_Qe^{2s\alpha_i}\left(s^6\lambda^8\xi_i^6|\eta_i|^2+s^4\lambda^6\xi_i^4|\nabla \eta_i|^2+s^3\lambda^4\xi_i^3|\Delta \eta_i|^2+s^2\lambda^4\xi_i^2|\nabla^2\eta_i|^2+s\lambda^2\xi_i|\nabla\Delta \eta_i|^2\right)\,dxdt\\
&\leq C\left(\int_{Q_{\omega_i}}s^7\lambda^8\xi_i^7|\eta_i|^2e^{2s\alpha_i}\,dxdt+\int_Q|-\frac{1}{\mu_i}\chi_{\mathcal{O}_i}\psi-a\eta_i-B\cdot\nabla \eta_i|^2e^{2s\alpha_i}\,dxdt\right)\\
&\leq C\left(\int_{Q_{\omega_i}}s^7\lambda^8\xi_i^7|\eta_i|^2e^{2s\alpha_i}\,dxdt+\int_Q(|\psi|^2+\|a\|_{L^{\infty}(Q)}^2|\eta_i|^2+\|B\|_{L^{\infty}(Q)}^2|\nabla \eta_i|^2)e^{2s\alpha_i}\,dxdt\right),
\end{align*}
%and
%\begin{align*}
%&\int_Qe^{2s\alpha_i}\left(s^6\lambda^8\xi_i^6|\psi|^2+s^4\lambda^6\xi_i^4|\nabla \psi|^2+s^3\lambda^4\xi_i^3|\Delta \psi|^2+s^2\lambda^4\xi_i^2|\nabla^2\psi|^2+s\lambda^2\xi_i|\nabla\Delta \psi|^2\right)\,dxdt\\
%\leq &C\left(\int_{Q_{\omega_i}}s^7\lambda^8\xi_i^7|\psi|^2e^{2s\alpha_i}\,dxdt+\int_Q(|\alpha_1\eta_1\chi_{\mathcal{O}_{1,d}}+\alpha_2\eta_2\chi_{\mathcal{O}_{2,d}}-a\psi|^2+s^2\lambda^2\xi_i^2|B\psi|^2)e^{2s\alpha_i}\,dxdt\right)\\
%\leq&C\left(\int_{Q_{\omega_i}}s^7\lambda^8\xi_i^7|\psi|^2e^{2s\alpha_i}\,dxdt+\int_Q(|\eta_1|^2+|\eta_2|^2+\|a\|_{L^{\infty}(Q)}^2|\psi|^2+s^2\lambda^2\xi_i^2\|B\|_{L^{\infty}(Q)}^2|\psi|^2)e^{2s\alpha_i}\,dxdt\right),
%\end{align*}
 which implies that there exists $\hat{\lambda}>0$ such that for an arbitrary $\lambda\geq \hat{\lambda}(1+\|a\|_{L^{\infty}(Q)}^{\frac{1}{4}}+\|B\|_{L^{\infty}(Q)}^{\frac{1}{3}}),$ we can choose $s_0=s_0(\lambda)>0$ satisfying: there exists a constant $C=C(\lambda)>0$ independent of $s,$ such that for any $s\geq s_0(\lambda)(\sqrt{T}+T),$ one has
\begin{align}\label{eq2.2.7}
\nonumber I^{(i)}(\eta_i)=&\int_Qe^{2s\alpha_i}\left(s^6\lambda^8\xi_i^6|\eta_i|^2+s^4\lambda^6\xi_i^4|\nabla \eta_i|^2+s^3\lambda^4\xi_i^3|\Delta \eta_i|^2+s^2\lambda^4\xi_i^2|\nabla^2\eta_i|^2+s\lambda^2\xi_i|\nabla\Delta \eta_i|^2\right)\,dxdt\\
\leq &C\left(\int_{Q_{\omega_i}}s^7\lambda^8\xi_i^7|\eta_i|^2e^{2s\alpha_i}\,dxdt+\int_Q|\psi|^2e^{2s\alpha_i}\,dxdt\right).
\end{align}
%and
%\begin{align}\label{eq2.2.8}
%\nonumber I^{(i)}(\psi)=&\int_Qe^{2s\alpha_i}\left(s^6\lambda^8\xi_i^6|\psi|^2+s^4\lambda^6\xi_i^4|\nabla \psi|^2+s^3\lambda^4\xi_i^3|\Delta \psi|^2+s^2\lambda^4\xi_i^2|\nabla^2\psi|^2+s\lambda^2\xi_i|\nabla\Delta \psi|^2\right)\,dxdt\\
%\leq &C\left(\int_{Q_{\omega_i}}s^7\lambda^8\xi_i^7|\psi|^2e^{2s\alpha_i}\,dxdt+\int_Q(|\eta_1|^2+|\eta_2|^2)e^{2s\alpha_i}\,dxdt\right).
%\end{align}
%Therefore, we conclude from \eqref{eq2.2.7}-\eqref{eq2.2.8} and the definition of $\eta^{(1)},$ $\eta^{(2)}$ that for any $\lambda\geq \hat{\lambda}(1+\|a\|_{L^{\infty}(Q)}^{\frac{1}{4}}+\|B\|_{L^{\infty}(Q)}^{\frac{1}{3}})$ and any $s\geq s_0(\lambda)(\sqrt{T}+T),$ one has
%\begin{align}\label{eq2.2.9}
%&I^{(1)}(\psi)+I^{(1)}(\eta_1)+I^{(2)}(\eta_2)\\
%\leq&C\left(\int_{Q_{\omega_1}}s^7\lambda^8\xi_1^7(|\eta_1|^2+|\psi|^2)e^{2s\alpha_1}\,dxdt+\int_{Q_{\omega_2}}s^7\lambda^8\xi_2^7|\eta_2|^2e^{2s\alpha_2}\,dxdt+\int_0^T\int_\mathcal{O}|\psi|^2e^{2s\alpha_2}\,dxdt\right)
%\end{align}
Let $\hat{\zeta}\in\mathcal{C}^{\infty}(\overline{\Omega})$ be a cut-off function satisfying
\begin{align*}
0\leq\hat{\zeta}(x)\leq 1,\,\,\,\forall\,\,\,x\in\overline{\Omega};\,\,\,\hat{\zeta}(x)\equiv 0,\,\,\,\forall\,\,\,x\in\tilde{\mathcal{O}};\,\,\,\hat{\zeta}(x)\equiv 1,\,\,\,\forall\,\,\,x\in\Omega\backslash\overline{\mathcal{O}}.
\end{align*}
Denote by $\hat{\psi}=\hat{\zeta}\psi,$ then it is clear that $\hat{\psi}$ is the solution of problem
\begin{equation}\label{eq2.2.8}
\begin{cases}
-\frac{\partial \hat{\psi}}{\partial t}+\Delta^2\hat{\psi}+a(x,t)\hat{\psi}-\nabla\cdot(B(x,t)\hat{\psi})=\sum_{i=1}^2\alpha_i\hat{\zeta}\eta_i\chi_{\mathcal{O}_{i,d}}-B\cdot\nabla\hat{\zeta}\psi+2\Delta\hat{\zeta}\Delta\psi\\
+4\nabla\hat{\zeta}\cdot\nabla\Delta\psi+4\nabla\Delta\hat{\zeta}\cdot\nabla\psi+\Delta^2\hat{\zeta}\psi+4\nabla^2\hat{\zeta}:\nabla^2\psi,\,\,\,\,(x,t)\in Q,\\ 
\hat{\psi}=\frac{\partial \hat{\psi}}{\partial\vec{n}}=0,\,\,(x,t)\in\Sigma,\\
\hat{\psi}(x,0)=\zeta\psi_0(x),\,\,\,x\in\Omega.
\end{cases}
\end{equation}
From Lemma \ref{2.5}, we deduce that there exists $\hat{\lambda}>0$ such that for an arbitrary $\lambda\geq \hat{\lambda},$ we can choose $s_0=s_0(\lambda)>0$ satisfying: there exists a constant $C=C(\lambda)>0$ independent of $s,$ such that for any $s\geq s_0(\lambda)(\sqrt{T}+T),$ one has
\begin{align*}
&\int_Qe^{2s\alpha_1}\left(s^6\lambda^8\xi_1^6|\hat{\psi}|^2+s^4\lambda^6\xi_1^4|\nabla \hat{\psi}|^2+s^3\lambda^4\xi_1^3|\Delta \hat{\psi}|^2+s^2\lambda^4\xi_1^2|\nabla^2\hat{\psi}|^2+s\lambda^2\xi_1|\nabla\Delta \hat{\psi}|^2\right)\,dxdt\\
\leq &C\left(\int_{Q_{\omega_1}}s^7\lambda^8\xi_1^7|\hat{\psi}|^2e^{2s\alpha_1}\,dxdt+\int_Q(|\hat{\zeta}\alpha_1\eta_1\chi_{\mathcal{O}_{1,d}}+\hat{\zeta}\alpha_2\eta_2\chi_{\mathcal{O}_{2,d}}-a\hat{\psi}|^2+s^2\lambda^2\xi_1^2|B\hat{\psi}|^2)e^{2s\alpha_1}\,dxdt\right)\\
&+C\int_Q\left(1+\|B\|_{L^{\infty}(Q)}^2+s^2\lambda^2\xi_1^2+s^4\lambda^4\xi_1^4+s^6\lambda^6\xi_1^6\right)|\psi|^2e^{2s\alpha_1}\,dxdt\\
\leq &C\left(\int_{Q_{\omega_1}}s^7\lambda^8\xi_1^7|\hat{\psi}|^2e^{2s\alpha_1}\,dxdt+\int_Q(|\eta_1|^2+|\hat{\zeta}\eta_2|^2+\|a\|_{L^{\infty}(Q)}^2|\hat{\psi}|^2+s^2\lambda^2\xi_1^2\|B\|_{L^{\infty}(Q)}^2|\hat{\psi}|^2)e^{2s\alpha_1}\,dxdt\right)\\
&+C\int_Q\left(1+\|B\|_{L^{\infty}(Q)}^2+s^2\lambda^2\xi_1^2+s^4\lambda^4\xi_1^4+s^6\lambda^6\xi_1^6\right)|\psi|^2e^{2s\alpha_1}\,dxdt,
\end{align*}
which implies that there exists $\hat{\lambda}>0$ such that for an arbitrary $\lambda\geq \hat{\lambda}(1+\|a\|_{L^{\infty}(Q)}^{\frac{1}{4}}+\|B\|_{L^{\infty}(Q)}^{\frac{1}{3}}),$ we can choose $s_0=s_0(\lambda)>0$ satisfying: there exists a constant $C=C(\lambda)>0$ independent of $s,$ such that for any $s\geq s_0(\lambda)(\sqrt{T}+T),$ one has
\begin{align*}
&\int_Qe^{2s\alpha_1}\left(s^6\lambda^8\xi_1^6|\hat{\psi}|^2+s^4\lambda^6\xi_1^4|\nabla \hat{\psi}|^2+s^3\lambda^4\xi_1^3|\Delta \hat{\psi}|^2+s^2\lambda^4\xi_1^2|\nabla^2\hat{\psi}|^2+s\lambda^2\xi_1|\nabla\Delta \hat{\psi}|^2\right)\,dxdt\\
\leq &C\left(\int_{Q_{\omega_1}}s^7\lambda^8\xi_1^7|\hat{\psi}|^2e^{2s\alpha_1}\,dxdt+\int_Q(|\eta_1|^2+|\hat{\zeta}\eta_2|^2)e^{2s\alpha_1}\,dxdt\right)+C\int_Qs^6\lambda^6\xi_1^6|\psi|^2e^{2s\alpha_1}\,dxdt.
\end{align*}
It follows from the definition of $\hat{\zeta}$ that for any $\lambda\geq \hat{\lambda}(1+\|a\|_{L^{\infty}(Q)}^{\frac{1}{4}}+\|B\|_{L^{\infty}(Q)}^{\frac{1}{3}})$ and any $s\geq s_0(\lambda)(\sqrt{T}+T),$
\begin{align}\label{eq2.2.9}
\nonumber&\int_Qe^{2s\alpha_1}\left(s^6\lambda^8\xi_1^6|\psi|^2+s^4\lambda^6\xi_1^4|\nabla \hat{\psi}|^2+s^3\lambda^4\xi_1^3|\Delta \hat{\psi}|^2+s^2\lambda^4\xi_1^2|\nabla^2\hat{\psi}|^2+s\lambda^2\xi_1|\nabla\Delta \hat{\psi}|^2\right)\,dxdt\\
\nonumber\leq &C\left(\int_{Q_{\omega_1}}s^7\lambda^8\xi_1^7|\hat{\psi}|^2e^{2s\alpha_1}\,dxdt+\int_Q(|\hat{\zeta}\eta_1|^2+|\hat{\zeta}\eta_2|^2)e^{2s\alpha_1}\,dxdt\right)\\
&+C\int_0^T\int_{\mathcal{O}}s^6\lambda^8\xi_1^6|\psi|^2e^{2s\alpha_1}\,dxdt.
\end{align}
Thus, we conclude from inequality \eqref{eq2.2.7} and inequality \eqref{eq2.2.9} that for any $\lambda\geq \hat{\lambda}(1+\|a\|_{L^{\infty}(Q)}^{\frac{1}{4}}+\|B\|_{L^{\infty}(Q)}^{\frac{1}{3}})$ and any $s\geq s_0(\lambda)(\sqrt{T}+T),$
\begin{align}\label{eq2.2.10}
\nonumber &I^{(1)}(\eta_1)+I^{(2)}(\eta_2)+\int_Qs^6\lambda^8\xi_1^6|\psi|^2e^{2s\alpha_1}\,dxdt\\
\nonumber\leq &C\left(\int_{Q_{\omega_1}}s^7\lambda^8\xi_1^7|\eta_1|^2e^{2s\alpha_1}\,dxdt+\int_{Q_{\omega_2}}s^7\lambda^8\xi_2^7|\eta_2|^2e^{2s\alpha_2}\,dxdt\right)\\
&+C\left(\int_0^T\int_{\mathcal{O}}s^7\lambda^8\xi_1^7|\psi|^2e^{2s\alpha_1}\,dxdt+\int_0^T\int_{\mathcal{O}}s^7\lambda^8\xi_2^7|\psi|^2e^{2s\alpha_2}\,dxdt\right).
\end{align}
\begin{enumerate}
\item Assume that \eqref{eq2.2.5} holds. In order to eliminate the first two terms of the right hand side of inequality \eqref{eq2.2.10}, we introduce two new open sets $\tilde{\omega}_i\subset\subset\Omega$ with $\omega_i\subset\subset\tilde{\omega}_i\subset\subset\mathcal{O}_{i,d}\cap\tilde{\mathcal{O}}$ and $\tilde{\omega}_i\cap\mathcal{O}_{3-i,d}=\emptyset$ as well as the corresponding cut-off function $\delta_i\in\mathcal{C}_c^{\infty}(\tilde{\omega}_i)$ satisfying
\begin{align*}
0\leq\delta_i(x)\leq 1,\,\,\,\forall\,\,x\in\tilde{\omega}_i;\,\,\,\delta_i(x)\equiv 1,\,\,\,\forall\,\,\,x\in\omega_i.
\end{align*}
By carrying out the similar proof of inequality \eqref{eq2.2.3}, we obtain
\begin{align}\label{eq2.2.11}
\int_{Q_{\omega_i}}s^7\lambda^8\xi_i^7|\eta_i|^2e^{2s\alpha_i}\,dxdt
\leq\epsilon I^{(i)}(\eta_i)+C_\epsilon\int_{Q_{\tilde{\omega}_i}}s^{16}\lambda^{16}\xi_i^{16}|\psi|^2e^{2s\alpha_i}\,dxdt
\end{align} 
for any sufficiently small $\epsilon>0.$

We infer from inequalities \eqref{eq2.2.10}-\eqref{eq2.2.11} that for any $\lambda\geq \hat{\lambda}(1+\|a\|_{L^{\infty}(Q)}^{\frac{1}{4}}+\|B\|_{L^{\infty}(Q)}^{\frac{1}{3}})$ and any $s\geq s_0(\lambda)(\sqrt{T}+T),$
\begin{align}\label{eq2.2.12}
\nonumber &I^{(1)}(\eta_1)+I^{(2)}(\eta_2)+\int_Qs^6\lambda^8\xi_1^6|\psi|^2e^{2s\alpha_1}\,dxdt\\
\leq &C\left(\int_0^T\int_{\mathcal{O}}s^{16}\lambda^{16}\xi_1^{16}|\psi|^2e^{2s\alpha_1}\,dxdt+\int_0^T\int_{\mathcal{O}}s^{16}\lambda^{16}\xi_2^{16}|\psi|^2e^{2s\alpha_2}\,dxdt\right).
\end{align}
\item Assume that \eqref{eq2.2.6} holds. Without loss of generality, we can assume that $(i,j)=(1,2).$
At this point, denote by $h=\alpha_1\eta_1+\alpha_2\eta_2,$ it is clear that $h$ satisfies the following equation
\begin{align*}
h=-\frac{\partial \psi}{\partial t}+\Delta^2\psi+a(x,t)\psi-\nabla\cdot(B(x,t)\psi),\,\,\forall\,\,(x,t)\in Q_{\omega_1}.
\end{align*}
We conclude from Lemma \ref{2.5} that there exists $\hat{\lambda}>0$ such that for an arbitrary $\lambda\geq \hat{\lambda}(1+\|a\|_{L^{\infty}(Q)}^{\frac{1}{4}}+\|B\|_{L^{\infty}(Q)}^{\frac{1}{4}}),$ we can choose $s_0=s_0(\lambda)>0$ satisfying: there exists a constant $C=C(\lambda)>0$ independent of $s,$ such that for any $s\geq s_0(\lambda)(\sqrt{T}+T),$ one has
\begin{align}\label{eq2.2.13}
\nonumber I^{(1)}(h)=&\int_Qe^{2s\alpha_1}\left(s^6\lambda^8\xi_1^6|h|^2+s^4\lambda^6\xi_1^4|\nabla h|^2+s^3\lambda^4\xi_1^3|\Delta h|^2+s^2\lambda^4\xi_1^2|\nabla^2h|^2+s\lambda^2\xi_1|\nabla\Delta h|^2\right)\,dxdt\\
\leq &C\left(\int_{Q_{\omega_1}}s^7\lambda^8\xi_1^7|h|^2e^{2s\alpha_1}\,dxdt+\int_Q|\psi|^2e^{2s\alpha_1}\,dxdt\right).
\end{align}
It follows from inequality \eqref{eq2.2.7} and inequality \eqref{eq2.2.9} that
\begin{align}\label{eq2.2.14}
\nonumber I^{(2)}(\eta_2)=&\int_Qe^{2s\alpha_2}\left(s^6\lambda^8\xi_2^6|\eta_2|^2+s^4\lambda^6\xi_2^4|\nabla \eta_2|^2+s^3\lambda^4\xi_2^3|\Delta \eta_2|^2+s^2\lambda^4\xi_2^2|\nabla^2\eta_2|^2+s\lambda^2\xi_2|\nabla\Delta \eta_2|^2\right)\,dxdt\\
\leq &C\left(\int_{Q_{\omega_2}}s^7\lambda^8\xi_2^7|\eta_2|^2e^{2s\alpha_2}\,dxdt+\int_Q|\psi|^2e^{2s\alpha_2}\,dxdt\right)
\end{align}
and
\begin{align}\label{eq2.2.15}
\nonumber\int_Qs^6\lambda^8\xi_1^6|\psi|^2e^{2s\alpha_1}\,dxdt\leq &C\int_Q(|\eta_2|^2e^{2s\alpha_2}+|h|^2e^{2s\alpha_1})\,dxdt\\
&+C\int_0^T\int_{\mathcal{O}}s^7\lambda^8\xi_1^7|\psi|^2e^{2s\alpha_1}\,dxdt.
\end{align}
Therefore, we infer from inequalities \eqref{eq2.2.13}-\eqref{eq2.2.15} that for any $\lambda\geq \hat{\lambda}(1+\|a\|_{L^{\infty}(Q)}^{\frac{1}{4}}+\|B\|_{L^{\infty}(Q)}^{\frac{1}{3}})$ and any $s\geq s_0(\lambda)(\sqrt{T}+T),$
\begin{align}\label{eq2.2.16}
\nonumber&I^{(1)}(h)+I^{(2)}(\eta_2)+\int_Qs^6\lambda^8\xi_1^6|\psi|^2e^{2s\alpha_1}\,dxdt\\
\nonumber\leq&C\left(\int_{Q_{\omega_1}}s^7\lambda^8\xi_1^7|h|^2e^{2s\alpha_1}\,dxdt+\int_{Q_{\omega_2}}s^7\lambda^8\xi_2^7|\eta_2|^2e^{2s\alpha_2}\,dxdt\right)\\
&+C\left(\int_0^T\int_{\mathcal{O}}s^7\lambda^8\xi_1^7|\psi|^2e^{2s\alpha_1}\,dxdt+\int_0^T\int_{\mathcal{O}}s^7\lambda^8\xi_2^7|\psi|^2e^{2s\alpha_2}\,dxdt\right).
\end{align}
Choosing $\tilde{\omega}_1\subset\subset\Omega$ with $\omega_1\subset\subset\tilde{\omega}_1\subset\subset\mathcal{O}_{1,d}\cap\tilde{\mathcal{O}}$ and let  $\delta_2\in\mathcal{C}_c^{\infty}(\tilde{\omega}_1)$ be a cut-off function satisfying
\begin{align*}
0\leq\delta_2(x)\leq 1,\,\,\,\forall\,\,x\in\tilde{\omega}_1;\,\,\,\delta_2(x)\equiv 1,\,\,\,\forall\,\,\,x\in\omega_1.
\end{align*}
By performing the similar proof of inequality \eqref{eq2.2.3}, we obtain
\begin{align}\label{eq2.2.17}
\int_{Q_{\omega_1}}s^7\lambda^8\xi_1^7|h|^2e^{2s\alpha_1}\,dxdt
\leq\epsilon I^{(1)}(h)+C_\epsilon\int_{Q_{\tilde{\omega}_1}}s^{16}\lambda^{16}\xi_1^{16}|\psi|^2e^{2s\alpha_1}\,dxdt
\end{align} 
and
\begin{align}\label{eq2.2.18}
\int_{Q_{\omega_2}}s^7\lambda^8\xi_2^7|\eta_2|^2e^{2s\alpha_2}\,dxdt
\leq\epsilon I^{(2)}(\eta_2)+C_\epsilon\int_{Q_{\tilde{\omega}_2}}s^{16}\lambda^{16}\xi_2^{16}|\psi|^2e^{2s\alpha_2}\,dxdt
\end{align}
for any sufficiently small $\epsilon>0.$

Thus, we obtain
\begin{align}\label{eq2.2.19}
\nonumber&I^{(1)}(h)+I^{(2)}(\eta_2)+\int_Qs^6\lambda^8\xi_1^6|\psi|^2e^{2s\alpha_1}\,dxdt\\
\leq &C\left(\int_0^T\int_{\mathcal{O}}s^{16}\lambda^{16}\xi_1^{16}|\psi|^2e^{2s\alpha_1}\,dxdt+\int_0^T\int_{\mathcal{O}}s^{16}\lambda^{16}\xi_2^{16}|\psi|^2e^{2s\alpha_2}\,dxdt\right).
\end{align}
\end{enumerate}
By carrying out the similar proof of inequality \eqref{eq2.2.20}, we can also obtain
\begin{align*}
\|\psi(0)\|_{L^2(\Omega)}^2+\sum_{i=1}^2\int_Q\tilde{\xi}_i^6|\eta_i|^2e^{2s\tilde{\alpha}_i}\,dxdt\leq C\int_0^T\int_{\mathcal{O}}|\psi|^2\,dxdt.
\end{align*}
\end{proof}
In what follows, we will consider problem \eqref{1.1} with $F\equiv 0$ and prove the following result.
\begin{theorem}\label{th1.1}
Assume that $\mathcal{O}_{i,d}\cap \mathcal{O}\neq \emptyset$ for $i=1,2$ and either \eqref{1.8} or \eqref{1.9} holds, $F\equiv 0,$ the ratio $\frac{\min\{\mu_1,\mu_2\}}{\max\{\alpha_1,\alpha_2\}}$ is sufficiently large. If $\bar{u}$ is the unique solution of problem \eqref{1.5} with the initial state $\bar{u}(0)=\bar{u}_0\in L^2(\Omega)$ satisfying 
\begin{align}\label{1.10}
\sum_{i=1}^2\int_0^T\int_{\mathcal{O}_{i,d}}\theta(t)^{-2}|\bar{u}(x,t)-\zeta_{i,d}(x,t)|^2\,dxdt<+\infty,
\end{align}
where the weight function $\theta(x,t)$ is the same as in Proposition \ref{pro2.2}. Then for any $u_0\in L^2(\Omega),$ there exist a unique control $f\in L^2(\mathcal{O}\times (0,T))$ and an associated Nash equilibrium $(v_1,v_2),$ such that the solution of problem \eqref{1.1} satisfies \eqref{1.7}. Moreover, there exists a generic positive constant $C,$ such that
\begin{align*}
\int_0^T\int_{\mathcal{O}}|f|^2\,dxdt\leq C\left(\|u_0-\bar{u}_0\|_{L^2(\Omega)}^2+\sum_{i=1}^2\int_0^T\int_{\mathcal{O}_{i,d}}\theta^{-2}(x,t)|\bar{u}(x,t)-\zeta_{i,d}(x,t)|^2\,dxdt\right).
\end{align*}
\end{theorem}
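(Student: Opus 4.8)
The plan is to exploit the linearity already used to pass from \eqref{1.1} to the optimality system \eqref{eq2.11}, reduce the exact controllability \eqref{1.7} to the null controllability $w(\cdot,T)\equiv 0$ for \eqref{eq2.11}, and then build the minimal–norm leader $f$ by a penalized duality (Hilbert Uniqueness Method) argument whose coercivity is furnished by the observability inequality of Proposition \ref{pro2.2}. Throughout, the well-posedness of the forward–backward system \eqref{eq2.11} (in $X\times X\times X$) is guaranteed by Theorem \ref{th3.1}, which is where the largeness of $\min\{\mu_1,\mu_2\}/\max\{\alpha_1,\alpha_2\}$ enters.

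First I would fix a terminal datum $\psi_0\in L^2(\Omega)$ for the adjoint system \eqref{eq2.12} and derive the fundamental duality identity. Multiplying the $w$–equation of \eqref{eq2.11} by $\psi$ and each $\phi_i$–equation by $\eta_i$, integrating over $Q$ and integrating by parts in space and time, the boundary and initial/terminal conditions together with the equations satisfied by $(\psi,\eta_1,\eta_2)$ make the coupling terms $\sum_i\alpha_i\int_0^T\int_{\mathcal{O}_{i,d}}w\,\eta_i$ cancel, leaving
\[
\int_\Omega w(T)\psi_0\,dx=\int_\Omega w_0\,\psi(0)\,dx+\int_0^T\int_{\mathcal{O}}f\psi\,dxdt-\sum_{i=1}^2\alpha_i\int_0^T\int_{\mathcal{O}_{i,d}}w_{i,d}\,\eta_i\,dxdt.
\]
Hence $w(T)\equiv 0$ is equivalent to requiring, for every $\psi_0\in L^2(\Omega)$, that $\int_0^T\int_{\mathcal{O}}f\psi\,dxdt=-\int_\Omega w_0\,\psi(0)\,dx+\sum_{i}\alpha_i\int_0^T\int_{\mathcal{O}_{i,d}}w_{i,d}\,\eta_i\,dxdt$.

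To produce such an $f$, I would introduce for $\epsilon>0$ the strictly convex functional
\[
J_\epsilon(\psi_0)=\frac{1}{2}\int_0^T\int_{\mathcal{O}}|\psi|^2\,dxdt+\epsilon\|\psi_0\|_{L^2(\Omega)}+\int_\Omega w_0\,\psi(0)\,dx-\sum_{i=1}^2\alpha_i\int_0^T\int_{\mathcal{O}_{i,d}}w_{i,d}\,\eta_i\,dxdt
\]
on $L^2(\Omega)$, where $(\psi,\eta_1,\eta_2)$ solves \eqref{eq2.12} with terminal datum $\psi_0$. The linear part is continuous \emph{exactly} because of hypothesis \eqref{1.10}: writing $w_{i,d}=\zeta_{i,d}-\bar{u}$ and inserting the weight $\theta$, Cauchy--Schwarz bounds the coupling term by $\sum_i\alpha_i\bigl(\int\theta^{-2}|w_{i,d}|^2\bigr)^{1/2}\bigl(\int\theta^2|\eta_i|^2\bigr)^{1/2}$, and Proposition \ref{pro2.2} controls $\int\theta^2|\eta_i|^2$ by $C\int_0^T\int_{\mathcal{O}}|\psi|^2$ — in Case~1 via the combination $\alpha_1\eta_1+\alpha_2\eta_2$ (consistent with $w_{1,d}=w_{2,d}$ under \eqref{1.8}), in Case~2 via the individual $\eta_i$. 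The same observability inequality yields coercivity, so $J_\epsilon$ has a unique minimizer $\psi_0^\epsilon$, and its Euler--Lagrange equation identifies the candidate control $f_\epsilon=\psi^\epsilon\chi_{\mathcal{O}}$, for which the duality identity forces $\|w_\epsilon(T)\|_{L^2(\Omega)}\le\epsilon$.

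Finally I would derive $\epsilon$–uniform bounds on $f_\epsilon$ from the coercivity constant and the estimate of the linear term, pass to the weak limit $\epsilon\to0$ to obtain $f\in L^2(\mathcal{O}\times(0,T))$ with $w(T)\equiv0$, and read off the announced estimate
\[
\int_0^T\int_{\mathcal{O}}|f|^2\,dxdt\le C\Bigl(\|u_0-\bar{u}_0\|_{L^2(\Omega)}^2+\sum_{i=1}^2\int_0^T\int_{\mathcal{O}_{i,d}}\theta^{-2}|\bar{u}-\zeta_{i,d}|^2\,dxdt\Bigr);
\]
selecting the limit as the minimal $L^2(\mathcal{O})$–norm control gives uniqueness and \eqref{1.6}, while the associated Nash equilibrium is recovered through $v_i=-\phi_i/\mu_i$. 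I expect the main obstacle to be the legitimacy of the cancellation in the duality identity at the level of the weak solutions of \eqref{eq2.11} and \eqref{eq2.12} (so that Proposition \ref{pro2.2} applies to genuine adjoint trajectories), together with the $\epsilon$–uniform control of the coupled state/adjoint, which again relies on the smallness of $\max\{\alpha_i\}/\min\{\mu_i\}$ through the contraction estimate of Theorem \ref{th3.1}.
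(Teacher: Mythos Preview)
Your proposal is correct and follows essentially the same route as the paper: the paper also introduces the penalized dual functional $G_\epsilon(\psi_0)=\tfrac12\int_0^T\!\int_{\mathcal{O}}|\psi|^2+\epsilon\|\psi_0\|_{L^2}+\int_\Omega w_0\psi(0)-\sum_i\alpha_i\int_0^T\!\int_{\mathcal{O}_{i,d}}\eta_iw_{i,d}$, uses Proposition~\ref{pro2.2} to obtain continuity and coercivity, sets $f_\epsilon=\psi^\epsilon|_{\mathcal{O}}$, derives $\|w^\epsilon(T)\|_{L^2}\le\epsilon$ from the duality identities \eqref{eq2.2.20}--\eqref{eq2.2.23}, and passes to the weak limit. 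The only detail you did not spell out that the paper treats explicitly is the case $\psi_{0\epsilon}=0$, where the Euler--Lagrange equation is replaced by the subdifferential inequality $\lim_{t\to0^+}t^{-1}G_\epsilon(t\psi_0)\ge0$; this still yields $\|w^\epsilon(T)\|_{L^2}\le\epsilon$ in the same way.
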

\begin{proof}
For any $\epsilon>0,$ define 
\begin{align*}
G_\epsilon(\psi_0)=\frac{1}{2}\int_0^T\int_{\mathcal{O}}|\psi|^2\,dxdt+\int_\Omega w_0\psi(0)\,dx+\epsilon\|\psi_0\|_{L^2(\Omega)}-\sum_{i=1}^2\int_0^T\int_{\mathcal{O}_{i,d}}\alpha_i\eta_iw_{i,d}\,dxdt.
\end{align*}
Then, it is easy to prove that the functional $G_\epsilon:L^2(\Omega)\rightarrow\mathbb{R}$ is continuous and strictly convex.  Moreover, from the observability inequalities established in Proposition \ref{pro2.2}, we conclude that $G_\epsilon$ is coercive. Therefore, for any $\epsilon>0,$ the functional $G_\epsilon$ admits a unique minimum point $\psi_{0\epsilon}\in L^2(\Omega).$

Denote by $(\psi^\epsilon,\eta_1^\epsilon,\eta_2^\epsilon)$ the solution of problem \eqref{eq2.12} with initial data $\psi_{0\epsilon}.$ If $\psi_{0\epsilon}\neq 0,$ then $G_\epsilon$ satisfies the optimality condition
\begin{align}\label{eq2.2.20}
\int_0^T\int_{\mathcal{O}}\psi_\epsilon\psi\,dxdt+\int_\Omega w_0\psi(0)\,dx+\frac{\epsilon}{\|\psi_{0\epsilon}\|_{L^2(\Omega)}}\int_\Omega\psi_{0\epsilon}\psi_0\,dx-\sum_{i=1}^2\int_0^T\int_{\mathcal{O}_{i,d}}\alpha_i\eta_iw_{i,d}\,dxdt=0
\end{align}
for any $\psi_0\in L^2(\Omega),$ where $(\psi,\eta_1,\eta_2)$ solves problem \eqref{eq2.12} with initial data $\psi_0.$

Now, let $f_\epsilon=\psi^\epsilon$ and let $(w^\epsilon,\phi_1^\epsilon, \phi_2^\epsilon)$ be the solution of problem \eqref{eq2.11} with $f=f_\epsilon.$ Combining problem \eqref{eq2.11} and problem \eqref{eq2.12} with initial data $\psi_0,$ we obtain
\begin{align}\label{eq2.2.21}
\int_Q\left(\psi^\epsilon\chi_{\mathcal{O}}-\sum_{i=1}^2\frac{\phi^\epsilon_i}{\mu_i}\chi_{\mathcal{O}_i}\right)\psi\,dxdt=\int_\Omega w^\epsilon(x,T)\psi_0(x)-w_0(x)\psi(x,0)\,dx+\int_Q\sum_{i=1}^2\alpha_i\eta_i\chi_{\mathcal{O}_{i,d}}w^\epsilon\,dxdt
\end{align}
and 
\begin{align}\label{eq2.2.22}
\sum_{i=1}^2\int_Q\alpha_i(w^\epsilon-w_{i,d})\chi_{\mathcal{O}_{i,d}}\eta_i\,dxdt=-\sum_{i=1}^2\int_Q\frac{\psi}{\mu_i}\chi_{\mathcal{O}_i}\phi_i^\epsilon\,dxdt.
\end{align}
Along with inequalities \eqref{eq2.2.20}-\eqref{eq2.2.22}, we obtain
\begin{align}\label{eq2.2.23}
\int_\Omega w^\epsilon(x,T)\psi_0(x)\,dx=-\frac{\epsilon}{\|\psi_{0\epsilon}\|_{L^2(\Omega)}}\int_\Omega\psi_{0\epsilon}\psi_0\,dx
\end{align}
for any $\psi_0\in L^2(\Omega),$ which implies that
\begin{align}\label{eq2.2.24}
\|w^\epsilon(T)\|_{L^2(\Omega)}\leq \epsilon.
\end{align}
If $\psi_{0\epsilon}=0,$ then 
\begin{align*}
\lim_{t\rightarrow 0^+}\frac{G_\epsilon(t\psi_0)}{t}\geq 0
\end{align*}
for any $\psi_0\in L^2(\Omega),$ i.e., 
\begin{align}\label{eq2.2.25}
\int_\Omega w_0\psi(0)\,dx+\epsilon\|\psi_0\|_{L^2(\Omega)}-\sum_{i=1}^2\int_0^T\int_{\mathcal{O}_{i,d}}\alpha_i\eta_iw_{i,d}\,dxdt\geq0,
\end{align}
where $(\psi,\eta_1,\eta_2)$ is the solution of problem \eqref{eq2.12} with initial data $\psi_0\in L^2(\Omega).$ Hence, we conclude from inequalities \eqref{eq2.2.21}-\eqref{eq2.2.22}, \eqref{eq2.2.25} and the fact that $\psi_\epsilon=0$ that
\begin{align*}
\epsilon\|\psi_0\|_{L^2(\Omega)}+\int_\Omega w^\epsilon(x,T)\psi_0\,dx\geq 0
\end{align*}
for any $\psi_0\in L^2(\Omega),$ which also implies that
\begin{align}\label{eq2.2.26}
\|w^\epsilon(T)\|_{L^2(\Omega)}\leq \epsilon.
\end{align}
Therefore, the solution $(w^\epsilon,\phi_1^\epsilon,\phi_2^\epsilon)$ of problem \eqref{eq2.11} associated with $f_\epsilon$ satisfies inequality \eqref{eq2.2.26}. Moreover, we obtain
\begin{align*}
\int_0^T\int_{\mathcal{O}}|f_\epsilon|^2\,dxdt\leq C\left(\|w_0\|_{L^2(\Omega)}^2+\sum_{i=1}^2\int_0^T\int_{\mathcal{O}_{i,d}}\theta^{-2}(x,t)|w_{i,d}(x,t)|^2\,dxdt\right),
\end{align*}
which entails that the controls $\{f_\epsilon\}_{\epsilon>0}$ are uniformly bounded in $L^2(\mathcal{O}\times (0,T)).$ Without loss of generality, we can assume that $f_\epsilon\rightharpoonup f$ weakly in $L^2(\mathcal{O}\times (0,T))$ and 
\begin{align*}
(w^\epsilon,\phi_1^\epsilon,\phi_2^\epsilon)\rightharpoonup (w,\phi_1,\phi_2),\,\,\,\,\,\textit{weakly\,\,\,in}\,\,\,X\times X\times X,
\end{align*}
where $(w,\phi_1,\phi_2)$ is the solution of problem \eqref{eq2.11} with $f.$ In particular, we have the weak convergence of $w^\epsilon(T)$ in $L^2(\Omega).$ Thus, we conclude from inequality \eqref{eq2.2.26} that  $w(T)\equiv 0,$ i.e., $f$ is the desired control. Since $J(f)$ is strictly convex and problem \eqref{1.1} with $F\equiv 0$ is linear, then the desired control $f$ is unique. Moreover, we have
\begin{align*}
\int_0^T\int_{\mathcal{O}}|f|^2\,dxdt\leq C\left(\|w_0\|_{L^2(\Omega)}^2+\sum_{i=1}^2\int_0^T\int_{\mathcal{O}_{i,d}}\theta^{-2}(x,t)|w_{i,d}(x,t)|^2\,dxdt\right).
\end{align*}
\end{proof}
\begin{remark}
The assumption \eqref{1.10} is natural. Indeed, we would like to obtain \eqref{1.7} and simultaneously keep $u$ be not too far from $\zeta_{i,d}$ in $\mathcal{O}_{i,d}\times (0,T).$ Consequently, it is reasonable to impose the assumptions that the functions $\zeta_{i,d}$ approach to $\bar{u}$ in $\mathcal{O}_{i,d}$ as $t$ goes to $T.$
\end{remark}
\section{The semi-linear case}
\def\theequation{4.\arabic{equation}}\makeatother
\setcounter{equation}{0}
Note that the cost functional $J_i$ are convex and continuously differentiable in the linear case. Thus, \eqref{1.4} is equivalent to 
\begin{align}\label{1.11}
J_i'(f;v_1,v_2)\hat{v}_i=0
\end{align}
for any $\hat{v}_i\in \mathcal{H}_i$ and any $i=1,2.$ However, in the more general semi-linear case,  the functionals $J_i$ are not convex in general. Thus, it is necessary to introduce the following definition.
\begin{definition}\label{de1.1}
For any given $f\in L^2(\mathcal{O}\times(0,T)),$ a pair $(v_1,v_2)$ is said to be a Nash quasi-equilibrium for the functional $J_i$ associated to $f,$ if the condition \eqref{1.11} is satisfied.
\end{definition}
In this section, we will always assume $F\in W^{1,\infty}(\mathbb{R}^{n+1},\mathbb{R})$ and establish the exact controllability of problem \eqref{1.1} in the semi-linear case and characterize the relation between Nash quasi-equilibrium and \eqref{1.4}.
\subsection{The optimality system in the semi-linear case}
In this subsection, we will obtain an optimality system that describes any Nash quasi-equilibrium. First of all, from definition \ref{de1.1}, we deduce that a pair $(v_1,v_2)$ is a Nash equilibrium if and only if
\begin{align}\label{eq4.1.1}
\alpha_i\int_0^T\int_{\mathcal{O}_{i,d}}(u-\zeta_{i,d})\hat{u}_i\,dxdt+\mu_i\int_0^T\int_{\mathcal{O}_i}v_i\hat{v}_i\,dxdt=0
\end{align}
for any $\hat{v}_i\in\mathcal{H}_i,$ where $\hat{u}_i$ is the solution of problem 
\begin{equation}\label{eq4.1.2}
\begin{cases}
\frac{\partial \hat{u}_i}{\partial t}+\Delta^2\hat{u}_i+a(x,t)\hat{u}_i+B(x,t)\cdot\nabla \hat{u}_i\\
=F_u(u,\nabla u)\hat{u}_i+\nabla_pF(u,\nabla u)\cdot\nabla\hat{u}_i+\hat{v}_i\chi_{\mathcal{O}_i},\,\,\,\,(x,t)\in Q,\\
\hat{u}_i=\frac{\partial \hat{u}_i}{\partial\vec{n}}=0,\,\,\,\,\,(x,t)\in\Sigma,\\
\hat{u}_i(x,0)=0,\,\,\,\,\,x\in\Omega,
\end{cases}
\end{equation}
where $p=\nabla u.$

In order to further simplified equality \eqref{eq4.1.1}, we also introduce the adjoint system of problem \eqref{eq4.1.2}:
\begin{equation}\label{eq4.1.3}
\begin{cases}
-\frac{\partial \phi_i}{\partial t}+\Delta^2\phi_i+a(x,t)\phi_i-\nabla\cdot(B(x,t)\phi_i)\\
=F_u(u,\nabla u)\phi_i-\nabla\cdot(\nabla_pF(u,\nabla u)\phi_i)+\alpha_i(u-\zeta_{i,d})\chi_{\mathcal{O}_{i,d}},\,\,\,\,(x,t)\in Q,\\ 
\phi_i=\frac{\partial\phi_i}{\partial\vec{n}}=0,\,\,\,\,\,(x,t)\in\Sigma,\\
\phi_i(x,T)=0,\,\,\,\,\,x\in\Omega.
\end{cases}
\end{equation}
Thus, by combining problem \eqref{eq4.1.2} and problem \eqref{eq4.1.3}, we can reformulate equality \eqref{eq4.1.1} as follows
\begin{align*}
\int_0^T\int_{\mathcal{O}_i}(\mu_iv_i+\phi_i)\hat{v}_i\,dxdt=0
\end{align*}
for any $\hat{v}_i\in\mathcal{H}_i,$ which implies that
\begin{align}\label{eq4.1.4}
v_i=-\frac{\phi_i}{\mu_i}|_{\mathcal{O}_i\times (0,T)}.
\end{align}
Consequently, we obtain the following optimality system
\begin{equation}\label{eq4.1.5}
\begin{cases}
\frac{\partial u}{\partial t}+\Delta^2u+a(x,t)u+B(x,t)\cdot\nabla u=F(u,\nabla u)+f\chi_{\mathcal{O}}-\sum_{i=1}^2\frac{\phi_i}{\mu_i}\chi_{\mathcal{O}_i},\,\,\,\,(x,t)\in Q,\\
-\frac{\partial \phi_i}{\partial t}+\Delta^2\phi_i+a(x,t)\phi_i-\nabla\cdot(B(x,t)\phi_i)\\
=F_u(u,\nabla u)\phi_i-\nabla\cdot(\nabla_pF(u,\nabla u)\phi_i)+\alpha_i(u-\zeta_{i,d})\chi_{\mathcal{O}_{i,d}},\,\,\,\,(x,t)\in Q,\\ 
u=\frac{\partial u}{\partial\vec{n}}=0,\,\,\,\phi_i=\frac{\partial\phi_i}{\partial\vec{n}}=0,\,\,\,\,\,\,\,(x,t)\in\Sigma,\\
u(x,0)=u_0(x),\,\,\phi_i(x,T)=0,\,\,\,x\in\Omega.
\end{cases}
\end{equation}
In what follows, we will also establish the existence of solutions of problem \eqref{eq4.1.5} by Leray-Schauder's fixed points Theorem under some suitable assumptions.
\begin{theorem}\label{th3.2}
Assume that $u_0\in L^2(\Omega),$ $f\in L^2(\mathcal{O}\times(0,T))$ and $F\in W^{1,\infty}(\mathbb{R}^{n+1},\mathbb{R}).$ If $\frac{\max\{\alpha_1,\alpha_2\}}{\min\{\mu_1,\mu_2\}}$ is sufficiently small, then problem \eqref{eq4.1.5} admits a unique weak solution $(u,\phi_1,\phi_2)\in X\times X\times X.$ Moreover, there exists a generic positive constant $C,$ such that
\begin{align*}
\|u\|_X\leq C\left(\|u_0\|_{L^2(\Omega)}+1+\|f\|_{L^2(\mathcal{O}\times (0,T))}+\sum_{i=1}^2\|\zeta_{i,d}\|_{L^2(\mathcal{O}_{i,d}\times(0,T))}\right).
\end{align*}
\end{theorem}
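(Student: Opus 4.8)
The plan is to realise a solution of the optimality system \eqref{eq4.1.5} as a fixed point of a compact map and to invoke the Leray--Schauder fixed point theorem, exactly as announced. Since the nonlinearities $F(u,\nabla u)$, $F_u(u,\nabla u)$ and $\nabla_pF(u,\nabla u)$ depend on $\nabla u$, the natural functional setting is the intermediate space $Z:=L^2(0,T;H^1(\Omega))$, into which $X$ embeds compactly by the Aubin--Lions--Simon lemma (recall $H_0^2(\Omega)\hookrightarrow\hookrightarrow H^1(\Omega)\hookrightarrow H^{-2}(\Omega)$). For a frozen state $\hat u\in Z$ I would substitute $F(\hat u,\nabla\hat u)$ as a source in the forward $u$--equation and freeze $F_u(\hat u,\nabla\hat u)$, $\nabla_pF(\hat u,\nabla\hat u)$ as coefficients in the backward $\phi_i$--equations, obtaining a \emph{linear} forward--backward coupled system for $(u,\phi_1,\phi_2)$. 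I would then define $\mathcal T(\hat u):=u$, so that a fixed point of $\mathcal T$ is precisely the first component of a solution of \eqref{eq4.1.5}.

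First I would solve the inner linear problem. Because $F\in W^{1,\infty}$, the frozen coefficients obey $\|F_u(\hat u,\nabla\hat u)\|_{L^\infty(Q)}+\|\nabla_pF(\hat u,\nabla\hat u)\|_{L^\infty(Q)}\le\|F\|_{W^{1,\infty}}$ independently of $\hat u$: the zeroth-order term $F_u\phi_i$ merely augments $a$ and the divergence-form term $-\nabla\cdot(\nabla_pF\,\phi_i)$ merely augments $-\nabla\cdot(B\phi_i)$, both staying in $L^\infty(Q)$. Hence the coupled linear system is exactly of the type treated in Theorem \ref{th3.1}, and the Banach fixed point argument there (freeze $z$, solve $\phi_i$ backward, solve $u$ forward, contract $z\mapsto u$) applies verbatim once $\frac{\max\{\alpha_1,\alpha_2\}}{\min\{\mu_1,\mu_2\}}$ is small, giving a unique $(u,\phi_1,\phi_2)\in X\times X\times X$ with the associated energy estimate. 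The decisive gain over the linear case is that $\|F(\hat u,\nabla\hat u)\|_{L^2(Q)}\le\|F\|_{L^\infty}|Q|^{1/2}$ is bounded \emph{uniformly in} $\hat u$, so the resulting bound
\begin{align*}
\|u\|_X\le C\Big(\|u_0\|_{L^2(\Omega)}+1+\|f\|_{L^2(\mathcal O\times(0,T))}+\sum_{i=1}^2\|\zeta_{i,d}\|_{L^2(\mathcal O_{i,d}\times(0,T))}\Big)
\end{align*}
holds with a constant independent of $\hat u$; this is the announced a priori estimate.

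Next I would verify the Leray--Schauder hypotheses for $\mathcal T:Z\to Z$. Compactness follows since $\mathcal T$ maps bounded subsets of $Z$ into bounded subsets of $X$ by the uniform bound, and $X\hookrightarrow\hookrightarrow Z$. For continuity, if $\hat u_k\to\hat u$ in $Z$ then $\hat u_k\to\hat u$ and $\nabla\hat u_k\to\nabla\hat u$ in $L^2(Q)$; passing to an a.e.-convergent subsequence, the boundedness and continuity of $F,F_u,\nabla_pF$ together with dominated convergence give $F(\hat u_k,\nabla\hat u_k)\to F(\hat u,\nabla\hat u)$ and the analogous convergence of the frozen coefficients in $L^2(Q)$, and stability of the inner linear solve (with its uniqueness upgrading subsequential to full convergence) yields $\mathcal T(\hat u_k)\to\mathcal T(\hat u)$. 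Finally, any $u$ with $u=\sigma\mathcal T(u)$, $\sigma\in[0,1]$, solves \eqref{eq4.1.5} with $F$ replaced by $\sigma F$, whose source is still bounded by $\|F\|_{L^\infty}$, so the set of such $u$ is bounded in $Z$. Leray--Schauder then produces a fixed point $u$, and together with its companion $\phi_i$ a solution of \eqref{eq4.1.5} satisfying the stated estimate.

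I expect uniqueness to be the main obstacle. The natural route is to take two solutions, subtract, and run a Gronwall estimate on $U=u-\tilde u$, $\Phi_i=\phi_i-\tilde\phi_i$, using the global Lipschitz property of $F$ and the smallness of $\frac{\max\{\alpha_1,\alpha_2\}}{\min\{\mu_1,\mu_2\}}$ to absorb the coupling. The source difference $F(u,\nabla u)-F(\tilde u,\nabla\tilde u)$ is controlled by $M(|U|+|\nabla U|)$ and causes no trouble, but the adjoint equations produce the term $\big(F_u(u,\nabla u)-F_u(\tilde u,\nabla\tilde u)\big)\tilde\phi_i$, which is \emph{not} Lipschitz-controllable from $F\in W^{1,\infty}$ alone: one only has $|F_u(u,\nabla u)-F_u(\tilde u,\nabla\tilde u)|\le 2\|F_u\|_{L^\infty}$, so this term behaves like a source of size $\sim\|\tilde\phi_i\|$ rather than $\sim\|U\|$. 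Closing the argument therefore requires either exploiting that $\tilde\phi_i$ is itself of order $\alpha_i$ (being driven by $\alpha_i(\tilde u-\zeta_{i,d})$), so that the offending contribution is absorbed through the smallness of $\alpha_i/\mu_i$, or strengthening the regularity of $F$ so that $F_u$ and $\nabla_pF$ are Lipschitz; I would single this term out as the delicate point of the whole proof.
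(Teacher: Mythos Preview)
Your approach is essentially the same as the paper's: both set up a fixed-point map on $L^2(0,T;H^1_0(\Omega))$, solve an inner linear forward--backward system (via the contraction of Theorem~\ref{th3.1}, after absorbing the frozen $F_u$ and $\nabla_pF$ into the coefficients $a$ and $B$), obtain an a~priori bound independent of the frozen state, and conclude existence by compactness of $X\hookrightarrow L^2(0,T;H^1_0)$ and a Schauder-type fixed-point theorem.

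There is one harmless cosmetic difference. The paper does not freeze $F(\hat u,\nabla\hat u)$ as a pure source in the forward equation; instead it writes
\[
F(u,\nabla u)=\tilde G_1(u,\nabla u)\,u+\tilde G_2(u,\nabla u)\cdot\nabla u+F(0,0),\qquad
\tilde G_1=\int_0^1 F_u(\tau u,\tau\nabla u)\,d\tau,\ \ \tilde G_2=\int_0^1\nabla_pF(\tau u,\tau\nabla u)\,d\tau,
\]
and freezes the bounded coefficients $\tilde G_1(z,\nabla z)$, $\tilde G_2(z,\nabla z)$. Your choice (freezing $F(\hat u,\nabla\hat u)$ directly and using $\|F\|_{L^\infty}$) is simpler and equally valid since $F\in W^{1,\infty}$ is bounded; the paper's variant buys nothing extra here. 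For the fixed point itself the paper shows $\Lambda$ maps a ball to itself and invokes Schauder directly rather than your Leray--Schauder homotopy $u=\sigma\mathcal T(u)$; again both are fine.

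Your instinct about uniqueness is well placed: the paper's proof establishes only existence (``at least one fixed point'') and does not address uniqueness at all, despite the theorem's wording. The obstruction you isolate---the difference $\big(F_u(u,\nabla u)-F_u(\tilde u,\nabla\tilde u)\big)\tilde\phi_i$ is not Lipschitz-controlled under the mere hypothesis $F\in W^{1,\infty}$---is exactly the reason a straightforward Gronwall argument does not close. Your suggested remedy (use that $\tilde\phi_i$ carries a factor $\alpha_i$ and absorb via smallness of $\alpha_i/\mu_i$) is the natural route, but note it still requires some quantitative control on how $F_u,\nabla_pF$ vary, so strictly speaking uniqueness is not fully justified in either proof under $F\in W^{1,\infty}$ alone.
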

\begin{proof}
Let $z\in L^2(0,T;H_0^1(\Omega))$ be given, consider the following problem 
\begin{equation}\label{eq3.1.1}
\begin{cases}
\frac{\partial u}{\partial t}+\Delta^2u+a(x,t)u+B(x,t)\cdot\nabla u=\tilde{G}_1(z,\nabla z)u+\tilde{G}_2(z,\nabla z)\cdot \nabla u\\
+F(0,0)+f\chi_{\mathcal{O}}-\sum_{i=1}^2\frac{\phi_i}{\mu_i}\chi_{\mathcal{O}_i},\,\,\,\,(x,t)\in Q,\\
-\frac{\partial \phi_i}{\partial t}+\Delta^2\phi_i+a(x,t)\phi_i-\nabla\cdot(B(x,t)\phi_i)=F_u(z,\nabla z)\phi_i-\nabla\cdot(\nabla_pF(z,\nabla z)\phi_i)\\
+\alpha_i(z-\zeta_{i,d})\chi_{\mathcal{O}_{i,d}},\,\,\,\,(x,t)\in Q,\\ 
u=\frac{\partial u}{\partial\vec{n}}=0,\,\,\,\phi_i=\frac{\partial\phi_i}{\partial\vec{n}}=0,\,\,(x,t)\in\Sigma,\\
u(x,0)=u_0(x),\,\,\phi_i(x,T)=0,\,\,\,x\in\Omega,
\end{cases}
\end{equation}
where
\begin{align*}
\tilde{G}_1(w,\nabla w)=\int_0^1\frac{\partial F}{\partial u}(\tau w,\tau\nabla w)\,d\tau,\,\,\,\tilde{G}_2(w,\nabla w)=\int_0^1\nabla_pF(\tau w,\tau\nabla w)\,d\tau.
\end{align*}
From the regularity theory of parabolic equations, we conclude that there exists a unique weak solution $(u^z,\phi^z_1,\phi^z_2)\in X\times X\times X.$  Moreover, there exists a positive constant $C,$ such that
\begin{align}\label{eq4.1.6}
\|\phi_i^z\|_X\leq C\alpha_i\|(z-\zeta_{i,d})\chi_{\mathcal{O}_{i,d}}\|_{L^2(Q)}
\end{align}
and
\begin{align}\label{eq4.1.7}
\|u^z\|_X\leq C(\|u_0\|_{L^2(\Omega)}+\|F(0,0)+f\chi_{\mathcal{O}}-\sum_{i=1}^2\frac{\phi_i}{\mu_i}\chi_{\mathcal{O}_i}\|_{L^2(Q)}).
\end{align}
Consequently, it follows from inequalities \eqref{eq4.1.6}-\eqref{eq4.1.7} that there exists a generic positive constant $\mathcal{L}_1,$ such that
\begin{align}\label{eq4.1.8}
\|u^z\|_X\leq \mathcal{L}_1\left(\|u_0\|_{L^2(\Omega)}+1+\|f\|_{L^2(\mathcal{O}\times (0,T))}+\sum_{i=1}^2\frac{\alpha_i}{\mu_i}\|(z-\zeta_{i,d})\chi_{\mathcal{O}_{i,d}}\|_{L^2(Q)}\right).
\end{align}
Define $\Lambda:L^2(0,T;H_0^1(\Omega))\rightarrow L^2(0,T;H_0^1(\Omega))$ by
\begin{align*}
\Lambda(z)=u^z,
\end{align*}
then the mapping $\Lambda$ is well-defined. In what follows, we will prove the existence of solution for problem \eqref{eq4.1.5} by the Leray-Schauder's fixed points Theorem. To this purpose, we will first prove that $\Lambda$ is continuous, i.e., if $z_j\rightarrow z$ in $L^2(0,T;H_0^1(\Omega)),$ we have $\Lambda(z_j)\rightarrow \Lambda(z).$

Denote by $u^j=\Lambda(z_j),$ where $(u^j,\phi_1^j,\phi_2^j)$ is the solution of problem
\begin{equation}\label{eq3.1.5}
\begin{cases}
\frac{\partial u^j}{\partial t}+\Delta^2u^j+a(x,t)u^j+B(x,t)\cdot\nabla u^j=\tilde{G}_1(z_j,\nabla z_j)u^j+\tilde{G}_2(z_j,\nabla z_j)\cdot \nabla u^j\\
+F(0,0)+f\chi_{\mathcal{O}}-\sum_{i=1}^2\frac{\phi^j_i}{\mu_i}\chi_{\mathcal{O}_i},\,\,\,\,(x,t)\in Q,\\
-\frac{\partial \phi^j_i}{\partial t}+\Delta^2\phi^j_i+a(x,t)\phi^j_i-\nabla\cdot(B(x,t)\phi^j_i)=F_u(z_j,\nabla z_j)\phi^j_i-\nabla\cdot(\nabla_pF(z_j,\nabla z_j)\phi^j_i)\\
+\alpha_i(z_j-\zeta_{i,d})\chi_{\mathcal{O}_{i,d}},\,\,\,\,(x,t)\in Q,\\ 
u^j=\frac{\partial u^j}{\partial\vec{n}}=0,\,\,\,\phi^j_i=\frac{\partial\phi^j_i}{\partial\vec{n}}=0,\,\,(x,t)\in\Sigma,\\
u^j(x,0)=u_0(x),\,\,\phi^j_i(x,T)=0,\,\,\,x\in\Omega.
\end{cases}
\end{equation}
It follows from inequalities \eqref{eq4.1.6}, \eqref{eq4.1.8} and the fact that $z_j\rightarrow z$ in $L^2(0,T;H_0^1(\Omega))$ that
\begin{align*}
&\{u^j\}_{j=1}^{\infty}\,\,\,\textit{is\,\,\,uniformly\,\,\,bounded\,\,in}\,\,X,\\
&\{\phi_i^j\}_{j=1}^{\infty}\,\,\,\textit{is\,\,\,uniformly\,\,\,bounded\,\,in}\,\,X,\,\,\textit{for}\,\,i=1,2,
\end{align*}
which entails that there exists a subsequence of $\{u^j\}_{j=1}^\infty,$ $\{\phi_i^j\}_{j=1}^\infty$ (still denote by themselves) and $u\in X,$ $\phi_i\in X,$ such that
\begin{align*}
&u^j\rightharpoonup u\,\,\,\textit{in}\,\,X\,\,\,\textit{as}\,\,j\rightarrow+\infty,\\
&u^j\rightarrow u\,\,\,\textit{in}\,\,L^2(0,T;H_0^1(\Omega))\,\,\,\textit{as}\,\,j\rightarrow+\infty,\\
&\phi_i^j\rightharpoonup\phi_i\,\,\,\textit{in}\,\,X\,\,\,\textit{as}\,\,j\rightarrow+\infty,\,\,\textit{for}\,\,i=1,2.
\end{align*}
Since $F\in W^{1,\infty}(\mathbb{R}^{n+1},\mathbb{R}),$ we conclude that there exists a subsequence of $\{\tilde{G}_1(z_j,\nabla z_j)\}_{j=1}^\infty,$ $\{\tilde{G}_2(z_j,\nabla z_j)\}_{j=1}^\infty,$ $\{F_u(z_j,\nabla z_j)\}_{j=1}^\infty,$
$\{\nabla_pF(z_j,\nabla z_j)\}_{j=1}^\infty$ (still denote by themselves), such that
\begin{align*}
&\tilde{G}_1(z_j,\nabla z_j)\rightarrow\tilde{G}_1(z,\nabla z)\,\,\,\textit{weakly\,\,star\,\,in}\,\,L^{\infty}(Q),\,\,\textit{as}\,\,j\rightarrow+\infty,\\
&\tilde{G}_2(z_j,\nabla z_j)\rightarrow \tilde{G}_2(z,\nabla z)\,\,\,\textit{weakly\,\,star\,\,in}\,\,L^{\infty}(Q),\,\,\textit{as}\,\,j\rightarrow+\infty,\\
&F_u(z_j,\nabla z_j)\rightarrow F_u(z,\nabla z)\,\,\,\textit{weakly\,\,star\,\,in}\,\,L^{\infty}(Q),\,\,\textit{as}\,\,j\rightarrow+\infty,\\
&\nabla_pF(z_j,\nabla z_j)\rightarrow\nabla_pF(z,\nabla z)\,\,\,\textit{weakly\,\,star\,\,in}\,\,L^{\infty}(Q),\,\,\textit{as}\,\,j\rightarrow+\infty.
\end{align*}
Let $j\rightarrow+\infty$ in problem \eqref{eq3.1.5}, we obtain
\begin{equation}\label{eq3.1.6}
\begin{cases}
\frac{\partial u}{\partial t}+\Delta^2u+a(x,t)u+B(x,t)\cdot\nabla u=\tilde{G}_1(z,\nabla z)u+\tilde{G}_2(z,\nabla z)\cdot \nabla u\\
+F(0,0)+f\chi_{\mathcal{O}}-\sum_{i=1}^2\frac{\phi_i}{\mu_i}\chi_{\mathcal{O}_i},\,\,\,\,(x,t)\in Q,\\
-\frac{\partial \phi_i}{\partial t}+\Delta^2\phi_i+a(x,t)\phi_i-\nabla\cdot(B(x,t)\phi_i)=F_u(z,\nabla z)\phi_i-\nabla\cdot(\nabla_pF(z,\nabla z)\phi_i)\\
+\alpha_i(z-\zeta_{i,d})\chi_{\mathcal{O}_{i,d}},\,\,\,\,(x,t)\in Q,\\ 
u=\frac{\partial u}{\partial\vec{n}}=0,\,\,\,\phi_i=\frac{\partial\phi_i}{\partial\vec{n}}=0,\,\,(x,t)\in\Sigma,\\
u(x,0)=u_0(x),\,\,\phi_i(x,T)=0,\,\,\,x\in\Omega,
\end{cases}
\end{equation}
which entails that $u=\Lambda(z).$ Thus, we have proved that $\Lambda(z_j)\rightarrow\Lambda(z)$ in $L^2(0,T;H_0^1(\Omega)),$ i.e., the mapping $\Lambda:L^2(0,T;H_0^1(\Omega))\rightarrow L^2(0,T;H_0^1(\Omega))$ is continuous. Thanks to the compactness of $X\subset L^2(0,T;H_0^1(\Omega))$ and inequality \eqref{eq4.1.8}, we conclude that the mapping $\Lambda:L^2(0,T;H_0^1(\Omega))\rightarrow L^2(0,T;H_0^1(\Omega))$ is compact. 
Denote by 
\begin{align*}
\mathcal{R}_1=2\mathcal{L}_1\left(\|w_0\|_{L^2(\Omega)}+1+\|f\|_{L^2(\mathcal{O}\times (0,T))}+\sum_{i=1}^2\|w_{i,d}\|_{L^2(\mathcal{O}_{i,d}\times (0,T))}\right)
\end{align*}
and
\begin{align*}
B=\{u\in L^2(0,T;H_0^1(\Omega)):\|u\|_{L^2(0,T;H_0^1(\Omega))}\leq\mathcal{R}_1\},
\end{align*}
if $\frac{\max\{\alpha_1,\alpha_2\}}{\min\{\mu_1,\mu_2\}}$ is sufficiently small such that $\mathcal{L}_1\sum_{i=1}^2\frac{\alpha_i}{\mu_i}\leq\frac{1}{2},$ then $\Lambda: B\rightarrow B.$ Thus, we can employ the Leray-Schauder's fixed points Theorem to conclude that the operator $\Lambda$ possesses at least one fixed point $w\in L^2(0,T;H_0^1(\Omega)).$ Moreover, we have
\begin{align}\label{eq3.1.5}
\|w\|_X\leq 2\mathcal{L}_1\left(\|w_0\|_{L^2(\Omega)}+1+\|f\|_{L^2(\mathcal{O}\times (0,T))}+\sum_{i=1}^2\|w_{i,d}\|_{L^2(\mathcal{O}_{i,d}\times (0,T))}\right).
\end{align}
\end{proof}
\subsection{Exact controllability}
In this subsection, we will prove the exact controllability of problem \eqref{1.1} and obtain the following result.
\begin{theorem}\label{th1.2}
Suppose that the subset $\mathcal{O}_{i,d}$ and $\mu_i$ are the same as in Theorem \ref{th1.1}, $F\in W^{1,\infty}(\mathbb{R}^{n+1};\mathbb{R})$ and let $\bar{u}$ be the unique solution of problem \eqref{1.5} with initial data $\bar{u}_0\in L^2(\Omega).$ If \eqref{1.10} holds, then for any $u_0\in L^2(\Omega),$ there exists a control $f\in L^2(\mathcal{O}\times (0,T))$ and an associated Nash quasi-equilibria $(v_1,v_2)$ such that the corresponding solutions to problem \eqref{1.1} satisfy \eqref{1.7}.
\end{theorem}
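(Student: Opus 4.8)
The plan is to reduce, exactly as in the linear case, to a null-controllability statement for $w=u-\bar u$ and then to absorb the nonlinearity by a linearization-plus-fixed-point scheme built on Theorem \ref{th1.1}. Writing $F(u,\nabla u)-F(\bar u,\nabla\bar u)=\tilde G_1(\cdot)\,w+\tilde G_2(\cdot)\cdot\nabla w$ with integral-average coefficients in the spirit of the proof of Theorem \ref{th3.2}, and recalling that a Nash quasi-equilibrium is precisely encoded by the optimality system \eqref{eq4.1.5}, I would freeze a state $z\in L^2(0,T;H_0^1(\Omega))$ in every occurrence of the $F$-derivatives. This turns \eqref{eq4.1.5} into a \emph{linear} Stackelberg--Nash exact controllability problem for $w$, whose state equation carries the coefficients $a-\tilde G_1(z,\nabla z)$ and $B-\tilde G_2(z,\nabla z)$ and whose adjoints \eqref{eq4.1.3} carry $a-F_u(z,\nabla z)$ and $B-\nabla_pF(z,\nabla z)$.

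The decisive observation is that, since $F\in W^{1,\infty}(\mathbb R^{n+1};\mathbb R)$, all of these frozen coefficients are bounded in $L^\infty(Q)$ by $\|a\|_{L^\infty(Q)}+\|F\|_{W^{1,\infty}}$ and $\|B\|_{L^\infty(Q)}+\|F\|_{W^{1,\infty}}$, \emph{uniformly in $z$}. Because the Carleman and observability constants of Proposition \ref{pro2.2}, and hence the leader bound produced in Theorem \ref{th1.1}, depend on the coefficients only through these $L^\infty$ norms, applying Theorem \ref{th1.1} to the frozen problem yields a leader $f_z$ and an associated follower pair for which the controlled state $w_z$ satisfies $w_z(\cdot,T)\equiv0$, together with a bound
\[
\|f_z\|_{L^2(\mathcal O\times(0,T))}+\|w_z\|_X\le C\Big(\|u_0-\bar u_0\|_{L^2(\Omega)}+\sum_{i=1}^2\big\|\theta^{-1}(\bar u-\zeta_{i,d})\big\|_{L^2(\mathcal O_{i,d}\times(0,T))}\Big)
\]
in which $C$ is \emph{independent of $z$}. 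This uniform estimate, finite by hypothesis \eqref{1.10}, is what lets a fixed-point map preserve a fixed ball.

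I would then define $\Lambda(z)=w_z$ on a closed ball $B_R$ of $L^2(0,T;H_0^1(\Omega))$ with $R$ equal to the right-hand side above; the uniform bound gives $\Lambda(B_R)\subset B_R$, while the $X$-bound together with the compact embedding $X\hookrightarrow\hookrightarrow L^2(0,T;H_0^1(\Omega))$ supplies compactness. A fixed point $w=\Lambda(w)$ makes the frozen coefficients coincide with the true linearizations of $F$, so that $(u,\phi_1,\phi_2)=(w+\bar u,\phi_1,\phi_2)$ solves the genuine optimality system \eqref{eq4.1.5}; setting $v_i=-\phi_i/\mu_i|_{\mathcal O_i\times(0,T)}$ then furnishes a Nash quasi-equilibrium in the sense of \eqref{1.11} for which \eqref{1.7} holds, which is exactly the assertion.

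The main obstacle is the \emph{stability} of the construction under variation of $z$. Although Theorem \ref{th1.1} singles out a \emph{unique} leader $f_z$, it is obtained through the dual functionals $G_\epsilon$ and the limit $\epsilon\to0$, so continuity of $z\mapsto w_z$ is not transparent. I would establish it by a closed-graph argument: along $z_j\to z$ in $L^2(0,T;H_0^1(\Omega))$ the frozen coefficients $\tilde G_1(z_j,\nabla z_j),\ \tilde G_2(z_j,\nabla z_j),\ F_u(z_j,\nabla z_j),\ \nabla_pF(z_j,\nabla z_j)$ converge weakly-$\ast$ in $L^\infty(Q)$, exactly as in the proof of Theorem \ref{th3.2}; the uniform bound then lets me extract weak limits of $(w_{z_j},f_{z_j},\phi^{z_j}_i)$ in $X$ and $L^2$, pass to the limit in the linear optimality system, and verify that the limit still satisfies $w_z(\cdot,T)=0$ and the Nash identities, so that it coincides with $\Lambda(z)$. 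If a direct continuity proof proves inaccessible, one replaces $\Lambda$ by the multivalued controlled-state map and invokes the Kakutani--Fan--Glicksberg theorem, the same closed-graph computation furnishing the required upper semicontinuity together with convex compact values. Checking that the end constraint and the Nash relations survive the weak-$\ast$ passage of the coefficients is the genuinely delicate point; the remaining ingredients are the linear result and standard parabolic energy estimates.
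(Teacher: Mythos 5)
Your proposal follows essentially the same route as the paper: freeze the state $z$ in all occurrences of the derivatives of $F$ to obtain a linear optimality system with uniformly bounded coefficients, invoke the linear exact-controllability result (the argument of Theorem \ref{th1.1}) to produce a leader $f_z$ with a bound independent of $z$, and close with a compactness fixed-point argument in $L^2(0,T;H_0^1(\Omega))$. If anything, you are more careful than the paper about the continuity of the map $z\mapsto w_z$ (the paper simply refers to the proof of Theorem \ref{th3.2} and omits the details), and your closed-graph/Kakutani fallback correctly identifies the one genuinely delicate step.
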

\begin{proof}
Denote by $w=u-\bar{u},$ then we can rewrite problem \eqref{eq4.1.5} as follows:
\begin{equation}\label{eq4.2.1}
\begin{cases}
\frac{\partial w}{\partial t}+\Delta^2w+a(x,t)w+B(x,t)\cdot\nabla w=G_1(w,\nabla w)w+G_2(w,\nabla w)\cdot \nabla w\\
+f\chi_{\mathcal{O}}-\sum_{i=1}^2\frac{\phi_i}{\mu_i}\chi_{\mathcal{O}_i},\,\,\,\,(x,t)\in Q,\\
-\frac{\partial \phi_i}{\partial t}+\Delta^2\phi_i+a(x,t)\phi_i-\nabla\cdot(B(x,t)\phi_i)=F_u(\bar{u}+w,\nabla \bar{u}+\nabla w)\phi_i\\
-\nabla\cdot(\nabla_pF(\bar{u}+w,\nabla \bar{u}+\nabla w)\phi_i)+\alpha_i(w-w_{i,d})\chi_{\mathcal{O}_{i,d}},\,\,\,\,(x,t)\in Q,\\ 
w=\frac{\partial w}{\partial\vec{n}}=0,\,\,\,\phi_i=\frac{\partial\phi_i}{\partial\vec{n}}=0,\,\,\,\,\,\,\,(x,t)\in\Sigma,\\
w(x,0)=w_0(x),\,\,\phi_i(x,T)=0,\,\,\,x\in\Omega,
\end{cases}
\end{equation}
where $w_{i,d}=\zeta_{i,d}-\bar{u},$ $w_0=u_0-\bar{u}_0$ and 
\begin{align*}
G_1(w,\nabla w)=&\int_0^1\frac{\partial F}{\partial u}(\bar{u}+\tau w,\nabla\bar{u}+\tau\nabla w)\,d\tau,\\
G_2(w,\nabla w)=&\int_0^1\nabla_pF(\bar{u}+\tau w,\nabla\bar{u}+\tau\nabla w)\,d\tau.
\end{align*}
For any given $z\in L^2(0,T;H_0^1(\Omega))$ and any fixed $f\in L^2(\mathcal{O}\times (0,T)),$ we consider the following problem:
\begin{equation}\label{eq4.2.2}
\begin{cases}
\frac{\partial w}{\partial t}+\Delta^2w+a(x,t)w+B(x,t)\cdot\nabla w=G_1(z,\nabla z)w+G_2(z,\nabla z)\cdot \nabla w\\
+f\chi_{\mathcal{O}}-\sum_{i=1}^2\frac{\phi_i}{\mu_i}\chi_{\mathcal{O}_i},\,\,\,\,(x,t)\in Q,\\
-\frac{\partial \phi_i}{\partial t}+\Delta^2\phi_i+a(x,t)\phi_i-\nabla\cdot(B(x,t)\phi_i)=F_u(\bar{u}+z,\nabla \bar{u}+\nabla z)\phi_i\\
-\nabla\cdot(\nabla_pF(\bar{u}+z,\nabla \bar{u}+\nabla z)\phi_i)+\alpha_i(w-w_{i,d})\chi_{\mathcal{O}_{i,d}},\,\,\,\,(x,t)\in Q,\\ 
w=\frac{\partial w}{\partial\vec{n}}=0,\,\,\,\phi_i=\frac{\partial\phi_i}{\partial\vec{n}}=0,\,\,\,\,\,\,\,(x,t)\in\Sigma,\\
w(x,0)=w_0(x),\,\,\phi_i(x,T)=0,\,\,\,x\in\Omega.
\end{cases}
\end{equation}
Since $F\in W^{1,\infty}(\mathbb{R}^{n+1},\mathbb{R}),$ there exists a positive constant $M,$ such that
\begin{align*}
|G_1(z,p)|+|G_2(z,p)|+|F_u(z,p)|+|\nabla_pF(z,p)|\leq M,\,\,\,\,\,\forall\,\,\,\,(z,p)\in\mathbb{R}^{n+1}.
\end{align*}
Arguing as in the proof of Theorem \ref{th3.1}, we conclude that there exists a constant $C>0,$ such that
\begin{align*}
\|w\|_X\leq C\left(1+\|f\|_{L^2(\mathcal{O}\times (0,T))}\right).
\end{align*}
For any given $z\in L^2(0,T;H_0^1(\Omega)),$ let $(w^z,\phi_1^z,\phi_2^z)$ be the solution of problem \eqref{eq4.2.2} associated with $z.$ Consider the adjoint problem of the linearized system of problem \eqref{eq4.2.2}:
\begin{equation}\label{eq4.2.3}
\begin{cases}
-\frac{\partial \psi^z}{\partial t}+\Delta^2\psi^z+a(x,t)\psi^z-\nabla\cdot(B(x,t)\psi^z)=G_1(z,\nabla z)\psi^z-\nabla\cdot(G_2(z,\nabla z)\psi^z)\\
+\sum_{i=1}^2\alpha_i\eta^z_i\chi_{\mathcal{O}_{i,d}},\,\,\,\,(x,t)\in Q,\\ 
\frac{\partial \eta^z_i}{\partial t}+\Delta^2\eta^z_i+a(x,t)\eta^z_i+B(x,t)\cdot\nabla \eta^z_i=F_u(\bar{u}+z,\nabla \bar{u}+\nabla z)\eta^z_i\\
+\nabla_pF(\bar{u}+z,\nabla \bar{u}+\nabla z)\cdot\nabla\eta^z_i-\frac{\psi^z}{\mu_i}\chi_{\mathcal{O}_i},\,\,\,\,(x,t)\in Q,\\
\psi^z=\frac{\partial \psi^z}{\partial\vec{n}}=0,\,\,\,\eta^z_i=\frac{\partial\eta^z_i}{\partial\vec{n}}=0,\,\,(x,t)\in\Sigma,\\
\psi^z(x,T)=\psi_0(x),\,\,\eta^z_i(x,0)=0,\,\,\,x\in\Omega.
\end{cases}
\end{equation}
Combining problem \eqref{eq4.2.2} with problem \eqref{eq4.2.3}, we obtain
\begin{align*}
\int_0^T\int_\mathcal{O}f\psi^z\,dxdt=\int_\Omega w^z(x,T)\psi_0(x)-w_0(x)\psi^z(x,0)\,dx+\int_Q\sum_{i=1}^2\alpha_i\eta^z_i\chi_{\mathcal{O}_{i,d}}w_{i,d}\,dxdt,
\end{align*}
which entails that problem \eqref{eq4.2.2} is null controllability if and only if 
\begin{align}\label{eq4.2.4}
\int_0^T\int_\mathcal{O}f\psi^z\,dxdt=\int_Q\sum_{i=1}^2\alpha_i\eta^z_i\chi_{\mathcal{O}_{i,d}}w_{i,d}\,dxdt-\int_\Omega w_0(x)\psi^z(x,0)\,dx
\end{align}
for any $\psi_0\in L^2(\Omega).$

By the similar proof of Theorem \ref{th1.1}, we conclude that for any $z\in L^2(0,T;H_0^1(\Omega)),$ there exists a unique leader control $f_z\in L^2(\mathcal{O}\times (0,T)),$ such that $w^z(T)\equiv 0.$ Moreover, there exists a positive constant $C$ independent of $z,$ such that
\begin{align*}
\int_0^T\int_{\mathcal{O}}|f_z|^2\,dxdt\leq C\left(\|w_0\|_{L^2(\Omega)}^2+\sum_{i=1}^2\int_0^T\int_{\mathcal{O}_{i,d}}\theta^{-2}(x,t)|w_{i,d}(x,t)|^2\,dxdt\right).
\end{align*}
Applying the Leray-Schauder's fixed points Theorem, we can deduce that for any $u_0\in L^2(\Omega),$ there exist at least one control $f\in L^2(\mathcal{O}\times (0,T))$ such that the corresponding solutions to problem \eqref{eq4.2.1} satisfy $w(T)\equiv 0.$ The details of proof is very similar with the proof of Theorem \ref{th3.2}, we omit it here.
\end{proof}
\subsection{Equilibria and quasi-equilibria}
The main aim of this subsection is to prove that there are some situations where the concepts of Nash equilibria and Nash quasi-equilibria are equivalent in the semi-linear case. Here, we state it as follows:
\begin{theorem}\label{th1.3}
Assume that $u_0\in L^2(\Omega)$ and $2\leq n\leq 20,$ $F\in W^{2,\infty}(\mathbb{R}^{n+1};\mathbb{R}),$ the ratio $\frac{\max\{\mu_1,\mu_2\}}{\min\{\alpha_1,\alpha_2\}}$ is sufficiently large and $\zeta_{i,d}\in L^{\infty}(\mathcal{O}_{i,d}\times (0,T))$ for $i=1,2.$ For any $f\in L^2(\mathcal{O}\times (0,T)),$ if there exists a positive constant $C,$ only depending on $\Omega,$ $\mathcal{O},$ $\mathcal{O}_i,$ $\mathcal{O}_{i,d},$ $T$ such that 
\begin{align}\label{1.12}
\frac{\max\{\mu_1,\mu_2\}}{\min\{\alpha_1,\alpha_2\}}\geq C(1+\|f\|_{L^2(\mathcal{O}\times (0,T))}),
\end{align}
then the couple $(v_1,v_2)$ fulfills \eqref{1.4} if and only if it fulfills equality \eqref{1.11}.
\end{theorem}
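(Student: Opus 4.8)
The statement is the equivalence of the minimization condition \eqref{1.4} (Nash equilibrium) with the first-order stationarity condition \eqref{1.11} (Nash quasi-equilibrium). The implication \eqref{1.4}$\Rightarrow$\eqref{1.11} is the trivial direction: since $F\in W^{2,\infty}(\mathbb{R}^{n+1};\mathbb{R})$ makes each partial functional $\hat{v}_i\mapsto J_i(f;v_1,v_2)$ of class $C^2$ on $\mathcal{H}_i$, a minimizer annihilates its partial derivative, which is precisely \eqref{1.11}. The whole content lies in the converse. The natural route is to prove that, for each fixed $v_{3-i}$, the map $\hat{v}_i\mapsto J_i(f;v_1,v_2)$ is \emph{convex} on $\mathcal{H}_i$; a stationary point of a convex functional is automatically a global minimizer, which is \eqref{1.4}. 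Thus the plan is to establish the second-order sufficient condition $J_i''(f;v_1,v_2)(\hat{v}_i,\hat{v}_i)\geq 0$ for every $\hat{v}_i\in\mathcal{H}_i$ and $i=1,2$, under the balance \eqref{1.12}.

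First I would compute the second variation. Writing $u_i$ for the first derivative of the state in the direction $\hat{v}_i$, i.e. the solution of the linearized problem \eqref{eq4.1.2}, and $u_{ii}$ for the second derivative, i.e. the solution of the same linear operator with zero data and source $F''(u,\nabla u)[(u_i,\nabla u_i),(u_i,\nabla u_i)]$, differentiation of \eqref{eq4.1.1} gives
\begin{align*}
J_i''(f;v_1,v_2)(\hat{v}_i,\hat{v}_i)=\alpha_i\int_0^T\int_{\mathcal{O}_{i,d}}|u_i|^2\,dxdt+\mu_i\int_0^T\int_{\mathcal{O}_i}|\hat{v}_i|^2\,dxdt+\alpha_i\int_0^T\int_{\mathcal{O}_{i,d}}(u-\zeta_{i,d})u_{ii}\,dxdt.
\end{align*}
The first two terms are nonnegative, so everything reduces to dominating the last, indefinite term by the coercive term $\mu_i\|\hat{v}_i\|_{\mathcal{H}_i}^2$. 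Using the adjoint state $\phi_i$ of \eqref{eq4.1.3} and the duality between \eqref{eq4.1.2} and \eqref{eq4.1.3}, this term may be recast as $\int_Q\phi_i\,F''(u,\nabla u)[(u_i,\nabla u_i),(u_i,\nabla u_i)]\,dxdt$, which is convenient because $\|F''\|_{L^\infty}<\infty$.

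Next I would run two energy estimates. The linear equation \eqref{eq4.1.2} with right-hand side $\hat{v}_i\chi_{\mathcal{O}_i}\in L^2(Q)$ and the boundedness of $F_u$, $\nabla_pF$, $a$, $B$ yield, by maximal parabolic regularity for $\partial_t+\Delta^2$, a bound of the type $\|u_i\|_{L^2(0,T;H^4)\cap L^\infty(0,T;H^2)}\leq C\|\hat{v}_i\|_{\mathcal{H}_i}$; likewise $\|\phi_i\|_{L^2(0,T;H^4)\cap L^\infty(0,T;H^2)}\leq C\alpha_i\|(u-\zeta_{i,d})\chi_{\mathcal{O}_{i,d}}\|_{L^2(Q)}$, and Theorem \ref{th3.2} together with $\zeta_{i,d}\in L^\infty(\mathcal{O}_{i,d}\times(0,T))$ bounds the latter by $C\alpha_i(1+\|f\|_{L^2(\mathcal{O}\times(0,T))})$. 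Controlling the trilinear term $\int_Q\phi_i\,F''(u,\nabla u)[(u_i,\nabla u_i),(u_i,\nabla u_i)]$ by these norms then produces a bound $C\alpha_i(1+\|f\|_{L^2(\mathcal{O}\times(0,T))})\|\hat{v}_i\|_{\mathcal{H}_i}^2$, whence
\begin{align*}
J_i''(f;v_1,v_2)(\hat{v}_i,\hat{v}_i)\geq\left(\mu_i-C\alpha_i(1+\|f\|_{L^2(\mathcal{O}\times(0,T))})\right)\|\hat{v}_i\|_{\mathcal{H}_i}^2.
\end{align*}
This is nonnegative exactly when $\mu_i\geq C\alpha_i(1+\|f\|_{L^2(\mathcal{O}\times(0,T))})$ for $i=1,2$, which is guaranteed by the balance condition \eqref{1.12}; the partial convexity, and hence the equivalence \eqref{1.4}$\Leftrightarrow$\eqref{1.11}, follows.

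The delicate point, and what I expect to be the main obstacle, is the estimate of the trilinear term: it is here that the dimension restriction $2\leq n\leq 20$ enters. One must place $\phi_i$, $u_i$ and $\nabla u_i$ in suitable mixed space-time Lebesgue spaces, obtained by interpolating $L^\infty(0,T;H^2)$ with $L^2(0,T;H^4)$ to get $u_i\in L^{2/\theta}(0,T;H^{2+2\theta})$ and applying the spatial Sobolev embeddings of $H^{2+2\theta}(\Omega)$, and then closing the triple Hölder inequality in both time and space. Because the fourth-order smoothing allows $\theta$ up to $1$, one may optimize the interpolation exponents against the time integrability constraint, and the resulting inequality is valid precisely on the range $2\leq n\leq 20$; this Sobolev bookkeeping is exactly what turns the formal positivity of $J_i''$ into the rigorous inequality above, and it is the step I would treat with the most care.
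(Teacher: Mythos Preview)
Your proposal is essentially the same strategy as the paper: compute the second partial variation $D_i^2J_i(f;v_1,v_2)(\hat v_i,\hat v_i)$, isolate the indefinite cross term, rewrite it as a trilinear integral $\int_Q\phi_i\,F''(u,\nabla u)[(u_i,\nabla u_i),(u_i,\nabla u_i)]$ via the adjoint, and then bound this by $C\alpha_i(1+\|f\|_{L^2})\|\hat v_i\|_{\mathcal{H}_i}^2$ using interpolation and Sobolev embeddings, with the dimension restriction $2\le n<20$ emerging from closing the H\"older exponents. Your organization differs only cosmetically: you pass through the second state derivative $u_{ii}$ and then dualize against $\phi_i$, whereas the paper differentiates the adjoint itself, introducing $\eta=\lim_{s\to 0}(\phi^s-\phi)/s$ and writing $D_1^2J_1=\int_{\mathcal{O}_1}w_1\eta+\mu_1\|w_1\|^2$; both routes land on the identical trilinear expression.

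Two points worth tightening. First, for the integrability of $\phi_i$ the paper does \emph{not} rely solely on the $L^2(0,T;H^4)\cap L^\infty(0,T;H^2)$ energy bound you invoke: it places the source $(u-\zeta_{i,d})\chi_{\mathcal{O}_{i,d}}$ in $L^p(0,T;L^q)$ with $q>2$ (this is where the hypothesis $\zeta_{i,d}\in L^\infty$ is actually used) and appeals to $L^p$--$L^q$ maximal parabolic regularity to get $\phi_i\in L^p(0,T;W^{2,q})$, which after Sobolev embedding is exactly what makes the H\"older bookkeeping close up to $n<20$. Your $L^2$-based route may yield a slightly narrower range, so follow the paper's $L^p$--$L^q$ step here. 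Second, your framing speaks of proving global convexity of $\hat v_i\mapsto J_i$, but the bound you derive (and the paper's) uses Theorem~\ref{th3.2} to control $\|u\|$, which is only valid at the quasi-equilibrium $(v_1,v_2)$ itself; what you actually obtain is strict positive definiteness of the Hessian at that point, and the paper draws the same conclusion from the same estimate.
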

\begin{proof}
Let $f\in L^2(\mathcal{O}\times(0,T))$ be given and let $(v_1,v_2)$ be the corresponding Nash quasi-equilibrium pair. For any fixed $w_1\in\mathcal{H}_1$ and any $s\in\mathbb{R},$ denote by $u^s$ the solution of the following problem
\begin{equation}\label{eq4.3.1}
\begin{cases}
\frac{\partial u^s}{\partial t}+\Delta^2u^s+a(x,t)u^s+B(x,t)\cdot\nabla u^s=F(u^s,\nabla u^s)\\+f\chi_{\mathcal{O}}+(v_1+sw_1)\chi_{\mathcal{O}_1}+v_2\chi_{\mathcal{O}_2},\,\,\,\,(x,t)\in Q,\\
u^s=\frac{\partial u^s}{\partial\vec{n}}=0,\,\,\,\,\,(x,t)\in\Sigma,\\
u^s(x,0)=u_0(x),\,\,\,\,\,x\in\Omega
\end{cases}
\end{equation}
and let $u=u^s|_{s=0}.$ In what follows, we will estimate $D_1^2J_1(f;v_1,v_2)(w_1,w_1)$ from below.

Let $w^s$ be the solution of problem
\begin{equation}\label{eq4.3.2}
\begin{cases}
\frac{\partial w^s}{\partial t}+\Delta^2w^s+a(x,t)w^s+B(x,t)\cdot\nabla w^s\\
=F_u(u^s,\nabla u^s)w^s+\nabla_pF(u^s,\nabla u^s)\cdot\nabla w^s+\varphi_1\chi_{\mathcal{O}_1},\,\,\,\,(x,t)\in Q,\\
w^s=\frac{\partial w^s}{\partial\vec{n}}=0,\,\,\,\,\,(x,t)\in\Sigma,\\
w^s(x,0)=0,\,\,\,\,\,x\in\Omega
\end{cases}
\end{equation}
and denote by $w=w^s|_{s=0}.$ Thanks to
\begin{align*}
D_1J_1(f;v_1+sw_1,v_2)\varphi_1=\alpha_1\int_0^T\int_{\mathcal{O}_{1,d}}(u^s-\zeta_{1,d})w^s\,dxdt+\mu_1\int_0^T\int_{\mathcal{O}_1}(v_1+sw_1)\varphi_1
\end{align*}
for any $\varphi_1\in\mathcal{H}_1,$ we conclude that
\begin{align*}
&D_1J_1(f;v_1+sw_1,v_2)\varphi_1-D_1J_1(f;v_1,v_2)\varphi_1\\
=&\alpha_1\int_0^T\int_{\mathcal{O}_{1,d}}(u^s-\zeta_{1,d})w^s\,dxdt-\alpha_1\int_0^T\int_{\mathcal{O}_{1,d}}(u-\zeta_{1,d})w\,dxdt+s\mu_1\int_0^T\int_{\mathcal{O}_1}w_1\varphi_1.
\end{align*}
Let us consider the adjoint system of problem \eqref{eq4.3.2}:
\begin{equation}\label{eq4.3.3}
\begin{cases}
-\frac{\partial \phi^s}{\partial t}+\Delta^2\phi^s+a(x,t)\phi^s-\nabla\cdot(B(x,t)\phi^s)=F_u(u^s,\nabla u^s)\phi^s\\
-\nabla\cdot(\nabla_pF(u^s,\nabla u^s)\phi^s)+\alpha_1\chi_{\mathcal{O}_{1,d}}(u^s-\zeta_{1,d}),\,\,\,\,(x,t)\in Q,\\
\phi^s=\frac{\partial \phi^s}{\partial\vec{n}}=0,\,\,\,\,\,(x,t)\in\Sigma,\\
\phi^s(x,T)=0,\,\,\,\,\,x\in\Omega
\end{cases}
\end{equation}
and denote by $\phi=\phi^s|_{s=0},$ we deduce from problem \eqref{eq4.3.2} and problem \eqref{eq4.3.3} that
\begin{align*}
\alpha_1\int_0^T\int_{\mathcal{O}_{1,d}}(u^s-\zeta_{1,d})w^s\,dxdt=\int_0^T\int_{\mathcal{O}_1}\varphi_1\phi^s\,dxdt.
\end{align*}
Thus, we obtain
\begin{align*}
&D_1J_1(f;v_1+sw_1,v_2)\varphi_1-D_1J_1(f;v_1,v_2)\varphi_1\\
=&\int_0^T\int_{\mathcal{O}_1}\varphi_1(\phi^s-\phi)\,dxdt+s\mu_1\int_0^T\int_{\mathcal{O}_1}w_1\varphi_1.
\end{align*}
It is easy to prove that the following limits exist:
\begin{align*}
h=\lim_{s\rightarrow 0}\frac{u^s-u}{s},\,\,\,\eta=\lim_{s\rightarrow 0}\frac{\phi^s-\phi}{s}\end{align*}
and $(h,\eta)$ is the solution of the following problem
\begin{equation}\label{eq4.3.4}
\begin{cases}
\frac{\partial h}{\partial t}+\Delta^2h+a(x,t)h+B(x,t)\cdot\nabla h=F_u(u,\nabla u)h+\nabla_pF(u,\nabla u)\cdot\nabla h+w_1\chi_{\mathcal{O}_1},\,\,\,\,(x,t)\in Q,\\
-\frac{\partial \eta}{\partial t}+\Delta^2\eta+a(x,t)\eta-\nabla\cdot(B(x,t)\eta)=F_{uu}(u,\nabla u)\phi h+\nabla_pF_u(u,\nabla u)\cdot\nabla h\phi+F_u(u,\nabla u)\eta\\
-\nabla\cdot(\nabla_pF_u(u,\nabla u)h\phi+\nabla_p^2F(u,\nabla u)\nabla h\phi+\nabla_pF(u,\nabla u)\eta)+\alpha_1\chi_{\mathcal{O}_{1,d}}h,\,\,\,\,(x,t)\in Q,\\
h=\frac{\partial h}{\partial\vec{n}}=0,\,\,\,\eta=\frac{\partial \eta}{\partial\vec{n}}=0,\,\,(x,t)\in\Sigma,\\
h(x,0)=0,\,\,\eta(x,T)=0,\,\,\,x\in\Omega.
\end{cases}
\end{equation}
Moreover, we obtain
\begin{align*}
D_1^2J_1(f;v_1,v_2)(w_1,\varphi_1)=\int_0^T\int_{\mathcal{O}_1}\varphi_1\eta\,dxdt+\mu_1\int_0^T\int_{\mathcal{O}_1}w_1\varphi_1\,dxdt.
\end{align*}
In particular, we have
\begin{align*}
D_1^2J_1(f;v_1,v_2)(w_1,w_1)=\int_0^T\int_{\mathcal{O}_1}w_1\eta\,dxdt+\mu_1\int_0^T\int_{\mathcal{O}_1}|w_1|^2\,dxdt.
\end{align*}
From \eqref{eq4.3.4}, we deduce that
\begin{align}\label{eq4.3.5}
\nonumber\int_0^T\int_{\mathcal{O}_1}w_1\eta\,dxdt=&\int_QF_{uu}(u,\nabla u)\phi h^2+\nabla_pF_u(u,\nabla u)\cdot\nabla h\phi h+\alpha_1\chi_{\mathcal{O}_{1,d}}|h|^2\,dxdt\\
&+\int_Q\left(\nabla_pF_u(u,\nabla u)h\phi+\nabla_p^2F(u,\nabla u)\nabla h\phi\right)\cdot\nabla h\,dxdt.
\end{align}
In what follows, we will estimate the right-hand side of \eqref{eq4.3.5}. To this purpose, we will prove that 
\begin{align*}
\phi\in L^r(0,T;L^s(\Omega)),\,\,\,h\in L^{2r'}(0,T;W^{1,2s'}(\Omega)),
\end{align*}
where $r'$ and $s'$ are the conjugate of $r$ and $s,$ respectively.

From the regularity theorem of parabolic system, we can easily deduce that 
\begin{align*}
h\in L^{\infty}(0,T;H_0^2(\Omega))\cap H^1(0,T;L^2(\Omega))\cap L^2(0,T;H^4(\Omega))
\end{align*}
and
\begin{align}\label{eq4.3.6}
\|h_t\|_{L^2(Q)}+\|h\|_{L^{\infty}(0,T;H_0^2(\Omega))\cap L^2(0,T;H^4(\Omega))}\leq C\|w_1\|_{\mathcal{H}_1}.
\end{align}
{\bf Case 1: $n>4.$}

Applying interpolation inequality $H^1(0,T;L^2(\Omega))\cap L^2(0,T;H^4(\Omega))\subset L^a(0,T;L^b(\Omega)),$ yields
\begin{align*}
h\in L^a(0,T;W^{1,b}(\Omega)),
\end{align*}
where
\begin{align*}
\frac{1}{a}=\frac{1}{2}-\theta,\,\,\,\textit{for\,\,some}\,\,\theta\in(0,\frac{1}{2}),\,\,\frac{1}{b}=\frac{a(n-2)-8}{2an}.
\end{align*}
Let $a=2r'$ and $b=2s',$ we obtain
\begin{align*}
r=\frac{a}{a-2},\,\,\,s=\frac{an}{2a+8}.
\end{align*}
Since $u\in L^{\infty}(0,T;L^2(\Omega))\cap H^1(0,T;H^{-2}(\Omega))\cap L^2(0,T;H_0^2(\Omega)),$ we deduce from interpolation inequality that $u\in L^p(0,T;L^q(\Omega)),$ where
\begin{align*}
\frac{1}{p}=\frac{\bar{\theta}}{2},\,\,\,\textit{for\,\,some}\,\,\bar{\theta}\in(0,1),\,\,\frac{1}{q}=\frac{a(n-4)+8}{2an}.
\end{align*}
It follows from the regularity theory of parabolic equation that
\begin{align*}
\phi\in L^p(0,T;W^{2,q}(\Omega))\subset L^p(0,T;L^{\frac{qn}{n-2q}})=L^p(0,T;L^{\frac{2pn}{p(n-8)+8}}(\Omega)).
\end{align*}
Taking $p=r,$ we obtain $\phi\in L^r(0,T;L^{\frac{2an}{an-16}}(\Omega)).$ Thus, in order to $\phi\in L^r(0,T;L^s(\Omega)),$ we need to require that
\begin{align*}
\frac{an}{2a+8}\leq\frac{2an}{an-16},
\end{align*}
which is true if and only if $n<20.$ In this case, we obtain
\begin{align*}
\left|\int_0^T\int_{\mathcal{O}_1}w_1\eta\,dxdt\right|\leq&C\int_Q|\phi| |h|^2+|\nabla h||\phi| |h|+|\nabla h|^2|\phi|+\alpha_1\chi_{\mathcal{O}_{1,d}}|h|^2\,dxdt\\
\leq&C\|\phi\|_{L^r(0,T;L^s(\Omega))}\|\nabla h\|_{L^{2r'}(0,T;L^{2s'}(\Omega))}^2+C\alpha_1\|h\|_{L^2(Q)}^2\\
\leq&C\alpha_1(1+\|u-\zeta_{1,d}\|_{L^2(\mathcal{O}_{1,d}\times(0,T))})\|w_1\|_{\mathcal{H}_1}^2\\
\leq&C\alpha_1(1+\|f\|_{L^2(\mathcal{O}\times(0,T))})\|w_1\|_{\mathcal{H}_1}^2.
\end{align*}
{\bf Case 2: $2\leq n\leq 4.$}
\begin{align*}
\left|\int_0^T\int_{\mathcal{O}_1}w_1\eta\,dxdt\right|\leq&C\int_Q|\phi| |h|^2+|\nabla h||\phi| |h|+|\nabla h|^2|\phi|+\alpha_1\chi_{\mathcal{O}_{1,d}}|h|^2\,dxdt\\
\leq&C\int_0^T\|\phi(t)\|_{L^n(\Omega)}\|\nabla h(t)\|_{L^{\frac{2n}{n-2}}(\Omega)}^2\,dt+C\alpha_1\|h\|_{L^2(Q)}^2\\
\leq&C\|\phi\|_{L^2(0,T;H^2_0(\Omega))}\|h\|_{L^{\infty}(0,T;H_0^2(\Omega)}^2+C\alpha_1\|h\|_{L^2(Q)}^2\\
\leq&C\alpha_1(1+\|u-\zeta_{1,d}\|_{L^2(\mathcal{O}_{1,d}\times(0,T))})\|w_1\|_{\mathcal{H}_1}^2\\
\leq&C\alpha_1(1+\|f\|_{L^2(\mathcal{O}\times(0,T))})\|w_1\|_{\mathcal{H}_1}^2.
\end{align*}
Thus, if $2\leq n<20,$ we have
\begin{align*}
\left|\int_0^T\int_{\mathcal{O}_1}w_1\eta\,dxdt\right|\leq C\alpha_1(1+\|f\|_{L^2(\mathcal{O}\times(0,T))})\|w_1\|_{\mathcal{H}_1}^2,
\end{align*}
which entails that there exists a generic positive constant $C_1,$ such that
\begin{align*}
D_1^2J_1(f;v_1,v_2)(w_1,w_1)\geq \left(\mu_1-C_1\alpha_1(1+\|f\|_{L^2(\mathcal{O}\times(0,T))})\right)\|w_1\|_{\mathcal{H}_1}^2.
\end{align*}
Similarly, we can also prove that there exists a generic positive constant $C_2,$ such that
\begin{align*}
D_2^2J_2(f;v_1,v_2)(w_2,w_2)\geq \left(\mu_2-C_2\alpha_2(1+\|f\|_{L^2(\mathcal{O}\times(0,T))})\right)\|w_2\|_{\mathcal{H}_2}^2.
\end{align*}
Therefore, if $\frac{\max\{\alpha_1,\alpha_2\}}{\min\{\mu_1,\mu_2\}}$ is sufficiently small, such that $\mu_i-C_i\alpha_i(1+\|f\|_{L^2(\mathcal{O}\times(0,T))})$ is positive, we conclude that $v_1$ minimizes $J_1(f;\cdot,\hat{v}_2)$ and $v_2$ minimizes $J_2(f;\hat{v}_1,\cdot),$ which entails that the pair $(v_1,v_2)$ is in fact a Nash equilibrium.
\end{proof}
\section*{Acknowledgement}
This work was supported by the National Science Foundation of China Grant (11801427, 11871389) and the Fundamental Research Funds for the Central Universities (xzy012022008, JB210714).

\bibliographystyle{abbrv}
\bibliography{BIB}
\end{document}